\documentclass[10pt, xcolor = {usenames,dvipsnames}]{article}

\usepackage[english]{babel}
\usepackage[utf8]{inputenc} 
\usepackage{amsfonts}
\usepackage[square,sort,comma,numbers]{natbib}
\bibpunct{[}{]}{;}{a}{,}{,}
\usepackage[hidelinks]{hyperref}

\usepackage{tikz}
\usetikzlibrary{calc}

\usepackage{mathtools}
\usepackage{amsthm}
\usepackage{amssymb}
\usepackage{amsmath}
\usepackage{needspace}
\usepackage{etoolbox}
\usepackage{lipsum}
\usepackage{authblk}
\usepackage{mathrsfs}
\usepackage{bbold}
\usepackage{graphicx}
\usepackage{subcaption}
\usepackage{color,soul}

\newcounter{theo}
\newtheorem{lmm}[theo]{Lemma}
\newtheorem{thr}{Theorem}
\newtheorem{stm}[theo]{Statement}
\newtheorem{crl}[theo]{Corollary}
\newtheorem{prp}[theo]{Proposition}

\newtheorem{defn}[theo]{Definition}
\newtheorem{rmrk}[theo]{Remark}

\newtheorem{thr_old}[theo]{Theorem}

\DeclareMathOperator{\arccosh}{arccosh}

\newcommand{\R}{\mathbb{R}}
\newcommand{\Compl}{\mathbb{C}}

\newcommand{\Z}{\mathbb{Z}}
\newcommand{\T}{\mathbb{T}}
\newcommand{\D}{\mathbb{D}}

\newcommand{\eps}{\varepsilon}

\newcommand{\Dyadic}[1]{D_{#1}}
\newcommand{\Aset}{\mathrm{A}}
\newcommand{\cmpct}{\mathrm{K}}
\newcommand{\Clos}{Cl}

\newcommand{\Diff}{\mathrm{Diff}}

\newcommand{\SL}{\mathrm{PSL}}

\newcommand{\PartF}{\mathcal{Z}}

\newcommand{\FMeas}[1]{\mathscr{M}_{#1}}
\newcommand{\FMeasSL}[1]{\widetilde{\mathscr{M}}_{#1}}

\newcommand{\FMInf}[1]{\mathscr{M}_{#1}^{\infty}}

\newcommand{\Emb}[1]{\mathsf{F}^{#1}} %
\newcommand{\GProj}{\mathsf{\Phi}} %
\newcommand{\EmbTan}[1]{\widetilde{\mathsf{F}}^{#1}} %

\newcommand{\Schw}{\mathcal{S}}

\newcommand{\A}{\mathsf{P}}
\newcommand{\B}{\mathsf{Q}}

\renewcommand*{\d}{\mathop{}\!\mathrm{d}}
\newcommand{\WS}[3]{\mathcal{B}_{#1}^{\, #2, #3}} %
\newcommand{\BM}[2]{\mathcal{W}_{#1}^{\, #2}} %

\newcommand{\Cfree}{C_{0,\text{free}}}
\newcommand{\CZPlus}{C^1_{0,+}}

\newcommand{\distCR}{\mathsf{d}_{C^1(\R)}}
\newcommand{\distContR}{\mathsf{d}_{C(\R)}}

\newcommand{\TopR}{\mathcal{T}_{C^1(\R)}}

\newcommand{\eventGlobal}{\mathrm{A}}
\newcommand{\evGlob}{\mathrm{B}}

\newcommand{\Dom}{\Omega}
\newcommand{\Real}{\Re}
\newcommand{\I}{\mathcal{I}}

\newcommand{\ObsHolLoc}[1]{\mathsf{LocH\ddot{o}lObs}^{\,#1}}

\makeatletter
\def\obs#1#2#3{\def\temp@expr{\big(#1; #2, #3\big)}\mathcal{O}\obs@}
\def\obs@{%
	\@ifnextchar{_}{\obs@sub}{
	\@ifnextchar{^}{\obs@sup}{\temp@expr}}}
\def\obs@sub#1#2{_{#2}\obs@}
\def\obs@sup#1#2{^{\, #2}\obs@}
\makeatother

\makeatletter
\def\cross#1#2#3#4#5{\def\temp@expr{\big(#1; #2, #3, #4, #5\big)}\widehat{\mathcal{O}}\cross@}
\def\cross@{%
	\@ifnextchar{_}{\cross@sub}{
	\@ifnextchar{^}{\cross@sup}{\temp@expr}}}
\def\cross@sub#1#2{_{#2}\cross@}
\def\cross@sup#1#2{^{\, #2}\cross@}
\makeatother

\newcommand{\fcolor}{blue}

\title{Schwarzian Field Theory at High Temperatures}
\author{Ilya Losev
\footnote{Mathematical Institute, University of Oxford, Andrew Wiles Building, Radcliffe Observatory Quarter, Woodstock Road, Oxford, OX2 6GG, UK.
E-mail: \url{ilya.losev@maths.ox.ac.uk}.}
}

\date{July 31, 2026}

\begin{document}
\maketitle

\begin{abstract}
In this work we study the high-temperature limit of the Schwarzian Field Theory probability measure.
We show that on large scales this limit concentrates on jump processes, while its small-scale behaviour is governed by a process which we call Schwarzian Field Theory on the real line.
This Schwarzian Theory on the real line can be viewed as the infinite-volume version of the Schwarzian Field Theory.
In addition to showing this local convergence, we also provide a systematic treatment of the Schwarzian Theory on the real line. 
This includes the calculation of the correlation functions and a corresponding uniqueness theorem.
\end{abstract}

\maketitle

\begingroup
\renewcommand\thefootnote{}
\footnotetext{
\textbf{AI disclosure:}
In this work, AI tools were used solely for proofreading and linguistic refinement.
}
\addtocounter{footnote}{-1}
\endgroup

\section{Introduction and main results}

\subsection{Introduction}
Schwarzian Field Theory is a quantum field theory that has recently attracted a lot of attention in physics in the context of AdS/CFT correspondence and black holes.
It is predicted to arise as a holographic dual of the Jackiw-Teitelboim (JT) gravity in the disk \citep{SaadShenkerStanford2019, NearlyAdS, JT_Wilson_Line, Ferrari_Lattice_JT}.
The Schwarzian Field Theory also emerges in the low-energy limit of the Sachdev–Ye–Kitaev (SYK) random matrix model (e.g. see \citep{MaldacenaStanford, KitaevJosephine}), and has connections to Liouville CFT \citep{ConformalBootstrap}, infinite-dimensional symplectic geometry \citep{AlekseevBosonization, AlekseevShatashvili, StanfordWittenFermionicLocalization}, representation theory of the Virasoro algebra \citep{AlekseevShatashvili2}, 2D Yang-Mills \citep{Schwarzian_Wilson_Line}, and random polygons \citep{chekhov2024, chekhov_budd}.

In \citep*{BLW}, we defined a finite measure on
$\mathrm{Diff}^{1}(\mathbb{T})/\mathrm{PSL}(2, \mathbb{R})$ which corresponds
to the Schwarzian Field Theory, and also calculated its partition function (i.e. total mass).
We call this measure \emph{Schwarzian Measure on the circle}.
A convincing outline of the construction of the Schwarzian measure in terms of a Brownian bridge was given by \citep{BelokurovShavgulidzeExactSolutionSchwarz, BelokurovShavgulidzeCorrelationFunctionsSchwarz}. In \citep{BLW,LosevCorr}, we charcterized the Schwarzian measure uniquely in terms of a change of variable formula and in terms of its correlation functions. This measure agrees with a rigorous version of the Brownian Bridge construction.

\medskip

One of the questions that naturally arises after one constructs a measure corresponding to a quantum field theory, is to describe its behaviour at both extreme temperatures.
When it comes to the Schwarzian Field Theory, these questions also have a physical significance.
For example, in the appropriate large-boundary limit of JT gravity, it is predicted that the Schwarzian Field Theory emerges with a temperature that depends on the relationship between parameters of the JT gravity model (these parameters are either boundary length and dilaton boundary condition in the notation of \citep{MaldacenaStanford, KitaevJosephine} or boundary length and cosmological constant in \citep{Ferrari_Lattice_JT}).
Therefore, one naturally expects to see both low-temperature and high-temperature (or even infinite-temperature) Schwarzian Field Theory arise from the JT gravity model in certain regimes.

Furthermore, we conjecture that the high-temperature limit of Schwarzian Field Theory should naturally emerge in a broader class of models.
Indeed, the Schwarzian Field Theory is expected to arise universally in central limit theorems in various settings with an emergent conformal symmetry.
In these situations, the Schwarzian Field Theory action appears as a variation of the underlying model's action.
In other words, the Schwarzian action should emerge with an infinitesimally small parameter in front of it (see, e.g.,~\citep{KitaevJosephine, Yilin_schwarzian_action}). 
Consequently, it is specifically the infinite-temperature limit of the Schwarzian Field Theory that should govern these central limit behaviours.

Alternatively, we can also view the high-temperature limit of Schwarzian Field Theory as its long-time limit.
Indeed, we can absorb the temperature parameter into the time parameter by rescaling the argument of the field.
This perspective puts the question of studying the high-temperature/long-time limit of the Schwarzian Theory into a broader program in the area of constructive field theory, with the goal of understanding probabilistic QFTs in their infinite-volume limits, see \citep{GubHairerOhZine2025, DuchGubinelliRinaldi2025, GubinelliHofmanova, DuchP2025, BarashkovGubinelli2021, ShenZhuZhu} for some of the recent progress.

\medskip

In this work we study the high-temperature limit of the Schwarzian Measure on the circle, as defined in \citep*{BLW}.
We show that on small scales this limit is described by a process that we call \emph{Schwarzian Measure on the real line}.
This process is essentially an infinite-time version of the Schwarzian Field Theory.
In addition to defining this process and proving the local convergence, we also calculate its correlation functions of cross-ratio observables whenever the corresponding Wilson lines are non-intersecting, and prove that these correlation functions determine this process uniquely.
We also show that on large scales the high-temperature Schwarzian Field Theory concentrates on jump processes:
with high probability, every representative has at most three macroscopic jumps, and the precise number and placement of these jumps depend on the choice of representative in the $\SL(2, \R)$-orbit.

This paper complements \citep{LosevLDP}, where we investigated the low-temperature regime of the Schwarzian Measure on the circle, proving the large deviation principle with the good rate function given by the action of the theory.

\subsubsection{Probabilistic setting}

\emph{Schwarzian Measure on the circle} is the measure corresponding to the Schwarzian Field Theory. 
It is supported on the topological space $\Diff^1(\T)/\SL(2, \R)$ and is formally given by (see \citep[(1.1)]{StanfordWittenFermionicLocalization})
\begin{equation}\label{eq:1}
\d\FMeas{\sigma^2}\big(\phi\big) = 
\exp\left\{- \frac{1}{\sigma^2} \, \I(\phi)\right\}
\frac{\prod_{\tau \in \T}\frac{\d\phi(\tau)}{\phi'(\tau)}}{\SL(2, \R)} ,
\end{equation} 
where the action $\I(\phi)$ is
\begin{equation}
\I(\phi)
=
-\int_{\T} 
\left[\Schw(\phi, \tau)+2\pi^2\phi'^{\, 2}(\tau)\right] \d\tau
=
-\int_{\T} \Schw\big(e^{2\pi i \phi}, \tau \big)\d\tau,
\end{equation}
and $\Schw_{\phi}(\tau)$ is the Schwarzian derivative of $\phi$,
\begin{equation}
\Schw(\phi, \tau) =
\Schw_{\phi}(\tau)
=\left(\frac{\phi''(\tau)}{\phi'(\tau)}\right)'- \frac{1}{2} \left(\frac{\phi''(\tau)}{\phi'(\tau)}\right)^2.
\end{equation}
Here, $\T = [0, 1]/\{0\sim 1\}$ is the unit circle, $\Diff^1(\T)$ is the space of $C^1$ orientation preserving diffeomorphisms of $\T$, and $\SL(2, \R)$ is the group of M\"{o}bius transformations of the unit disk (i.e. conformal isomorphisms of the unit disk) restricted to the boundary which is identified with $\T$.
The group $\SL(2, \R)$ acts on $\Diff^1(\T)$ by post-compositions. 
Following \citep{StanfordWittenFermionicLocalization} we call it a right action, since in \citep{StanfordWittenFermionicLocalization} it is interpreted as an action on the inverse elements.
We denote the quotient of $\Diff^1(\T)$ by this action of $\SL(2, \R)$ by $\Diff^1(\T)/\SL(2, \R)$.
Heuristically, the formal density~\eqref{eq:1} only depends on the orbit of this action and the quotient by $\SL(2,\R)$, therefore, makes sense.
We recall the rigorous construction of the Schwarzian Measure on the circle carried out in \citep*{BLW} in Section~\ref{sect_meas_construct}.

The partition function of the Schwarzian Field Theory (i.e. total mass of the corresponding measure) was originally formally derived in \citep{StanfordWittenFermionicLocalization} and later rigorously obtained in \citep*{BLW}, and is given by
\begin{equation}\label{eqPartF_intro}
\PartF\!\left(\sigma^2\right) = 
\mathcal{Z}_{\sigma^2}=
\left(\frac{2\pi}{\sigma^2}\right)^{3/2} \exp\left(\frac{2\pi^2}{\sigma^2}\right)
    =\int_0^{\infty} e^{-{\sigma^2k^2}/{2}} \sinh(2\pi k) \, 2 k \d k.
\end{equation}

The parameter $\sigma^2$ in \eqref{eq:1} corresponds to the \emph{temperature} of the system. 
In this paper we study the $\sigma^2 \to \infty$ limit of the normalised Schwarzian Measures $\mathcal{Z}_{\sigma^2}^{-1} \cdot \d\FMeas{\sigma^2}(\phi)$.

In Theorem~\ref{thrLocalLimit} we determine the local structure of this limit.
We prove that after fixing the gauge (choosing a representative in each conjugacy class) and appropriate rescaling of $\phi$ these normalised Schwarzian Measures on the circle converge to a random process, which can be defined by
\begin{equation}
t \mapsto \int_{0}^t e^{\xi(s)}\d s,
\end{equation}
where $\xi$ is the two-sided Brownian Motion on the real line (see Definition~\ref{defnSchwLine}).
We call this stochastic process \emph{Schwarzian Measure on the real line}.
In Section~\ref{sectSchwLineIntro} we provide a systematic treatment of this process, calculating its correlation functions  for non-interlaced observables, proving a uniqueness theorem, and developing H\"{o}lder property theory for it and its observables.
Even though results from Section~\ref{sectSchwLineIntro} are of independent interest, here we also heavily rely on them to prove the convergence stated in Theorem~\ref{thrLocalLimit}.

In addition, we also show that without rescaling the $\sigma\to \infty$  limit of $\mathcal{Z}_{\sigma^2}^{-1} \cdot \d\FMeas{\sigma^2}(\phi)$ concentrates on jump processes.
Moreover, we show that these processes have no more than $3$ jumps, and that the exact number (and structure) of jumps heavily depends on the choice of representatives in conjugacy classes.

\subsection{Main results}
In this section we state the main results concerning the high-temperature limit of the Schwarzian Measure on the circle.

\subsubsection{Local structure}
First, we describe the local structure of diffeomorphisms which are sampled according to the high-temperature Schwarzian Field Theory.

In order to formulate the result, we need to fix the gauge (choose a representative in each conjugacy class) and describe the rescaling. 
Let $\GProj (\phi) \coloneqq \GProj_\phi$ be the representative of the equivalence class of $\phi\in \Diff^1(\T)/\SL(2, \R)$, with
\begin{equation}
\GProj_\phi(0) = 0,
\qquad \GProj'_\phi(0) = 1,
\qquad \GProj'_\phi(1/2) = 1/2.
\end{equation}
Further, we embed $\Diff^1(\T)/\SL(2, \R)$ into $C^1(\R) $ by rescaling both the argument and the value by the factor of $\beta$, and using the periodic lift,
\begin{align}
\Emb{\beta}:\, \Diff^1(\T)/\SL(2, \R) &\to C^1(\R)  \\
\phi &\mapsto \Emb{\beta}(\phi)(t) \coloneqq \Emb{\beta}_\phi (t)
\coloneqq \beta \Big(\lfloor \tfrac{t}{\beta} \rfloor + \GProj_\phi\big(\tfrac{t}{\beta}\, \mathrm{mod}\, 1\big)\Big).
\label{eqDefFMap}
\end{align}

We endow $C^1(\R)$ with topology $\TopR$,
which is induced by the metric 
\begin{equation}\label{eqDefMetricC1}
\distCR (f, g) = \sum_{n=1}^{\infty} 2^{-n} \, \min\left\{1, \, \|f-g\|_{C^1 [-n, n]} \right\},
\end{equation}
where
\begin{equation}
\|f-g\|_{C^1 [-n, n]}
=
\max \left\{
\sup_{x\in [-n, n]} |f(x) - g(x)|, \,
\sup_{x\in [-n, n]} |f'(x) - g'(x)|
\right\}.
\end{equation}
In other words, topology $\TopR$ encodes uniform $C^1$ convergence on compact sets.
To be more precise, $f_n\to g $ in $\TopR$ if and only if for every compact $K\subset \R$, we have that both $f_n \to g$ and $f_n'\to g'$ uniformly on $K$.
Moreover, as for functional spaces on finite intervals, the Borel $\sigma$-algebra of $\TopR$ coincides with the $\sigma$-algebra generated by cylinder sets of functions and their derivatives. 
Also note that $C^1(\R)$ with topology $\TopR$ is a Polish space.

\begin{rmrk}
Similar generalisation of metrics (and topologies) on functional spaces on finite intervals to metrics on functional spaces on infinite intervals is well-known in the context of Skorokhod theory of c\`{a}dl\`{a}g functions $D[0, \infty)$, see~\citep[Section~16]{Billingsley}.
Such topologies, which encode convergence on compact sets, are natural for stochastic processes.
\end{rmrk}

Let $\d\Emb{\beta}_{\sharp}\FMeas{\sigma^2}$ be the push-forward of the Schwarzian Measure on the circle (see Section~\ref{sect_meas_construct} for definitions and main properties) under \eqref{eqDefFMap}. 
Since \eqref{eqDefFMap} is continuous, we get that $\d\Emb{\beta}_{\sharp}\FMeas{\sigma^2}$ is a Borel finite measure on~$\big(C^1(\R),  \TopR\big)$.
Recall that its total mass $\PartF_{\sigma^2}$ is given by \eqref{eqPartF_intro}.
Our goal is to show that if $\sigma, \beta\to\infty$ with $\sigma^2/\beta \to \varkappa^2$ for some $\varkappa>0$, then the normalised push-forwards $\PartF_{\sigma^2}^{-1}\cdot \d\Emb{\beta}_{\sharp}\FMeas{\sigma^2}$ converge to a probability measure, which we call Schwarzian Measure on the real line.

\begin{defn}\label{defnSchwLine}
Let $\varkappa>0$.
\emph{Schwarzian Measure on the real line} is the Borel probability measure $\d\FMInf{\varkappa^2}$ on~$\big(C^1(\R),  \TopR\big)$ given by
\begin{equation}\label{eqDefLineSchw}
\d\FMInf{\varkappa^2}(f)
 \coloneqq 
\d \BM{\varkappa^2}{\infty}(\xi), 
\qquad \text{with } f(t) = \B_{\xi}(t),
\end{equation}
where 
\begin{equation}\label{eqDefQ}
\B(\xi)(t):= \B_{\xi}(t):= \int_{0}^{t} e^{\xi(s)}\d s,
\end{equation}
and $\d\BM{\varkappa^2}{\infty}(\xi)$ is a probability measure on $C(\R)$, corresponding to the two-sided Brownian Motion on $\R$ with $\xi(0)=0$ and variance $\varkappa^2>0$.
\end{defn}

\begin{rmrk}
Here we endow $C(\R)$ with the topology of locally uniform convergence, generated by the metric
\begin{equation}
\distContR (f, g) = \sum_{n=1}^{\infty} 2^{-n} \, \min\left\{1, \,\sup_{x\in [-n, n]} |f(x) - g(x)|\right\}.
\end{equation}
We have that the Borel $\sigma$-algebra generated by this topology coincides with the $\sigma$-algebra generated by cylinder sets of function values (see~\citep[Section~16]{Billingsley}). 
In particular, two-sided Brownian Motion $\d\BM{\varkappa^2}{\infty}$ is defined as a probability measure on the Borel $\sigma$-algebra of $C(\R)$.
Moreover, with our choice of topologies we have that the map $\B$, given by \eqref{eqDefQ}, is continuous as a map from $C(\R)$ to $C^1(\R)$.
\end{rmrk}
Formally,
\begin{equation}
\d\FMInf{\varkappa^2}(f)
=
\exp\left\{ \frac{1}{\varkappa^2} \, 
\int_{\R} \Schw\big(f, \tau \big)
\right\}
\prod_{\tau \in \R}\frac{\d f(\tau)}{f'(\tau)}.
\end{equation}
Thus, we can view this measure as the infinite-time version of the Schwarzian Field Theory.
We expect that the Schwarzian Measure on the real line defined above should arise as a limit in various physical models, including the JT gravity. 
We further discuss the Schwarzian Measure on the real line $\d\FMInf{\varkappa^2}$ and its properties in Section~\ref{sectSchwLineIntro}.

The main result of this section is the following theorem.

\begin{thr}\label{thrLocalLimit}
If $\sigma, \beta \to\infty$ with $\sigma^2/\beta \to \varkappa^2$, then the normalised push-forwards $\PartF_{\sigma^2}^{-1}\cdot \d\Emb{\beta}_{\sharp}\FMeas{\sigma^2}$ converge weakly to $\d\FMInf{\varkappa^2}$ as Borel measures on the topological space $(C^1(\R), \TopR)$.
\end{thr}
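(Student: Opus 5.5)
We will use the Brownian-bridge representation of the Schwarzian measure on the circle from \citep{BLW}, recalled in Section~\ref{sect_meas_construct}. Up to normalisation, a representative of the class of $\phi$ is written as $\phi(t)=\int_0^t e^{\xi(s)}\d s\big/\int_0^1 e^{\xi(s)}\d s$. Here $\xi$ is a Brownian bridge on $[0,1]$ with variance $\sigma^2$, and the measure is reweighted by an explicit density, essentially $\big(\int_0^1 e^{\xi}\big)^{-1}\exp\big(2\pi^2\sigma^{-2}\int_0^1 \phi'^2\big)$-type corrections, quotiented by $\SL(2,\R)$. The gauge map $\GProj$ then acts on this representative by an explicit Möbius transformation, determined by the three conditions $\GProj_\phi(0)=0$, $\GProj_\phi'(0)=1$, $\GProj_\phi'(1/2)=1/2$.

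After the rescaling $t\mapsto \beta t$ in \eqref{eqDefFMap}, the process $s\mapsto \xi(s/\beta)$ on $[-\beta/2,\beta/2]$ is a Brownian bridge of variance $\sigma^2/\beta\to\varkappa^2$ over a time interval of length $\beta\to\infty$. Locally it therefore converges to a two-sided Brownian motion $\BM{\varkappa^2}{\infty}$. The proof has four steps.

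First, we rewrite $\Emb{\beta}_\phi$ on each compact $[-n,n]$ as $\B$ applied to a process $\eta_\beta$. This process equals the rescaled bridge plus a random correction coming from the gauge-fixing Möbius map. Using the conditions $\GProj'(0)=1$ and $\eta_\beta(0)=0$, the derivative $(\Emb{\beta}_\phi)'(t)=e^{\eta_\beta(t)}$ gives $\eta_\beta=\log(\Emb{\beta}_\phi)'$.

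Second, we show that the Möbius correction has a local effect tending to zero. On $[-n,n]$, the map $\GProj_\phi\circ$(normalised $\phi$) differs from an affine map by a term governed by its own Schwarzian. On the original scale this term is of order $n/\beta$ times the Möbius distortion parameters, so its contribution to $\log\phi'$ on $[-n,n]$ vanishes in probability. The condition $\GProj'(1/2)=1/2$ pins the distortion to be bounded with high probability. This in turn requires that, under the reweighted measure, the global quantities ($\int_0^1 e^\xi$, the location of macroscopic mass) are tight on the appropriate scale.

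Third, we show that the reweighting density is asymptotically independent of the local behaviour on $[-n/\beta,n/\beta]$. We will condition on $\xi$ at times $\pm n/\beta$ and use the Markov property of the bridge. The density depends on the local window only through $\int_{-n/\beta}^{n/\beta}e^{\xi}$, which is negligible compared with the total integral with high probability. The conditional law of the window, given the outside, is then a Brownian bridge close to Brownian motion in total variation, because its endpoints are tight.

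Fourth, we conclude that $\eta_\beta$ converges to $\BM{\varkappa^2}{\infty}$ in distribution in $(C(\R),\distContR)$. Since $\B$ is continuous into $(C^1(\R),\TopR)$, the continuous mapping theorem gives Theorem~\ref{thrLocalLimit}.

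The main obstacle is step two, possibly together with step three: controlling the gauge-fixing Möbius map uniformly under the heavily reweighted measure. At high temperature the measure concentrates on jump-like profiles, so the normalising constants and the Möbius parameters could in principle be degenerate. To handle this, I would first derive explicit finite-dimensional density formulas for $(\xi(0),\xi(1/2))$ and the relevant exponential integrals under $\FMeas{\sigma^2}$ from \citep{BLW,LosevCorr}. From these I would prove tightness of $\log\phi'(1/2)-\log\phi'(0)$ and of the local exponential functionals, and only then run the local comparison with Brownian motion.
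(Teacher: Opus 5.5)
Your route is genuinely different from the paper's. However, its central step (Step~2) rests on a false claim and leaves the real difficulty open.

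\emph{The distortion is not bounded.} Write $\GProj_\phi=\psi\circ\phi$ with $\phi=\Theta+\A_\xi$ as in \eqref{defMeasureMu}. The condition $\GProj'_\phi(0)=1$ alone forces $\psi'(\Theta)=1/\A_\xi'(0)=\int_0^1e^{\xi}$. For the bridge of variance $\sigma^2$ in your Step~1, this is of size $e^{\max\xi}$ with $\max\xi$ of order $\sigma$. So no condition at $1/2$ can make the M\"obius distortion bounded.

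\emph{What actually has to be controlled.} Large distortion is not fatal in itself. In the charts $g=\tan(\pi\A_\xi)$ and $h=\tan(\pi\GProj_\phi)$ one has $h=M\circ g$ with $M(y)=\Lambda y/(1+cy)$ and $\Lambda=\int_0^1e^{\xi}$. Hence
\[
\log h'(u)-\log h'(0)=\log g'(u)-\log g'(0)-2\log\bigl|1+c\,g(u)\bigr|,
\qquad
g(t/\beta)\approx\frac{\pi}{\Lambda\beta}\int_0^t e^{\xi(s/\beta)}\d s ,
\]
where $\log g'(t/\beta)-\log g'(0)=\xi(t/\beta)+o(1)$ on the window. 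Moreover $c/\Lambda=\cot\bigl(\pi\GProj_\phi(u_*)\bigr)$, where $u_*$ is the point with $\A_\xi(u_*)=1/2$. So the gauge correction on $[-n,n]$ vanishes exactly when $\beta\,\bigl|\tan\bigl(\pi\GProj_\phi(u_*)\bigr)\bigr|\to\infty$ in probability. Equivalently, in the rescaled chart $\frac{\beta}{\pi}\tan\bigl(\pi\GProj_\phi(\cdot/\beta)\bigr)$, the pole of the gauge-fixing fractional linear map must escape to infinity.

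This is a statement about where the macroscopic mass of $\GProj_\phi$ sits relative to $0$. By Theorems~\ref{thrGlobalConvJumpBound} and~\ref{thrGlobalConvJumpPossible}, representatives in this regime are jump-like, so this is exactly the hard global input, and your plan does not supply it. The quantity you propose to control, $\log\phi'(1/2)-\log\phi'(0)$, does not help. It is identically $-\log 2$ in the gauge $\GProj$, and in the bridge chart it equals $\xi(1/2)$, which has variance $\sigma^2/4$.

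\emph{Two smaller points.} First, $\FMeasSL{\sigma^2}$ is an infinite $\SL(2,\R)$-invariant measure. The gauge-fixed law becomes a reweighted bridge only after you specify a slice (Faddeev--Popov) factor, and until then your Step~3 claims about what the weight depends on cannot be checked. Second, conditioning on $\xi(\pm n/\beta)$ pins the window itself. To get total-variation closeness to Brownian motion you must condition at $\pm L/\beta$ with $L\gg n$.

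\emph{How the paper avoids this.} It never fixes a gauge on the bridge. It passes to $\EmbTan{\beta}$, for which $\obs{\phi}{s}{t}=\beta\,\obs{\EmbTan{\beta}_{\phi}}{s\beta}{t\beta}_0$, and works only with $\SL(2,\R)$-invariant observables. The ingredients are:
\begin{enumerate}
\item convergence of the exact non-interlaced correlation functions (Proposition~\ref{prpCorrConv});
\item tightness, from exponential deviation bounds combined with the observable characterisation of H\"older continuity of $\log f'$ (Proposition~\ref{prpTightnessLoc});
\item uniqueness (Theorem~\ref{thrUniqLineSchw});
\item comparison of $\EmbTan{\beta}$ with $\Emb{\beta}$ on compacts (Lemma~\ref{lmmProofThrLocalConv1}).
\end{enumerate}
The ``pole escapes to infinity'' fact your Step~2 needs appears there only for the limit law, as $f(\pm T)\to\pm\infty$. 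It is derived from the invariant combination $\frac{1}{f(T)}-\frac{1}{f(-T)}$ in \eqref{eqCross3P2}.
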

We discuss the proof strategy in Section~\ref{sectProofStrat} and provide the proof of this theorem in Section~\ref{sectThrLocalProof}.

\subsubsection{Global structure}
We can also show that when the temperature $\sigma^2$ is large, the Schwarzian Measure on the circle $\d\FMeas{\sigma^2}$ concentrates on such classes of diffeomorphisms $[\phi] \in \Diff^1(\T)/\SL(2, \R)$ that all their representatives are jump processes with at most $3$ jumps.
\begin{thr}\label{thrGlobalConvJumpBound}
Let $0=t_1<t_2<\ldots<t_N< t_{N+1} = 1$ be distinct points on the circle.
For any $\eps>0$ and any integer $N>0$ there exists a Borel measurable set $\eventGlobal_{N, \eps} \subset \Diff^1(\T)/\SL(2, \R)$ such that 
\begin{enumerate}
\item We have
\begin{equation}
\lim_{\sigma\to \infty} \mathcal{Z}^{-1}_{\sigma^2} \cdot \FMeas{\sigma^2} \left(
\eventGlobal_{N, \eps}
\right) = 1.
\end{equation}

\item
For any $\phi\in \Diff^1(\T)$ which is a representative of some conjugacy class $[\phi]\in \eventGlobal_{N, \eps}$ we have that
\begin{equation}
\left|
\left\{
j\in\{1, 2, \ldots N \}: \,
\phi(t_{j+1}) - \phi(t_{j}) > \eps
\right\}
\right| \leq 3.
\end{equation}
\end{enumerate}

\end{thr}

Moreover, the exact number of jumps depends on the representative, and with high probability can be any number between $1$ and $3$.
Below we show that for any $k\in\{1, 2, 3\}$ with large probability (in $\sigma\to \infty$ regime) we can find a representative that has $k$ jumps of size around $1/k$.
We also ensure that these jumps are separated by a given $\eta$.
The introduction of this $\eta$ ensures that the jumps, indeed, happen at different points and do not merge in the limit.

\begin{thr}\label{thrGlobalConvJumpPossible}
For any $k \in \{1, 2, 3\}$ and $\eta>0$ there exists $\rho(\eta)>0$ such that
for any $\eps>0$ we can find
an event $\evGlob_{k, \eta, \eps} \subset \Diff^1(\T)/\SL(2, \R)$ such that the following holds.
\begin{enumerate}
\item We have that
\begin{equation}
\liminf_{\sigma\to \infty} \mathcal{Z}^{-1}_{\sigma^2} \cdot \FMeas{\sigma^2} \left(
\evGlob_{k, \eta, \eps}
\right) 
> 1-\rho(\eta),
\end{equation}
and $\lim_{\eta\to 0}\rho(\eta) = 0$.

\item
For any conjugacy class $[\phi]\in \evGlob_{k, \eta,  \eps}$ there exist a representative $\phi\in \Diff^1(\T)$ and
$\{t_j\}_{j=1}^k \subset \T$ with
\begin{equation}
\forall i \neq j \in \{1, \ldots, k\}: \qquad |t_j-t_i| > \eta,
\end{equation}
such that 
\begin{equation}
\forall j\in \{1, \ldots, k\}: \qquad
\phi(t_{j}+\eps) - \phi(t_{j}-\eps) > \frac{1}{k} - \eps.
\end{equation}
\end{enumerate}
\end{thr}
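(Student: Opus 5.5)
The plan is to work with the Brownian-bridge description of $\FMeas{\sigma^2}$ recalled in Section~\ref{sect_meas_construct}. A representative is written as $\phi(t)=\int_0^t e^{\sigma\xi(s)}\d s\big/\int_0^1 e^{\sigma\xi(s)}\d s$, with $\xi$ a unit-variance Brownian bridge reweighted by an explicit density. I would first check that, for fixed $\eta$, this density is tight enough that events of probability close to $1$ under the bridge remain of probability close to $1$ under the normalised measure $\PartF_{\sigma^2}^{-1}\FMeas{\sigma^2}$; this is the same input already needed for Theorem~\ref{thrGlobalConvJumpBound}.

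\textbf{Step 1: a scale separation event.} Since $\sigma\to\infty$, the increments of $\phi$ are governed by the peaks of $\xi$. Define an event $E_\eta$ on the path $\xi$ (with $\eps$ fixed) requiring the following.
\begin{itemize}
\item[(a)] There are three points $s_1,s_2,s_3$ on $\T$, pairwise more than $\eta$ apart, that are maximisers of $\xi$ on three complementary arcs $J_1,J_2,J_3$ of $\T$. Each $J_i$ has length at least $\eta$ and contains the $\eps$-neighbourhood of its $s_i$.
\item[(b)] On each $J_i$, the supremum of $\xi$ outside $(s_i-\eps,s_i+\eps)$ lies strictly below $\xi(s_i)$ by some $\delta(\eta)>0$.
\end{itemize}
For Brownian paths, $\mathbb{P}(E_\eta)\ge 1-\rho(\eta)$ for a suitable $\delta(\eta)$, with $\rho(\eta)\to 0$ as $\eta\to0$; this should follow by splitting $\T$ into three fixed arcs and using the a.s. uniqueness of the maximum on each. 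On $E_\eta$, Laplace asymptotics show that the fraction of $\int_{J_i}e^{\sigma\xi}$ carried by $J_i\setminus(s_i-\eps,s_i+\eps)$ is at most $e^{-\sigma\delta}$ times a polynomial factor, which tends to $0$. Hence, for every representative, the image of each $J_i$ under $\phi$ is, up to an error $o(1)$ as $\sigma\to\infty$, an arc traversed almost entirely while $t$ stays in $(s_i-\eps,s_i+\eps)$. Between consecutive $J_i$'s, $\phi$ sits on arcs $A_1,A_2,A_3\subset\T$ of vanishing length.

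\textbf{Step 2: choosing the representative.} The group $\SL(2,\R)$ acts triply transitively on $\T$, so there is a M\"obius $g$ with $g(A_1)\approx 0$, $g(A_2)\approx 1/3$, $g(A_3)\approx 2/3$. Then $g\circ\phi$ increases by about $1/3$ across each window $(s_i-\eps,s_i+\eps)$, which gives the case $k=3$ with $t_j=s_j$. For $k=2$, send $A_1$ and $A_2$ to nearby points and $A_3$ to the antipode; the jump across one of the windows is then nearly zero, and the remaining two jumps are about $1/2$ each. For $k=1$, collapse two of the arcs to nearly a single point in the same way.

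\textbf{Main obstacle.} The difficulty is controlling the distortion of $g$. A M\"obius map can blow up the tiny arcs $A_i$, and a priori the points $A_1,A_2,A_3$ could nearly collide on the circle. If they collide, $g$ has enormous derivative near them, and the small leftover pieces of $\phi$ could become macroscopic after composition. I would handle this in two ways.
\begin{itemize}
\item Include in $E_\eta$ the requirement that the masses $\int_{J_i}e^{\sigma\xi}$ are comparable up to a factor $e^{\sigma\delta'}$. Since $g$ is only determined by cross-ratios, this mass comparability is the information that fixes the positions of the arcs $A_i$ relative to each other and should be turned into a bound on the separation of the $A_i$.
\item Use that $|g'|$ is bounded by a constant $C(\eta)$ at distance $\ge c(\eta)$ from the repelling fixed point of $g$. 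Near that point, the exponentially small lengths $e^{-\sigma\delta}$ of the $A_i$ beat any fixed distortion once $\sigma$ is large; if the distortion of $g$ grows with $\sigma$, this requires $\delta(\eta)$ to dominate its growth rate.
\end{itemize}
I expect the quantitative cross-ratio control linking the peak heights of $\sigma\xi$ to the distortion of $g$ to be the technical heart of the proof. I would first try to obtain it directly from the explicit form of the gauge $\GProj$ and the local limit in Theorem~\ref{thrLocalLimit}.
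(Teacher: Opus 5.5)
Your Step~2 has a real gap, and better distortion estimates cannot close it. The three-peak picture is defeated by an $\SL(2,\R)$-invariant quantity, so no choice of M\"obius map repairs it.

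\textbf{The obstruction.} Stay in your own Brownian picture. Since the arcs $J_i$ cover $\T$, one of your peaks, say $s_1$, is the global maximiser, at height $M$. Take $s_1,s_2,s_3$ in cyclic order and put $x_1=s_1+\eps$, $x_2=s_2-\eps$, $x_3=s_3+\eps$, $x_4=s_1-\eps$. Let $\alpha_{ij}$ be the increment of a representative over $(x_i,x_j)$. Then
\[
1-\widehat{\mathcal O}(\phi;x_1,x_2,x_3,x_4)=\frac{\sin(\pi\alpha_{23})\,\sin(\pi\alpha_{41})}{\sin\big(\pi(\alpha_{12}+\alpha_{23})\big)\,\sin\big(\pi(\alpha_{23}+\alpha_{34})\big)}\geq\sin(\pi\alpha_{23})\,\sin(\pi\alpha_{41}),
\]
and the left-hand side is the same for every representative.
\begin{itemize}
\item In your normalised representative $\alpha_{41}\to1$, so the left-hand side is at most of order $\alpha_{23}/\alpha_{12}+\alpha_{23}/\alpha_{34}$.
\item The shoulders $(x_1,x_2)\supset[s_1+\eps,s_1+2\eps]$ and $(x_3,x_4)\supset[s_1-2\eps,s_1-\eps]$ carry unnormalised mass at least $\eps e^{\sigma(M-D_\eps)}$, where $D_\eps\to0$ by continuity at $s_1$.
\item The arc $(x_2,x_3)$ stays at distance at least $\eta/2$ from $s_1$. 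It carries mass at most $e^{\sigma(M-\Delta_\eta)}$, with $\Delta_\eta>0$ almost surely and independent of $\eps$ and of how you choose $s_2,s_3$.
\end{itemize}
On $\{D_\eps<\Delta_\eta\}$, whose probability tends to one as $\eps\to0$, this gives $1-\widehat{\mathcal O}\to0$ as $\sigma\to\infty$.

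Now suppose some representative had increments $>1/3-\eps$ over both $(s_1-\eps,s_1+\eps)=(x_4,x_1)$ and $(s_2-\eps,s_2+\eps)\subset(x_2,x_3)$. Then $\alpha_{23},\alpha_{41}\in(1/3-\eps,2/3+\eps)$, so $1-\widehat{\mathcal O}\geq\sin^2(\pi/3)-O(\eps)$, a contradiction.

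So the collision of the arcs $A_i$ you flagged is the typical situation, not an exceptional one. The dominant peak's tail just outside its $\eps$-window swamps the secondary peaks. The mass-comparability condition you propose to add has probability tending to $0$ as $\eps\to0$, which is incompatible with $\rho$ depending only on $\eta$. Heuristically, in a three-jump representative only one jump sits at a local maximum of $\xi$; the other two sit where $\xi$ recrosses that maximum's level, so they are not arc-maximisers at all.

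\textbf{The transfer in Step~1 is also unavailable.} The bridge reweighted by $\exp\{\frac{2\pi^2}{\sigma^2}\int_0^1\phi'^2\}$ is $\FMeasSL{\sigma^2}$, which is an infinite $\SL(2,\R)$-invariant measure, not a finite reweighting. A condition on $\xi=\log\phi'$ of one particular representative is not a function on $\Diff^1(\T)/\SL(2,\R)$. What the paper actually uses, including for Theorem~\ref{thrGlobalConvJumpBound}, is Lemma~\ref{lmmGlobalUniqFuncEqual}. It transfers only bounded continuous functions of the invariant observables, via bridges on $[0,T]$ weighted by the signed function $h_T$.

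\textbf{The missing idea.} Let the gauge, not the peak structure, locate the jumps.
\begin{itemize}
\item Normalise $\phi(0)=0$, $\phi(1/3)=1/3$, $\phi(2/3)=2/3$, so each third carries increment exactly $1/3$.
\item Theorem~\ref{thrGlobalConvJumpBound}, on a mesh finer than $\eps$, then leaves exactly one jump of size $>1/3-\eps$ in each third.
\item Apply the second part of Lemma~\ref{lmmGlobalLimitObs} to cross-ratios whose last two arguments lie within $\eta$ of each other at $0$, $1/3$, $2/3$. This shows that with probability $>1-\rho(\eta)$ the jumps stay about $2\eta/3$ away from the gauge points, hence are $\eta$-separated.
\item The cases $k=2,1$ follow by post-composing with a fixed M\"obius map that depends only on $\eps$. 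This is applied on the $k=3$ event with $\eps$ replaced by $\eps/M(\eps)$, so $\rho$ stays independent of $\eps$.
\end{itemize}
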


This theorem also implies that the global limit of $\lim_{\sigma\to \infty} \mathcal{Z}^{-1}_{\sigma^2} \cdot \FMeas{\sigma^2} $ crucially depends on the choice of the representatives.

We explain the main ideas behind proofs of both Theorem~\ref{thrGlobalConvJumpBound} and Theorem~\ref{thrGlobalConvJumpPossible} in Section~\ref{sectProofStrat}, and prove them in Section~\ref{sectThrGlobProof}.

\subsection{Schwarzian Measure on the real line}
\label{sectSchwLineIntro}

In this section we state the main properties of Schwarzian Measure on the real line $\d\FMInf{\varkappa^2}$, which was introduced in Definition~\ref{defnSchwLine} as the push-forward of the Brownian Motion measure under \eqref{eqDefQ}.

Note that push-forwards of the Brownian Bridge measures under \eqref{eqDefQ} have already appeared in \citep{LosevCorr}, where they were used to calculate correlation functions of the Schwarzian Measure, as defined in~\citep*{BLW}.
Here we will call them Schwarzian Measures on intervals.
We recall the relevant facts in Section~\ref{sectSchwInterval}.
However, probabilistic Schwarzian Measure on the real line has not appeared in previous work.
Below we give a systematic treatment of Schwarzian Measure on the real line.
Note that, even though many results are similar to those for Schwarzian Measure on the circle, as derived in \citep{LosevCorr, LosevLDP}, some of the proofs require significant adaptations.
The main results from this section are proved in Section~\ref{sectSchwLineProofs}.
In this section we provide proofs for those results, which follow immediately either from \citep{LosevCorr, LosevLDP} (we recall the necessary results in Section~\ref{sect_meas_construct}) or from other results in this section.

\subsubsection{Correlation functions}

We start with the results concerning the correlation functions of cross-ratio observables for the measure $\d\FMInf{\varkappa^2}$.
The relevant observables are
\begin{equation}\label{eqObs_0_def}
\obs{f}{s}{t}_0
=
\frac{\sqrt{f'(s)f'(t)}}{|f(t)-f(s)|}, 
\end{equation}
for $s \neq t\in \R$ and monotonic $f\in C^1(\R)$. 
To be more precise, in this section and throughout the paper we will work with the subset $\CZPlus(\R)$ of $C^1(\R)$ 
\begin{equation}
\CZPlus(\R) = \left\{f\in C^1(\R) \left| \,\,
f(0) =0; \, f'(0)=1; \, \forall t\in \R: \, f'(t)>0
\right.\right\}.
\end{equation}
The set $\CZPlus(\R)$ inherits the topology $\TopR$ from $C^1(\R)$.
From Definition~\ref{defnSchwLine} it is clear that the measure $\d\FMInf{\varkappa^2}$ is supported on $\CZPlus(\R)$.

\begin{rmrk}\label{rmrkSLLineAction}
There is an alternative way to think about $\d\FMInf{\varkappa^2}$ and its support.
Consider the set $C^1\big(\R \to \R\cup\{\infty\}\big)$ of functions from $\R$ to $\R\cup\{\infty\}$, identifying the latter with the circle.
Then $\SL(2, \R)$ acts on  $C^1\big(\R \to \R\cup\{\infty\}\big)$ by post-compositions with fractional linear transformations
\begin{equation}
f  \mapsto 
\frac{af+b}{cf+d}, \qquad \text{ for } a, b, c, d\in \R, \, ad-bc = 1.
\end{equation}
From this perspective we can identify $\CZPlus \cong C^1\big(\R \to \R\cup\{\infty\}\big)/\SL(2, \R)$.
In other words, one gets $\CZPlus$ from $C^1\big(\R \to \R\cup\{\infty\}\big)/\SL(2, \R)$ by fixing the gauge $f(0)=0, f'(0)=1, f(\infty) = \infty$.

Thus, we can view $\d\FMInf{\varkappa^2}$ as being supported on the quotient space $C^1\big(\R \to \R\cup\{\infty\}\big)/\SL(2, \R)$.
Moreover, observables $\obs{f}{\cdot}{\cdot}_0$, introduced in \eqref{eqObs_0_def}, are invariant under this $\SL(2, \R)$ action and, thus, define observables on $C^1\big(\R \to \R\cup\{\infty\}\big)/\SL(2, \R)$. 
This point of view is similar to our approach to the Schwarzian Measure on the circle, which we think of being supported on the quotient space $\Diff^1(\T)/\SL(2, \R)$.
In the case of Schwarzian Measure on the line, however, we prefer to work with the explicit gauge (i.e. with the space $\CZPlus(\R)$), as it seems more natural.

Note also that the action of $\SL(2, \R)$ on $\Diff^1(\T)$ is different from its action on $C^1\big(\R \to \R\cup\{\infty\}\big)/\SL(2, \R)$ described here.
We further discuss how these two actions are related in Section~\ref{sectProofStrat}.
\end{rmrk}

\medskip

Here we describe a diagrammatic representation for observables $\obs{f}{\cdot}{\cdot}_0$ defined on the real line $\R$. 
This representation is similar to the one described for the circle in \citep{LosevCorr}. 

Let $\left\{s_j\right\}_{j=1}^N$  and $\left\{t_j\right\}_{j=1}^N$ be points on $\R$, such that $\forall j: s_j < t_j$.
Let also $\Big\{\obs{f}{s_{j}}{t_j}_0\Big\}_{j=1}^{N}$ be a set of observables. 
We represent them as a diagram on the real line.

\begin{figure}[tb]\centering
\begin{subfigure}[t]{0.55\textwidth}
\centering
\begin{tikzpicture}
\newcommand\radius{8}
\newcommand\ra{0.43}
\newcommand\ca{0.5}
\newcommand\rb{0.19}
\newcommand\cb{0.39}
\newcommand\rc{0.1}
\newcommand\cc{0.75}
\newcommand\rd{0.1}
\newcommand\cd{0.3}

	\node (n0) at (0, 0) {};
	\node (nT) at (\radius, 0) {};
	\node[shape=circle, fill=black, scale=0.5,label={-90:$s_1$}] (ns1) at (\ca *\radius - \ra *\radius, 0) {};
	\node[shape=circle, fill=black, scale=0.5,label={-90:$t_1$}] (nt1) at (\ca *\radius + \ra *\radius, 0) {};
	\node[shape=circle, fill=black, scale=0.5,label={-90:$s_2=s_4$}] (ns2) at (\cb *\radius - \rb *\radius, 0) {};
	\node[shape=circle, fill=black, scale=0.5,label={-90:$t_2$}] (nt2) at (\cb *\radius + \rb *\radius, 0) {};
	\node[shape=circle, fill=black, scale=0.5,label={-90:$s_3$}] (ns3) at (\cc *\radius - \rc *\radius, 0) {};
	\node[shape=circle, fill=black, scale=0.5,label={-90:$t_3$}] (nt3) at (\cc *\radius + \rc *\radius, 0) {};
	\node[shape=circle, fill=black, scale=0.5,label={-90:$t_4$}] (nt4) at (\cd *\radius + \rd *\radius, 0) {};

  	\draw (n0) -- (nT);
  	\draw (\ca *\radius + \ra *\radius, 0) arc  (0:180:\ra *\radius);
  	\draw (\cb *\radius + \rb *\radius, 0) arc  (0:180:\rb *\radius);
  	\draw (\cc *\radius + \rc *\radius, 0) arc  (0:180:\rc *\radius);
  	\draw (\cd *\radius + \rd *\radius, 0) arc  (0:180:\rd *\radius);

  \node[label={90:$\color{\fcolor} k_0$}] (nf0) at ( 0.5* \radius , 0.45* \radius) {};
  \node[label={90:$\color{\fcolor} k_1$}] (nf1) at ( 0.55* \radius , 0.23* \radius) {};
  \node[label={90:$\color{\fcolor} k_2$}] (nf2) at ( 0.44* \radius , 0.05* \radius) {};
  \node[label={90:$\color{\fcolor} k_3$}] (nf3) at ( 0.3* \radius , 0.0* \radius) {};
  \node[label={90:$\color{\fcolor} k_4$}] (nf4) at ( 0.75* \radius , 0.0* \radius) {};

\end{tikzpicture}
\caption{An example of a diagram. Here,\\ 
$\tau_0 = \infty$, \\
$\tau_1 = (s_2-s_1)+(s_3-t_2)+(t_1-t_3)$, \\
$\tau_2 = t_2-t_4$, \\
$\tau_3 = t_4-s_4$,\\ 
$\tau_4 = t_3-s_3$.
}\label{fig:observables_line_diagram_def_fourier}
\end{subfigure}
\hspace{0.05\textwidth}
\begin{subfigure}[t]{0.36\textwidth}
\centering
\begin{tikzpicture}
\newcommand\radius{4}
 	\draw (-0.2*\radius, 0) -- (0.2*\radius, 0);
 	\draw (0.8* \radius, 0) -- (1.2*\radius, 0);
  \node[shape=circle,fill=black, scale=0.5,label={-90:$s_j$}] (ns1) at (0, 0) {};
  \node[shape=circle,fill=black, scale=0.5,label={-90:$t_j$}] (nt1) at (\radius, 0) {};
 
	\draw (0,0) arc (180: 0:0.5*\radius);

	\node[label={90:$\color{\fcolor} w_2(j)$}] (nf2) at (0.5*\radius, 0.2*\radius) {};
	\node[label={90:$\color{\fcolor} w_1(j)$}] (nf1) at (0.5*\radius, 0.55*\radius) {};

\end{tikzpicture}
\caption{We use $w_1(j)$ and $w_2(j)$ to denote the Fourier variables $k_m$ corresponding to the domains that lie on both sides of the half-circle that connects $s_j$ with $t_j$. \\
In Figure \ref{fig:observables_line_diagram_def_fourier}, for example, $w_1(2) = k_1$ and $w_2(2) = k_2$ (or vice versa).}
\label{fig:observables_line_diagram_obs_fourier}
\end{subfigure}
\caption{A diagrammatic representation of Fourier variables.}
\end{figure}

We draw the real line $\R$ and all the points $\{s_j\}_{j=1}^N$ and $\{t_j\}_{j=1}^N$ on it. 
For all $1\leq j \leq N$ we connect the point $s_j$ with the point $t_j$ with a half-circle in the upper half-plane.

\begin{defn}[\citep{LosevCorr}]
We say that a set of observables $\Big\{\obs{f}{s_{j}}{t_j}_0\Big\}_{j=1}^{N}$ is non-interlaced if the interiors of all the drawn half-circles on the corresponding diagram are pairwise non-intersecting. In other words, $\forall p, q \in \{1, \ldots, N\}$ we have that one of the following holds: $s_p\leq s_q < t_q\leq t_p$, $s_p< t_p\leq s_q< t_q$, $s_q\leq s_p < t_p\leq t_q$, or $s_q< t_q\leq s_p< t_p$.
\end{defn}

Let $\Big\{\obs{f}{s_{j}}{t_j}_0\Big\}_{j=1}^{N}$ be a set of non-interlaced observables. 
The $N$ drawn half-circles on the corresponding diagram divide the half-plane into $N+1$ connected domains, one of which is unbounded and the $N$ others are bounded.
We number the bounded domains with integers from $1$ to $N$, and assign number $0$ to the unbounded domain.
For each $m \in \{0, \ldots N\}$ we associate a Fourier variable $k_m$ to domain number $m$. 
We also let $\tau_m$ be the total length of all line segments (parts of $\R$) which form the boundary of the $m$-th domain (see Figure \ref{fig:observables_line_diagram_def_fourier}).

For each $j \in \{1, \ldots N\}$ we define $w_1(j)$ and $w_2(j)$ to be the Fourier variables corresponding to the domains that contain the half-circle connecting $s_j$ with $t_j$ in their boundaries (see Figure \ref{fig:observables_line_diagram_obs_fourier}). 
All formulae will be symmetric in $w_1$ and $w_2$, so the exact order does not matter.

We use the notation $\Gamma(l\pm ik\pm iw)$ for
\begin{equation}
\Gamma(l\pm ik\pm iw) := 
\Gamma(l + ik + iw)  \Gamma(l + ik - iw)  \Gamma(l - ik + iw)  \Gamma(l - ik - iw).
\end{equation}
\begin{thr} \label{thrLineCorrelations}
For $N\geq 0$ let $\big\{\obs{f}{s_{j}}{t_j}_0\big\}_{j=1}^{N}$ be a set of non-interlaced observables, and $\{l_j\}_{j=1}^N$ be a set of positive integers. 
Let also $\{k_m\}_{m=0}^{N}$ and $\{\tau_m\}_{m=0}^{N}$ be as above.
Then 
\begin{multline}
\int\limits_{\CZPlus(\R)} \prod_{j=1}^N \obs{f}{s_j}{t_j}^{l_j}_0\d\FMInf{\varkappa^2}(f)
=
\int_{\R_+^{N}} 
\prod_{j=1}^N \frac{\Gamma\Big(\frac{l_j}{2}\pm i w_1(j) \pm i w_2(j)\Big) }{2 \pi^2\, \Gamma(l_j)} \cdot \left(\frac{\varkappa^2}{2}\right)^{l_j}\\
\times
\left.
\prod_{m=1}^{N}\exp\left(-\frac{\tau_m \varkappa^2}{2}\cdot k_m^2\right)  \sinh(2\pi k_m)\, 2 k_m \d k_m\right|_{k_0 = 0},
\end{multline}
where the integral on the right-hand side converges absolutely.
\end{thr}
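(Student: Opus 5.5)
The plan is to pass from $f$ to the driving Brownian motion $\xi$ and use the Markov property of $\xi$ at the arc endpoints, gluing pieces with the Kontorovich--Lebedev transform. Under $f=\B_\xi$ we have $f'(s)=e^{\xi(s)}$ and $f(t)-f(s)=A_{s,t}:=\int_s^t e^{\xi(u)}\d u$, so
\[
\obs{f}{s}{t}_0=\frac{e^{(\xi(s)+\xi(t))/2}}{A_{s,t}}.
\]
This is invariant under $\xi\mapsto\xi+c$, so every observable depends only on the increments of $\xi$ on $[s,t]$.

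\textbf{Reductions.} Because of this invariance, the increments of $\xi$ on the two unbounded rays and on the gaps between outermost arcs do not enter the product. The left-hand side therefore factorises into independent contributions, one per outermost arc, with no weight attached to the unbounded domain; this is the source of the specialisation $k_0=0$. Nested arcs are handled by induction on $N$, removing an innermost arc or peeling an outermost one. By Brownian scaling ($\xi\mapsto\xi/\varkappa$, time rescaled by $\varkappa^2$) and the identity $\Gamma(l)A^{-l}=\int_0^\infty u^{l-1}e^{-uA}\d u$, I will reduce everything to the joint law of $(\xi(\tau),A_\tau)$ for exponential Brownian motion on an interval of length $\tau$.

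\textbf{Key analytic input.} I will use the Kontorovich--Lebedev representation of the transition kernel of the hyperbolic Brownian motion $(\xi,\int e^\xi)$, in the form already used in \citep{LosevCorr} for the Schwarzian Measures on intervals. Concretely, for $x=\sqrt{u}\,e^{a/2}$ and $y=\sqrt{u}\,e^{b/2}$, the weight $e^{-uA}$ with endpoint values $a,b$ of $\xi$ has a spectral expansion in $K_{2ik}(x)K_{2ik}(y)$. In that expansion each $k$ carries the factor $e^{-\tau\varkappa^2k^2/2}$ and the spectral measure $\sinh(2\pi k)\,2k\,\d k$, up to explicit constants. Each bounded domain $m$ then receives one spectral variable $k_m$. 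Its boundary segments have total length $\tau_m$, and they are stitched across the inner arcs by the Markov property, which produces exactly $\exp(-\tau_m\varkappa^2k_m^2/2)\sinh(2\pi k_m)\,2k_m\,\d k_m$.

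\textbf{Gluing across an arc.} At arc $j$, the integration over the remaining variables ($u$ and the endpoint value of $\xi$) pairs the Bessel functions of the two adjacent domains. This gives the classical integral
\[
\int_0^\infty x^{l-1}K_{2ik}(x)K_{2iw}(x)\d x=\frac{2^{l-3}}{\Gamma(l)}\Gamma\Big(\tfrac l2\pm ik\pm iw\Big),
\]
which, after collecting constants (including the powers $(\varkappa^2/2)^{l_j}$ from scaling), yields the factor attached to arc $j$ with $k=w_1(j)$, $w=w_2(j)$. For an outermost arc one of its two domains is the unbounded domain $0$. Since no kernel is attached to the unbounded region, the corresponding Bessel factor degenerates, and I expect it to be precisely the $k_0=0$ case.

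\textbf{Main obstacle.} The delicate step is justifying the exchange of the $u$-integrals, the Kontorovich--Lebedev expansions and the Brownian expectations, and proving absolute convergence of the right-hand side. For absolute convergence I will use Stirling: $|\Gamma(\frac l2\pm ik\pm iw)|\sim e^{-\pi(|k+w|+|k-w|)}\,\mathrm{poly}$, hence at most $e^{-2\pi\max(k,w)}$ up to polynomial factors. Every $k_m$ with $m\geq1$ appears in at least one such factor, so this decay absorbs the growth $\sinh(2\pi k_m)\lesssim e^{2\pi k_m}$. The Gaussian factors $e^{-\tau_m\varkappa^2k_m^2/2}$ then give integrability when $\tau_m>0$. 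When $\tau_m=0$ (coinciding endpoints), only the Gamma decay is available; there I would argue via monotone convergence from positive $\tau_m$, which also justifies Fubini through positivity of all integrands at the level of the $u$-representation. As a cross-check, I would compare with the circle formula of \citep{LosevCorr} for the Schwarzian Measures on intervals in the limit of large period, which should reproduce the same structure.
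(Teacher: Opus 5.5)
There is a genuine gap, and it sits in the nested case, which is where the content of the statement lies.

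\textbf{What works.} Your reduction to a single outermost arc is correct: it uses independent increments and the invariance of the observables under $\xi\mapsto\xi+c$. For an arc with nothing inside, your Feynman--Kac/Kontorovich--Lebedev computation also goes through. The two endpoints give two Mellin integrals $\int_0^\infty x^{l-1}K_{2ik}(x)\,\d x=2^{l-2}\Gamma(\tfrac l2+ik)\Gamma(\tfrac l2-ik)$. Their product, together with the $1/\Gamma(l)$ from the Laplace representation, reproduces the $k_0=0$ vertex. Note that this is not the $w=0$ case of your two-Bessel integral, which would contain $K_0$.

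\textbf{Where it breaks.} Suppose arc $j$ lies inside arc $i$. Your Laplace representation puts the potential $u_ie^{\xi}$ on the enclosing domain and $(u_i+u_j)e^{\xi}$ inside arc $j$. Integrating over $\xi(s_j)$ therefore gives $\int_0^\infty y^{l_j-1}K_{2ik}(\sqrt{u_i}\,y)\,K_{2iw}(\sqrt{u_i+u_j}\,y)\,\d y$. This is a hypergeometric expression, not $\Gamma(\tfrac{l_j}{2}\pm ik\pm iw)$. In addition, the $u_j$-integral couples the two endpoints $s_j$ and $t_j$.

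There is a second problem. The Markov property attaches an independent spectral variable to each segment of the enclosing domain, one on each side of arc $j$. To merge these into a single $k_m$ carrying $\exp(-\tau_m\varkappa^2k_m^2/2)$, you need the following fact. The whole block inside arc $j$, after integrating out $u_j$ and the inner path, must act diagonally on the eigenfunctions $K_{2ik}(\sqrt{u_i}\,e^{\xi/2})$ of the enclosing operator. Its eigenvalue must be the Gamma vertex integrated against the inner spectral data. This identity is nontrivial: neither the Markov property nor the classical Bessel integral you quote yields it. Without it, your ``induction on $N$'' has no inductive step.

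\textbf{How the paper avoids this.} That identity is exactly what Proposition~\ref{prpIntervalExpObsFormula_corr}, imported from \citep{LosevCorr}, provides for arbitrary non-interlaced configurations. The paper's proof is just a reduction to it.
\begin{itemize}
\item Choose $L$ with all $s_j,t_j\in[-L,L]$ and set $\tilde\xi(r)=\xi(r-L)-\xi(-L)$. Then $\B_{\tilde\xi}(\cdot+L)=e^{-\xi(-L)}\bigl(\B_\xi(\cdot)-\B_\xi(-L)\bigr)$ is an affine image of $\B_\xi$. So the observables of $\B_{\tilde\xi}$ at the shifted points coincide with those of $\B_\xi$ at $s_j,t_j$.
\item $\tilde\xi$ is Brownian motion on $[0,2L]$ started from $0$, whose law is $\int_{\R}\d\WS{\varkappa^2}{a}{2L}\,\d a$. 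Indeed, by Definition~\ref{defn:BB} the mass of $\WS{\varkappa^2}{a}{2L}$ is the Gaussian density of $\tilde\xi(2L)$ at $a$.
\item Integrate \eqref{eqPrpFinalResultInterval_corr} over $a$. By Fourier inversion, $\int_{\R}\frac{\cos(ak_0)}{\pi}\,\d a$ acts on the $k_0$-integral as evaluation at $k_0=0$, so $\tau_0$ drops out.
\item Since $\Gamma(\tfrac l2\pm ik\pm iw)=|\Gamma(\tfrac l2+i(k+w))|^2\,|\Gamma(\tfrac l2+i(k-w))|^2>0$, the right-hand integrand is positive. Absolute convergence then follows from finiteness of the left-hand side, which also removes your concern about $\tau_m=0$.
\end{itemize}
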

\begin{proof}
This follows from Proposition~\ref{prpIntervalExpObsFormula_corr} after integration over $a$.
\end{proof}

\begin{crl}\label{crlExpMomentLine}
For any $\varkappa>0$ and any $s\neq t \in \R$, 
\begin{equation}
\int_{\CZPlus(\R)}  \exp\left\{ \frac{8}{\varkappa^2}\, \obs{f}{s}{t}_0 \right\} \d\FMInf{\varkappa^2}(f) < \infty.
\end{equation}
\end{crl}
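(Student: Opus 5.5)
The plan is to expand the exponential into a power series, compute each moment using Theorem~\ref{thrLineCorrelations} with a single observable, and then exchange the sum with the Fourier integral using Tonelli's theorem. Everything is nonnegative, so the monotone convergence theorem gives
\begin{equation*}
\int_{\CZPlus(\R)} \exp\left\{ \frac{8}{\varkappa^2}\, \obs{f}{s}{t}_0 \right\} \d\FMInf{\varkappa^2}(f)
= 1+\sum_{l\geq 1} \frac{1}{l!}\left(\frac{8}{\varkappa^2}\right)^l \int_{\CZPlus(\R)} \obs{f}{s}{t}_0^{\,l}\,\d\FMInf{\varkappa^2}(f).
\end{equation*}
Without loss of generality take $s<t$. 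For $N=1$ the diagram consists of the unbounded domain, with $k_0=0$, and one bounded domain with Fourier variable $k_1=k$ and $\tau_1=t-s$. Hence $w_1(1)=k$ and $w_2(1)=0$, and Theorem~\ref{thrLineCorrelations} gives
\begin{equation*}
\int \obs{f}{s}{t}_0^{\,l}\,\d\FMInf{\varkappa^2}(f) = \int_0^\infty \frac{|\Gamma(\tfrac l2+ik)|^4}{2\pi^2\,\Gamma(l)}\left(\frac{\varkappa^2}{2}\right)^{l} e^{-(t-s)\varkappa^2k^2/2}\sinh(2\pi k)\,2k\,\d k .
\end{equation*}
Here I use $\Gamma(\tfrac l2\pm ik)^2=|\Gamma(\tfrac l2+ik)|^4$. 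The prefactors combine as $(8/\varkappa^2)^l(\varkappa^2/2)^l=4^l$, so the constant $8/\varkappa^2$ is chosen exactly so that $\varkappa$ drops out.

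By Tonelli's theorem, it then suffices to show that
\begin{equation*}
S(k)=\sum_{l\ge1}\frac{4^l\,|\Gamma(\tfrac l2+ik)|^4}{l!\,\Gamma(l)}
\end{equation*}
is bounded uniformly in $k\geq 0$. The remaining integral $\int_0^\infty e^{-(t-s)\varkappa^2k^2/2}\sinh(2\pi k)\,2k\,\d k$ is finite because $t-s>0$ and the Gaussian factor dominates the exponential growth of $\sinh$.

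For the uniform bound, first use $|\Gamma(x+ik)|\le\Gamma(x)$ for $x>0$, which follows from the integral representation of $\Gamma$. This reduces the problem to estimating $4^l\,\Gamma(l/2)^4/(\Gamma(l)\,\Gamma(l+1))$. Write $\Gamma(l/2)^2/\Gamma(l)=B(l/2,l/2)$. Stirling's formula (equivalently, the duplication formula) gives $B(a,a)\sim 2^{1-2a}\sqrt{\pi/a}$, so $B(l/2,l/2)^2\sim 4^{1-l}\cdot 2\pi/l$. Multiplying by $4^l\,\Gamma(l)/\Gamma(l+1)=4^l/l$ shows that the $l$-th term is $O(l^{-2})$ uniformly in $k$, and therefore $S(k)$ is bounded.

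The main obstacle is this borderline bookkeeping. The factor $4^l$ exactly cancels the geometric decay $4^{-l}$ of $B(l/2,l/2)^2$, so convergence depends only on the polynomial factors, which I would check carefully with explicit Stirling bounds rather than asymptotics. A secondary point is that the absolute convergence asserted in Theorem~\ref{thrLineCorrelations} justifies using its formula for each $l$. All integrands are positive, so the interchange of sum and integral needs no further justification.
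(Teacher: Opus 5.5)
Your proof is correct, but it follows a different route from the paper. The paper obtains the corollary in one line from the Brownian-bridge estimate of Proposition~\ref{prpObsExpMomentBound_corr}. Since $\obs{\B_{\xi}}{s}{t}_0$ depends only on the increments of $\xi$ on $[s,t]$, and Brownian motion on $[0,t-s]$ equals $\int_{\R}\WS{\varkappa^2}{a}{t-s}\,\d a$, integrating that bound over $a$ gives $\exp\{2000/((t-s)\varkappa^2)\}$. This needs no computation, but it imports a nontrivial estimate from \citep{LosevCorr}.

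You instead use only the moment formula of Theorem~\ref{thrLineCorrelations}, together with monotone convergence and Tonelli. That theorem is proved independently of this corollary, so there is no circularity. Your route is self-contained and gives the explicit bound $1+C\,\PartF\big((t-s)\varkappa^2\big)$. It also makes Remark~\ref{rmrkExpOptLine} transparent. Replacing $8/\varkappa^2$ by $(1+\delta)\,8/\varkappa^2$ multiplies the $l$-th term by $(1+\delta)^l$. Since $|\Gamma(\tfrac l2+ik)|/\Gamma(\tfrac l2)\to 1$ for each fixed $k$, the series then diverges for every $k>0$.

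For the borderline step you can avoid asymptotics entirely. The duplication formula gives exactly $4^l\,\Gamma(\tfrac l2)^4/(\Gamma(l)\,l!) = 4\pi\,\Gamma(\tfrac l2)^2/\big(l\,\Gamma(\tfrac{l+1}{2})^2\big)$. Log-convexity of $\Gamma$ gives $\Gamma(x)^2\le\Gamma(x-\tfrac12)\,\Gamma(x+\tfrac12)$, i.e. $\Gamma(x)^2/\Gamma(x+\tfrac12)^2\le (x-\tfrac12)^{-1}$ for $x>\tfrac12$. Hence the $l$-th term is at most $8\pi/(l(l-1))$ for $l\ge 2$, uniformly in $k$.
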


\begin{rmrk}\label{rmrkExpOptLine}
The constant $8/\varkappa^2$ in the exponential is sharp.
\end{rmrk}

\begin{proof}
This follows from Proposition~\ref{prpObsExpMomentBound_corr} after integration over $a$.
\end{proof}

Probability measure $\d\FMInf{\varkappa^2}$ on $\CZPlus(\R)$ is characterised by the correlation functions computed in Theorem~\ref{thrLineCorrelations}.
\begin{thr}\label{thrUniqLineSchw}
Let $\mathcal{P}$ be a Borel measure on $\CZPlus(\R)$. 
Suppose that for any $N\geq 0$, any set $\big\{\obs{f}{s_{j}}{t_j}_0\big\}_{j=1}^{N}$ of non-interlaced observables, and any set $\{l_j\}_{j=1}^N$ of positive integers we have 
\begin{equation}\label{eqThrUniqCorrAssumpt}
\int \prod_{j=1}^N \obs{f}{s_j}{t_j}^{l_j}_0 \d\mathcal{P}(f)
=
\int \prod_{j=1}^N \obs{f}{s_j}{t_j}^{l_j}_0 \d\FMInf{\varkappa^2}(f).
\end{equation}
Then for any Borel set $\Aset\subset \CZPlus(\R)$ we have
\begin{equation}
\mathcal{P}(\Aset) = \FMInf{\varkappa^2}(\Aset).
\end{equation}
\end{thr}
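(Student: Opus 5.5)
Since $\CZPlus(\R)$ with $\TopR$ is Polish and its Borel $\sigma$-algebra is generated by cylinder sets of values and derivatives, I would reduce the claim to finite-dimensional distributions, and then use the exponential moment bound of Corollary~\ref{crlExpMomentLine} to show these are determined by moments.

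\textbf{Step 1: encode $f$ by observables.} For $f\in\CZPlus(\R)$ set $\xi_f(t)=\log f'(t)$, so $f(t)=\int_0^t e^{\xi_f(s)}\d s$. The key identity is
\[
\obs{f}{s}{t}_0^{\,2}=\frac{f'(s)f'(t)}{(f(t)-f(s))^2}.
\]
Since $f(0)=0$ and $f'(0)=1$, we get $\obs{f}{0}{t}_0^{-2} = f(t)^2/f'(t)$. Together with the limiting relation $\obs{f}{s}{t}_0\,|t-s|\to 1$ as $t\to s$, this lets one recover $f$ from the observables. It is cleaner to use only nested pairs $(s,t)$ with a common endpoint, for instance all pairs $(0,t)$. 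Such a family is non-interlaced, since all half-circles share the endpoint $0$ and are therefore nested or disjoint. From the values $g(t):=\obs{f}{0}{t}_0$ for $t>0$ we have
\[
\frac{\d}{\d t}\Big(-\frac1{f(t)}\Big)=\frac{f'(t)}{f(t)^2}=g(t)^2 .
\]
Hence $-1/f(t)=-1/f(t_0)+\int_{t_0}^t g^2$. Combined with the small-$t$ asymptotics $f(t)\sim t$ (from $f'(0)=1$), this determines $f$ on $(0,\infty)$, and symmetrically on $(-\infty,0)$. Consequently the map $f\mapsto (\obs{f}{0}{t}_0)_{t\in\Q\setminus\{0\}}$ is injective and Borel on $\CZPlus(\R)$. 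Since $\CZPlus(\R)$ is Polish, the Lusin--Souslin theorem shows the $\sigma$-algebra it generates is the full Borel $\sigma$-algebra. So it suffices to show that $\mathcal P$ and $\FMInf{\varkappa^2}$ have the same finite-dimensional laws for the vector $(\obs{f}{0}{t_1}_0,\dots,\obs{f}{0}{t_n}_0)$.

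\textbf{Step 2: moments determine the law.} Taking $N=0$ in \eqref{eqThrUniqCorrAssumpt} shows that $\mathcal P$ is a probability measure. Any product $\prod_j \obs{f}{0}{t_j}_0^{l_j}$ is an admissible non-interlaced correlation, so all mixed moments of the two random vectors agree. By Corollary~\ref{crlExpMomentLine}, each coordinate has a finite exponential moment under $\FMInf{\varkappa^2}$. Because $\mathcal P$ has the same one-dimensional moments, summing the power series gives $\int e^{c\,\obs{f}{0}{t}_0}\d\mathcal P<\infty$ for $c<8/\varkappa^2$; this uses positivity of the observables and monotone convergence. By Hölder, the joint moment generating function is finite near the origin for both measures. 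A multivariate distribution with a moment generating function finite in a neighbourhood of $0$ is determined by its moments, so the finite-dimensional laws coincide. A $\pi$–$\lambda$ argument then extends equality from cylinder events to all Borel sets $\Aset$.

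\textbf{Main obstacle.} I expect Step 1 to be the delicate part: making rigorous that the countable family of cross-ratio observables generates the Borel $\sigma$-algebra of $(\CZPlus(\R),\TopR)$, and in particular handling the boundary behaviour at $t\to0$ in the reconstruction. If the integration constant is awkward, I would instead use pairs $(t_0,t)$ with $t_0$ fixed. These determine $f$ up to the three-parameter Möbius ambiguity of Remark~\ref{rmrkSLLineAction}, and the gauge $f(0)=0$, $f'(0)=1$, $f(\infty)=\infty$ then fixes it uniquely. Measurability of the inverse would be handled by Lusin--Souslin rather than by explicit formulas.
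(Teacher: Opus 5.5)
Your Step~2 is sound: it is essentially the paper's Lemma~\ref{lmmUniqLineExpExten}, and you justify the transfer of exponential moments to $\mathcal{P}$ more carefully than the paper does. Step~1, however, has a genuine gap, because the map $f\mapsto(\obs{f}{0}{t}_0)_{t\in\mathbb{Q}\setminus\{0\}}$ is \emph{not} injective on $\CZPlus(\R)$. In $-1/f(t)=-1/f(t_0)+\int_{t_0}^t\obs{f}{0}{u}_0^{\,2}\d u$ the constant cannot be fixed by letting $t_0\to 0$. That would require knowing $\lim_{t_0\to 0}\big(1/f(t_0)-1/t_0\big)$, which the observables do not encode and which need not even exist for $f\in C^1$. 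Concretely, for any $c>0$ take $\tilde f(t)=t/(1+ct)$ for $t\geq 0$ and $\tilde f(t)=t$ for $t<0$. This function lies in $\CZPlus(\R)$ and satisfies $\obs{\tilde f}{0}{t}_0=1/|t|=\obs{\mathrm{id}}{0}{t}_0$ for all $t\neq 0$. So the additive constants of $1/f$ on the two half-lines are free, and equality of laws for your family cannot by itself give $\mathcal{P}=\FMInf{\varkappa^2}$.

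Your fallback does not repair this. Every observable $\obs{f}{s}{t}_0$ is invariant under $f\mapsto f/(1+cf)$, and this map sends $\CZPlus(\R)$ into itself whenever $-1/c\notin f(\R)$ (e.g.\ $f=\tanh$, $0<|c|<1$). The normalisation $f(\pm\infty)=\pm\infty$ you appeal to is not part of the definition of $\CZPlus(\R)$. It holds $\FMInf{\varkappa^2}$-almost surely, but under $\mathcal{P}$ it is exactly what has to be proved.

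This is the missing idea, and it is the content of the lemma following Lemma~\ref{lmmUniqLineExpExten}. By~\eqref{eqCross3P2},
\[
R_T(f):=\frac{1}{f(T)}-\frac{1}{f(-T)}=\frac{\obs{f}{-T}{0}_0\,\obs{f}{0}{T}_0}{\obs{f}{-T}{T}_0}
\]
is a function of a non-interlaced triple, so it has the same law under $\mathcal{P}$ and $\FMInf{\varkappa^2}$. Under $\FMInf{\varkappa^2}$ it tends to $0$ in probability, since $\int_0^{T}e^{\xi(s)}\d s$ and $\int_{-T}^{0}e^{\xi(s)}\d s$ diverge for Brownian motion. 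Because $R_T$ is monotone in $T$, this gives $\mathcal{P}(S)=1$ for $S=\{f:\ f(\pm\infty)=\pm\infty\}$.

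Note that this needs an observable straddling $0$, such as $\obs{f}{-T}{T}_0$. Your family with common endpoint $0$ cannot even detect $S$, since $\mathrm{id}\in S$ and $\tilde f\notin S$. Once $\mathcal{P}(S)=1$ is known, your reconstruction works if you fix the constant at infinity rather than at $0$. On $S$ one has $1/f(t)=\int_t^{\infty}\obs{f}{0}{u}_0^{\,2}\d u$ for $t>0$ and $-1/f(t)=\int_{-\infty}^{t}\obs{f}{0}{u}_0^{\,2}\d u$ for $t<0$. So the observable map is injective on the Borel set $S$, and your Lusin--Souslin argument applies there.

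The paper avoids Lusin--Souslin. It writes $f(t_j),f'(t_j)$ as continuous functions of non-interlaced observables and of $f(t_{\pm N})$. It then recovers $f(t_{\pm N})$ as $T\to\infty$ limits of observable expressions, using the same almost-sure divergence.
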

\begin{rmrk}
By taking $N=0$ in \eqref{eqThrUniqCorrAssumpt} we see that the total mass of $\d\mathcal{P}$ equals the total mass of $\d\FMInf{\varkappa^2}$. 
Thus, $\d\mathcal{P}$ is also a probability measure.
\end{rmrk}
We prove Theorem~\ref{thrUniqLineSchw} in Section~\ref{sectSchwLineUnique}.
We use this uniqueness theorem to characterise the limit in the proof of Theorem~\ref{thrLocalLimit}.

\subsubsection{H\"{o}lder condition for observables}
It is also possible to relate observables $\obs{f}{\cdot}{\cdot}_0$ to the H\"{o}lder property.
This observation and related techniques are key for proving tightness of measures in the context of Theorem~\ref{thrLocalLimit}.
\begin{defn}\label{defnHolderLine}
We say that $f\in \CZPlus(\R)$ satisfies \emph{Local H\"{o}lder condition for observables} at $x\in\R$ with exponent $\alpha$ and constant $A$ if for some $\delta>0$,
\begin{equation} \label{eqObsHolderLine}
\left|\obs{f}{s}{t}_0 - \frac{1}{|t-s|}\right| \leq A \left| t-s \right|^{\alpha-1}
\end{equation}
for all $s \neq t \in (x-\delta, x+\delta)$.
We also write $f\in \ObsHolLoc{\alpha}(x, \delta, A)\subset \CZPlus(\R)$.
\end{defn}

We show that the Local H\"{o}lder condition for observables, defined above, is equivalent to the local H\"{o}lder condition for the logarithm of the derivative.

\begin{thr}\label{thrLocObsHolderLine}
Fix $\alpha \in (0, 1)$.
For any $A, \delta > 0$ there exist $A'$ and $\delta'$ such that
if for some $x\in\R$ we have $\left|\log f'(t) - \log f'(s) \right| \leq A|t-s|^{\alpha}$ for all $s, t\in (x-\delta, x+\delta)$, then
$f\in \ObsHolLoc{\alpha}(x, \delta', A')$.

A converse is also true. 
For any $A', \delta' > 0$ there exist $A''$ and $\delta''$ such that
if $f\in \ObsHolLoc{\alpha}(x, \delta', A')$ for some $x\in\R$, then for all $s, t\in (x-\delta'', x+\delta'')$ we have
$\left|\log f'(t) - \log f'(s) \right| \leq A''|t-s|^{\alpha}$.
\end{thr}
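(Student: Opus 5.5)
Both directions reduce to one-dimensional calculus after writing $f'=e^{g}$ with $g=\log f'$. For $s<t$,
\[
\obs{f}{s}{t}_0 = \frac{e^{(g(s)+g(t))/2}}{\int_s^t e^{g(u)}\d u},
\qquad\text{so}\qquad
(t-s)\,\obs{f}{s}{t}_0 = \left(\frac{1}{t-s}\int_s^t e^{g(u)-\frac{g(s)+g(t)}{2}}\d u\right)^{-1}.
\]
Define the centered exponent $h_{s,t}(u):=g(u)-\tfrac{g(s)+g(t)}{2}$. The statement is then about comparing the average of $e^{h_{s,t}}$ over $[s,t]$ with $1$, at the scale $|t-s|^{\alpha}$.

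\textbf{Forward direction.} Take $\delta'=\delta$. If $s,t\in(x-\delta,x+\delta)$, then for every $u\in[s,t]$ the Hölder hypothesis gives $|h_{s,t}(u)|\le A|t-s|^{\alpha}$. Hence the average $M:=\frac{1}{t-s}\int_s^t e^{h_{s,t}}$ lies in $[e^{-A|t-s|^\alpha},e^{A|t-s|^\alpha}]$. Since $|t-s|<2\delta$, the exponent is bounded by $A(2\delta)^\alpha$. Using $|M^{-1}-1|\le C|h|$ for bounded $h$, we get $|(t-s)\obs{f}{s}{t}_0-1|\le A'|t-s|^\alpha$, with $A'=Ae^{A(2\delta)^\alpha}$. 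Dividing by $|t-s|$ gives exactly \eqref{eqObsHolderLine}.

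\textbf{Converse.} Here the hypothesis controls only the averages $M(s,t)=1+O(|t-s|^\alpha)$, and we must recover pointwise control of $g$. I would use a three-point (midpoint) comparison. First, letting $t\to s$ in the averaging identity and using continuity of $g$ is consistent with the hypothesis but gives nothing new. The real step is to fix $s<t$ with midpoint $m$, and to compare the quantities $\int_s^m e^g$, $\int_m^t e^g$ and $\int_s^t e^g$, each of which is known up to a factor $1+O(|t-s|^\alpha)$:
\[
\int_s^m e^g\approx \tfrac{t-s}{2}e^{(g(s)+g(m))/2},\qquad
\int_m^t e^g\approx \tfrac{t-s}{2}e^{(g(m)+g(t))/2},\qquad
\int_s^t e^g\approx (t-s)e^{(g(s)+g(t))/2}.
\]
Additivity of the integral gives
\[
\tfrac12\left(e^{(a+c)/2}+e^{(c+b)/2}\right)=e^{(a+b)/2}\bigl(1+O(|t-s|^\alpha)\bigr),
\qquad a=g(s),\; b=g(t),\; c=g(m).
\]
Writing $X=e^{(c-b)/2}$ and $Y=e^{(c-a)/2}$, this reads $X+Y=2+O(|t-s|^\alpha)$.

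That single relation does not determine $a-b$. So I would apply the same identity on the two half-intervals, recursively at dyadic scales, and aim for a bound of the form
\[
|g(t)-g(s)|\le \tfrac12|g(t')-g(s')|+C|t-s|^\alpha
\]
for suitable subintervals $[s',t']$, or an equivalent control of second differences. Iterating such a bound gives the Hölder estimate, because the geometric series $\sum_k 2^{-k\alpha}$ converges.

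\textbf{Main obstacle and fallback.} The step I expect to be hardest is extracting an estimate on the first difference $g(t)-g(s)$ from these symmetric three-point identities, since the observable is invariant under Möbius post-composition and hence blind to some directions. In particular, the constraint $X+Y\approx 2$ is symmetric in $a$ and $b$, so it alone cannot fix their difference. If the dyadic iteration does not close, I would switch to a Schwarzian-type quantity. Taking four points $s<u<v<t$, the cross-ratio combination
\[
\frac{\obs{f}{s}{v}_0\,\obs{f}{u}{t}_0}{\obs{f}{s}{t}_0\,\obs{f}{u}{v}_0}
\]
is built from $f$ alone. Its deviation from the identity-map value is $O(|t-s|^\alpha)$, and this controls the increments of $\log f'$ up to the Möbius degrees of freedom. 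Those degrees of freedom are exactly affine-in-$u$ perturbations of $g$ at the relevant scale, and their effect is bounded by the two-point estimate. The constants $A''$ and $\delta''$ (for instance $\delta''=\delta'/4$, so that all auxiliary points stay in the window) would then be chosen to make this iteration close.
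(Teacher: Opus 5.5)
Your forward direction is correct and is essentially the paper's argument; the centred-exponent bookkeeping even gives $\delta'=\delta$ and an explicit $A'$. The converse, however, is not proved. You derive the midpoint relation $X+Y=2+O(|t-s|^{\alpha})$ and rightly note that it does not determine $g(t)-g(s)$, but you then only announce a target recursion $|g(t)-g(s)|\le\tfrac12|g(t')-g(s')|+C|t-s|^{\alpha}$ without deriving it.

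In fact no such recursion over subintervals, with $C$ independent of where $[s,t]$ sits in the window, can exist. After iteration it would give a H\"older bound on all of $(x-\delta',x+\delta')$, and that is false. Take $x=0$, $p>\delta'$, and $f(u)=pu/(p-u)$ on $(-\delta',\delta')$, extended beyond $\delta'$ to an increasing $C^1$ function; note $f(0)=0$ and $f'(0)=1$. Since $f$ is a M\"obius image of the identity with pole outside the window, $\mathcal{O}_0(f;s,t)=1/|t-s|$ exactly there, so the hypothesis holds for every $A'$. But $\log f'(u)=2\log p-2\log(p-u)$. Choosing $s=p-3h$ and $t=p-h$ with $h$ just above $p-\delta'$ gives $|\log f'(t)-\log f'(s)|=2\log 3$, while $|t-s|^{\alpha}=(2h)^{\alpha}\to0$ as $p\downarrow\delta'$. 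So $A''$ must blow up as $\delta''\uparrow\delta'$. The margin is not merely there to keep auxiliary points in the window: the proof needs a non-scale-invariant step that uses it.

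Your fallback names this issue but does not resolve it. The two-point bound is itself M\"obius-invariant, so it cannot by itself control the M\"obius degree of freedom. Moreover, the four-point combination you write is derivative-free, since the factors $f'$ cancel; it is just the cross-ratio of $f(s),f(u),f(v),f(t)$.

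The missing idea is to pin the gauge once, using positivity at a fixed scale $\varepsilon$, and then transport that information to small scales by an exact identity. The paper does this in two steps. First, for fixed small $\varepsilon$ (with $\delta''+\varepsilon<\delta'$),
\[
\frac{\mathcal{O}_0(f;s-\varepsilon,s)\,\mathcal{O}_0(f;s,s+\varepsilon)}{\mathcal{O}_0(f;s-\varepsilon,s+\varepsilon)}
=f'(s)\Bigl(\frac{1}{f(s)-f(s-\varepsilon)}+\frac{1}{f(s+\varepsilon)-f(s)}\Bigr).
\]
The left side is at most $2\varepsilon^{-1}+C\varepsilon^{\alpha-1}$, and the right side is a sum of two positive terms. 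Hence $f'(s)/(f(s+\varepsilon)-f(s))\le C\varepsilon^{-1}$. This is exactly where the fact that the pole lies outside $f([s-\varepsilon,s+\varepsilon])$ enters. Your observation $X,Y\le 2+O(|t-s|^{\alpha})$ is information of the same type, but you tried to iterate it rather than use it once at a fixed scale.

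Second, use the exact identity $\partial_\tau\bigl[f'(s)/(f(s+\tau)-f(s))\bigr]=-\mathcal{O}_0(f;s,s+\tau)^2$. Integrate it from $\eta$ to $\varepsilon$ against $|\mathcal{O}_0(f;s,s+\tau)^2-\tau^{-2}|\le C\tau^{\alpha-2}$, using the anchor bound for the constant of integration. This gives $f'(s)/(f(s+\eta)-f(s))=\eta^{-1}+O(\eta^{\alpha-1})$. Dividing by $\mathcal{O}_0(f;s,s+\eta)=\eta^{-1}+O(\eta^{\alpha-1})$ yields $\sqrt{f'(s)/f'(s+\eta)}=1+O(\eta^{\alpha})$, which is the H\"older bound for $\log f'$, with constants depending on $\varepsilon$.

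In your notation, the one-sided average $\eta^{-1}\int_s^{s+\eta}e^{g-g(s)}$ is recovered from the observables by integrating in the second endpoint, with the constant of integration controlled at scale $\varepsilon$. That is the step your proposal lacks.
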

This theorem is proved in Section~\ref{sectSchwLineHolder}.
Here we state an immediate Corollary, which allows to describe very useful compact subsets of $\CZPlus (\R)$.
This corollary is crucial for proving tightness in Theorem~\ref{thrLocalLimit}.

\begin{crl}\label{crlCpmctFromObs}
A set $\cmpct \subset \CZPlus (\R)$ is precompact if for any $x\in \R$ there exist $A_x, \delta_x>0$ such that $\cmpct \subset \ObsHolLoc{\alpha}(x, \delta_x, A_x)$.
\end{crl}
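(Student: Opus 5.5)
The plan is to reduce the statement to the Arzelà--Ascoli theorem for the topology $\TopR$, using Theorem~\ref{thrLocObsHolderLine} to turn the observable condition into Hölder control of $\log f'$. Since $(C^1(\R),\TopR)$ is metrised by $\distCR$, a set $\cmpct$ is precompact exactly when, for every $n$, the restrictions $\{f|_{[-n,n]}\}$ and $\{f'|_{[-n,n]}\}$ are uniformly bounded and equicontinuous. A diagonal argument over $n$ then yields, from any sequence in $\cmpct$, a subsequence converging in $C^1[-n,n]$ for all $n$, hence in $\distCR$. The limit is only claimed to lie in $C^1(\R)$, which is closed, so precompactness in $C^1(\R)$ is what we prove. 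I would also observe that the limit actually stays in $\CZPlus(\R)$, because the bounds below keep $\log f'$ locally bounded and so $f'>0$ passes to the limit.

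\textbf{Step 1 (local Hölder bounds).} For each $x\in\R$, apply the converse direction of Theorem~\ref{thrLocObsHolderLine} with $A'=A_x$ and $\delta'=\delta_x$. This gives $A''_x,\delta''_x>0$, depending only on $(A_x,\delta_x)$ and not on $f$, such that
\begin{equation*}
\left|\log f'(t)-\log f'(s)\right|\le A''_x|t-s|^{\alpha}\qquad\text{for all } s,t\in(x-\delta''_x,x+\delta''_x),
\end{equation*}
uniformly over $f\in\cmpct$. The uniformity over $f$ is the key point. It comes from the fact that the constants in Theorem~\ref{thrLocObsHolderLine} depend only on $\alpha$, $A'$ and $\delta'$.

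\textbf{Step 2 (globalise on compacts).} Fix $n$ and cover $[-n,n]$ by finitely many intervals $(x_i-\delta''_{x_i},x_i+\delta''_{x_i})$. Since the cover is connected and must contain $0$, we can chain through overlapping intervals from $0$ to any $t\in[-n,n]$. Using $\log f'(0)=0$ and summing the local bounds along the chain gives $\sup_{[-n,n]}|\log f'|\le C_n$ uniformly over $\cmpct$. It follows that $e^{-C_n}\le f'\le e^{C_n}$ on $[-n,n]$, and together with $f(0)=0$ this gives $|f|\le n e^{C_n}$.

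\textbf{Step 3 (equicontinuity).} The family $\{f\}$ is equi-Lipschitz on $[-n,n]$ because $f'$ is uniformly bounded there. For $f'$, the inequality $|e^a-e^b|\le e^{\max(a,b)}|a-b|$ together with the local Hölder bound gives $|f'(t)-f'(s)|\le e^{C_n}A''_{x_i}|t-s|^\alpha$ whenever $s,t$ lie in a common cover interval. Taking a Lebesgue number of the finite cover makes this a uniform modulus of continuity on $[-n,n]$. Arzelà--Ascoli then applies on each $[-n,n]$, and the diagonal argument finishes the proof.

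The only genuinely nontrivial input is Theorem~\ref{thrLocObsHolderLine}, which we take as given. The remaining delicate point is checking that its constants $A'',\delta''$ really are independent of $f$ and depend only on $A_x,\delta_x$, which the statement asserts. Once that is granted, the main work is the chaining in Step 2, which supplies the uniform two-sided bound on $f'$ needed to convert control of $\log f'$ into control of $f'$.
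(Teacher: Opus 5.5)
Your proposal is correct and follows essentially the same route as the paper. The converse direction of Theorem~\ref{thrLocObsHolderLine} turns the observable condition into H\"{o}lder bounds on $\log f'$ that are uniform over $\cmpct$. The normalisation $f(0)=0$, $f'(0)=1$ together with Arzel\`a--Ascoli then gives a subsequence converging in $C^1[-n,n]$, and a diagonal argument finishes.

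Compared with the paper, you make explicit the chaining/Lebesgue-number step that yields a uniform bound on $\log f'$ over $[-n,n]$, and you check that limits remain in $\CZPlus(\R)$; the paper leaves both implicit.
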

\begin{proof}
Since $\CZPlus(\R)$ is a metric space (with metric given by \eqref{eqDefMetricC1}), it is sufficient to prove sequential compactness.

Let $f_n\in \cmpct$ be some sequence.
We need to show that there exists a converging subsequence.

Fix an interval $[-N, N]$.
Using Theorem~\ref{thrLocObsHolderLine}, the fact that for any $f\in \CZPlus(\R)$ we have $f(0)=0$ and $f'(0)=1$, and Arzel\`a--Ascoli theorem applied to $\log f'$, we can deduce that there exists a subsequence $\{f_{m(N, n)}\}_{n=1}^{\infty}$ of $\{f_{n}\}_{n=1}^{\infty}$ which converges in $C^1\big([-N, N]\big)$.

By a diagonalisation argument we obtain a subsequence $\{f_{m(n, n)}\}_{n=1}^{\infty}$ of $\{f_{n}\}_{n=1}^{\infty}$ which converges on every interval $[-n, n]$ with $n>0$.
By the definition of the metrics \eqref{eqDefMetricC1} on $\CZPlus(\R)$, we deduce that this subsequence converges in $\CZPlus(\R)$.
\end{proof}

\subsection{Ideas of the proofs}
\label{sectProofStrat}

Below we outline some of the key ideas behind the proofs of Theorems~\ref{thrLocalLimit}, \ref{thrGlobalConvJumpBound}, \ref{thrGlobalConvJumpPossible}.

\subsubsection{Ideas of the Theorem~\ref{thrLocalLimit} proof}
In order to prove the Theorem, we replace the map $\Emb{\beta}$, defined in~\eqref{eqDefFMap}, with another map, which is more suitable in this context.
Let us explain what this other map is, and what are its benefits. 

Instead of \eqref{eqDefFMap}, one can consider the map 
\begin{equation}\label{eqDefFaltern}
\phi \mapsto 
\tfrac{\beta}{\pi} \tan\left(\tfrac{\pi}{\beta} \,
\Emb{\beta}_\phi (t)
\right) 
=
\tfrac{\beta}{\pi} \tan\left(\pi \,
\GProj_\phi\big(\tfrac{t}{\beta}\big)
\right)
,
\end{equation}
where the right-hand side is viewed as a function of $t\in \R$.
Informally, for large $\beta$, this map becomes asymptotically equivalent to $\Emb{\beta}$, since $\forall y\in \R: \, \lim_{\beta\to\infty} \frac{\beta}{\pi} \tan (\frac{\pi}{\beta} \, y ) =y$.
Then, informally, Theorem~\ref{thrLocalLimit} says that push-forwards of $\mathcal{Z}^{-1}_{\sigma^2}\cdot\d\FMeas{\sigma^2}$ under this map converge to $\d\FMInf{\varkappa^2}$ whenever $\sigma, \beta\to\infty$ with $\sigma^2/\beta \to \varkappa^2$.

The advantage of the map~\eqref{eqDefFaltern}, compared to $\Emb{\beta}$, is that it behaves much nicer under $\SL(2, \R)$ action on $\phi$.
Indeed, the $\SL(2, \R)$ action on $\phi$ coincides with the action by post-compositions of the group of fractional linear transformations  on $\tan(\pi \phi)$.
This means that changing the gauge (i.e. replacing $\GProj_\phi$ with another representative of the conjugacy class) is the same as applying a fractional linear transformation to the right-hand side of~\eqref{eqDefFaltern}.
Note that this is precisely the natural action of $\SL(2, \R)$ in the context of the Schwarzian Measure on the real line, see Remark~\ref{rmrkSLLineAction}.

Moreover, natural observables for the Schwarzian Field Theory on the circle, given by
\begin{equation}
\obs{\phi}{s}{t} = \frac{\pi\sqrt{\phi'(t)\phi'(s)}}{\sin\Big(\pi\big|\phi(t)-\phi(s)\big|\Big)},
\end{equation}
can be nicely expressed in terms of~\eqref{eqDefFaltern},
\begin{equation}
\obs{\phi}{s}{t}
=
\beta\obs{\tfrac{\beta}{\pi} \tan\big( \tfrac{\pi}{\beta} \, \Emb{\beta}_{\phi}\big)}{s \beta}{t \beta}_0.
\end{equation}
This fits well with the fact that observables $\obs{f}{\cdot}{\cdot}_0$ are precisely the natural observables for the Schwarzian Theory on the real line. 
Importantly, we can explicitly calculate correlation functions of $\obs{f}{\cdot}{\cdot}_0$ for the Schwarzian Theory on the real line, prove the corresponding uniqueness theorem, and, using them, formulate an analogue of the H\"{o}lder Property (see Section~\ref{sectSchwLineIntro} for the details). 

\medskip

However, working with the map \eqref{eqDefFaltern} also introduces some technical difficulties.
Crucially, the right-hand side of \eqref{eqDefFaltern} is discontinuous (and not even c\`{a}dl\`{a}g).
Thus, much greater care is needed when choosing the space of convergence for measures in Theorem~\ref{thrLocalLimit}.

One way to address this problem is to regularise the right-hand side of~\eqref{eqDefFaltern} to make it a $C^1(\R)$ function.
In Section~\ref{sectThrLocalProof}, where we prove Theorem~\ref{thrLocalLimit}, we introduce the map $\phi\mapsto \EmbTan{\beta}_{\phi}$, where function $\EmbTan{\beta}_{\phi}(t)$ coincides with the right-hand side of~\eqref{eqDefFaltern} for $t \in [-\beta/3, \beta/3]$ and is continued as a linear function for $t \notin [-\beta/3, \beta/3]$.
Thus, $\EmbTan{\beta}_{\phi}$ benefits from the properties described above on $[-\beta/3, \beta/3]$, but also has the advantage of being a $C^1(\R)$ function.

\paragraph{Proof sketch.}
The main part of the Theorem~\ref{thrLocalLimit} proof is the demonstration of the fact that for $\EmbTan{\beta}$, described above and spelled out again in \eqref{eqDefFTan}, we have that normalised push-forwards $\PartF_{\sigma^2}^{-1}\cdot \d\EmbTan{\beta}_{\sharp}\FMeas{\sigma^2}$ converge weakly to $\d\FMInf{\varkappa^2}$.

In order to do this, we first show that Schwarzian Measure on the line $\d\FMInf{\varkappa^2}$ is characterised by its correlation functions (see Theorem~\ref{thrUniqLineSchw}) and that correlation functions of $\PartF_{\sigma^2}^{-1}\cdot \d\EmbTan{\beta}_{\sharp}\FMeas{\sigma^2}$ converge to those of $\d\FMInf{\varkappa^2}$ (see Proposition~\ref{prpCorrConv}).

Secondly, we show that measures $\PartF_{\sigma^2}^{-1}\cdot \d\EmbTan{\beta}_{\sharp}\FMeas{\sigma^2}$ are tight.
In Theorem~\ref{thrLocObsHolderLine} we show that local H\"{o}lder Property for functions $f\in C^1(\R)$ can be characterised in terms of observables $\obs{f}{\cdot}{\cdot}_0$.
Therefore, we can find large compact sets $\cmpct\subset \CZPlus(\R)$ that are characterised in terms of observables $\obs{f}{\cdot}{\cdot}_0$, see Corollary~\ref{crlCpmctFromObs}.
After that we demonstrate that conditions of this characterisation are satisfied with probability close to $1$, see Proposition~\ref{prpTightnessLoc}.

Thus, we conclude that, indeed, probability measures $\PartF_{\sigma^2}^{-1}\cdot \d\EmbTan{\beta}_{\sharp}\FMeas{\sigma^2}$ converge weakly to $\d\FMInf{\varkappa^2}$.

Finally, in Section~\ref{sectThrLocalProofFinal} we show that limits of $\PartF_{\sigma^2}^{-1}\cdot \d\EmbTan{\beta}_{\sharp}\FMeas{\sigma^2}$ and $\PartF_{\sigma^2}^{-1}\cdot \d\Emb{\beta}_{\sharp}\FMeas{\sigma^2}$ coincide.

\subsubsection{Ideas of the Theorem~\ref{thrGlobalConvJumpBound} proof}
The first observation is that various changes (including the presence or absence of jumps) of $\phi\in \Diff^1(\T)$ can be encoded via cross-ratios
\begin{equation}
\cross{\phi}{t_1}{t_2}{t_3}{t_4}
=
\frac{\obs{\phi}{t_1}{t_3} \, \obs{\phi}{t_2}{t_4}}{\obs{\phi}{t_1}{t_2} \, \obs{\phi}{t_3}{t_4}}
=
\frac{\sin\Big(\pi\big[\phi(t_2)-\phi(t_1)\big]\Big) \, \sin\Big(\pi\big[\phi(t_4)-\phi(t_3)\big]\Big)}
{\sin\Big(\pi\big[\phi(t_3)-\phi(t_1)\big]\Big) \, \sin\Big(\pi\big[\phi(t_4)-\phi(t_2)\big]\Big)}.
\end{equation} 
Indeed, in Proposition~\ref{prpGlobalProofObsJump} we show that we always have that $\cross{\phi}{t_1}{t_2}{t_3}{t_4} \in (0, 1)$, and that if $\phi$ increases on all $4$ intervals $\{(t_j, t_{j+1})\}_{j=1}^4$ then $\cross{\phi}{t_1}{t_2}{t_3}{t_4}$ is bounded away from $0$ and $1$.
Thus, in order to show that for large $\sigma$ probability measures $\PartF_{\sigma^2}^{-1} \cdot \d\FMeas{\sigma^2}$ concentrate on jump processes with no more than $3$ jumps, it is sufficient to show that 
\begin{equation}\label{eqIdeasProofCrossRatio1}
\lim_{\sigma\to \infty} 
\left[
\PartF_{\sigma^2}^{-1} \cdot 
\int_{\Diff^1(\T)/\SL(2, \R)}
\cross{\phi}{t_1}{t_2}{t_3}{t_4}
\left(1- \cross{\phi}{t_1}{t_2}{t_3}{t_4}\right) \d\FMeas{\sigma^2}(\phi) 
\right]
= 0
\end{equation}

In Lemma~\ref{lmmGlobalUniqFuncEqual} we show that we can relate expectations of functions of observables $\obs{\phi}{\cdot}{\cdot}$ with respect to $\d\FMeas{\sigma^2}(\phi)$ to expectations of functions of $\obs{\B_{\xi}}{\cdot}{\cdot}_0$ with respect to Brownian Bridges (see Section~\ref{sect_meas_construct} for the Brownian Bridge definition).
Thus, we reduce \eqref{eqIdeasProofCrossRatio1} to the calculation of cross-ratio $\sigma\to \infty$ limits for the Brownian Bridges measures.
The corresponding Brownian Bridge calculation is carried out in Lemma~\ref{lmmCrossBBConv}.
Combining Lemma~\ref{lmmGlobalUniqFuncEqual} and Lemma~\ref{lmmCrossBBConv} we obtain
\eqref{eqIdeasProofCrossRatio1}, finishing the proof.

\subsubsection{Ideas of the Theorem~\ref{thrGlobalConvJumpPossible} proof}

First of all, we prove the Theorem for $k=3$.
We fix the gauge $\phi(0)=0$, $\phi(1/3)=1/3$, $\phi(2/3) = 2/3$.
This already ensures that there are $3$ jumps.
It remains to show that these jumps are not near the points $0, 1/3, 2/3$.
We do this by demonstrating that cross-ratios $\cross{\phi}{t_1}{t_2}{t_3}{t_4}$ are close to $0$ whenever $t_3$ and $t_4$ are close, and applying this to $0, 1/3, 2/3$ and points near them.
We prove that $\cross{\phi}{t_1}{t_2}{t_3}{t_4}$ are small by combining Lemma~\ref{lmmGlobalUniqFuncEqual} and Lemma~\ref{lmmCrossBBConv}.

The result for $k=1, 2$ follows from the $k=3$ case by changing the gauge so that $1$ or $2$ of the jumps disappear.

\subsection{Organisation of the paper and notations}
\label{sectNotation}
We recall all the necessary mathematical results concerning Schwarzian Field Theory on the circle in Section~\ref{sect_meas_construct}.

In Section~\ref{sectSchwLineProofs} we prove the main results about the Schwarzian Measure on the real line, which we stated in Section~\ref{sectSchwLineIntro}.

We prove Theorem~\ref{thrLocalLimit} in Section~\ref{sectThrLocalProof}.
In Section~\ref{sectThrLocalProofCharacter} we describe the limit of $\PartF_{\sigma^2}^{-1}\cdot \d\EmbTan{\beta}_{\sharp}\FMeas{\sigma^2}$ by its correlation functions.
In Section~\ref{sectThrLocalProofTight} we show that measures $\PartF_{\sigma^2}^{-1}\cdot \d\EmbTan{\beta}_{\sharp}\FMeas{\sigma^2}$ are tight.
In Section~\ref{sectThrLocalProofFinal} we show that limits of $\PartF_{\sigma^2}^{-1}\cdot \d\EmbTan{\beta}_{\sharp}\FMeas{\sigma^2}$ and $\PartF_{\sigma^2}^{-1}\cdot \d\Emb{\beta}_{\sharp}\FMeas{\sigma^2}$ coincide, finishing the proof.

We prove Theorem~\ref{thrGlobalConvJumpBound} and Theorem~\ref{thrGlobalConvJumpPossible} in Section~\ref{sectThrGlobProof}.

\medskip

Throughout the paper we use the following notations:

\begin{enumerate}

\item The unit circle is denoted by $\T=[0,1]/\{0\sim 1\}$, the nonnegative real numbers are denoted by $\R_+ = [0, \infty)$, and for the open disk in the complex plane of radius $r$ we use $\D_r = \left\{z\in \Compl: |z|<r\right\}$.
Moreover, for $s, t\in \T$, use $(s, t)$ to denote the interval going from $s$ to $t$ in positive direction.
We write $t-s$ for the length of the interval $(s, t)$.
In particular, $t-s\in [0,1)$.

\item We use $\Diff^k(\T)$ for the set of orientation-preserving $C^k$-diffeomorphisms of $\T$, that is, $\phi\in \Diff^k(\T)$
can be identified with a $k$-times continuously differentiable function $\phi\colon\R\to \R$ satisfying
$\phi(\tau+1)=\phi(\tau)+1$ and $\phi'(\tau)>0$ for all $\tau \in \R$. Note that $\Diff^k(\T)$ is not a linear space.
The topology on $\Diff^k(\T)$ is inherited from the natural topology on $C^k(\T)$.
It turns $\Diff^{k}(\T)$ into a Polish (separable completely metrisable) space as well as a topological group.

\item We use $\SL(2, \R)$ to denote the group of conformal isomorphisms of the unit disk restricted to the boundary, which is identified with the unit circle $\T$.
Throughout the paper we consider the action of $\SL(2, \R)$ on $\Diff^1(\T)$ given by post-compositions.

\item We will often abuse the notation and for $\phi\in \Diff^1(\T)$ denote its conjugacy class in $\Diff^1(\T)/\SL(2,\R)$ by the same symbol $\phi$.
When this convention may lead to ambiguity, we will instead write $[\phi]\in \Diff^1(\T)/\SL(2,\R)$ to denote the conjugacy class explicitly.

\item In some places we will encounter expressions of the form $f\big(\arccosh[z]\big)$, for various even analytic functions $f$.
Even though $\arccosh[z]$ is not analytic at $z=0$, the composition $f\big(\arccosh[z]\big)$ still defines an analytic function around $z=0$.
More precisely, we identify $f\big(\arccosh[z]\big)$ with $\widetilde{f}\big(\arccosh^2[z]\big)$, where $\widetilde{f}$ is an analytic function such that $\widetilde{f}(\omega) = f(\sqrt{\omega})$, and  $\arccosh^2[z]$ is the analytic function described in the statement below (see \citep{LosevCorr} for details).
\begin{stm}[\citep{LosevCorr}]\label{stmArccoshDef}
Function $\arccosh^2(z)$ can be analytically continued from $z\in [1, \infty)$ to $z \in \Dom := \left\{z\in\Compl \, \Big| \, \Real z\geq -1 \right\}$. 
Moreover, in this continuation $\arccosh^2(z) = -\arccos^2(z)$ for $z \in (-1, 1)$.
\end{stm}
\end{enumerate}

\section{Schwarzian Measure on the circle}\label{sect_meas_construct}

\subsection{Definitions and partition function}
In this subsection we recall the rigorous construction and main properties of the measure corresponding to Schwarzian Field Theory from \citep*{BLW}, which are based on the plan from \citep{BelokurovShavgulidzeExactSolutionSchwarz, BelokurovShavgulidzeCorrelationFunctionsSchwarz}. 
We refer the reader to that paper for proofs and further details.
\medskip

The construction of the Schwarzian measure is based on the appropriate reparametrisation of an unnormalised version of the Brownian bridge measure.
This is a finite measure on $\Cfree[0,T] = \left\{f\in C[0,T]\, | \, f(0)=0\right\}$ formally corresponding to
\begin{equation} \label{e:BB-formaldensity}
\d\WS{\sigma^2}{a}{T}(\xi) = \exp\left\{-\frac{1}{2\sigma^2}\int_{0}^{T}\xi'^{\, 2} (t)\d t\right\} \delta\big(\xi(0)\big) \delta\big(\xi(T)-a\big)\prod_{\tau \in (0,T)}\d\xi(\tau).
\end{equation}

\begin{defn} \label{defn:BB}
  The unnormalised Brownian bridge measure with variance $\sigma^2 > 0$ is a finite Borel measure $\d\WS{\sigma^2}{a}{T}$ on $\Cfree[0, T]$ such that
  \begin{equation}\label{eq:25}
    \sqrt{2\pi T}\sigma \, \exp\left\{\frac{a^2}{2 T\sigma^2}\right\}\d\WS{\sigma^2}{a}{T}(\xi)
  \end{equation}
  is the distribution of a Brownian bridge $\big(\xi(t)\big)_{t\in[0,T]}$ with variance $\sigma^2$ and $\xi(0) = 0$, $\xi(T) = a$.
\end{defn}

In order to define the Schwarzian measure $\FMeas{\sigma^2}$,
we first need to define a finite measure $\mu_{\sigma^2}$ on $\Diff^1(\T)$ which is similar to what is known as the Malliavin--Shavgulidze measure;
see \cite[Section~11.5]{BogachevMalliavin}. %
Formally, this measure corresponds to
\begin{equation} \label{e:MeasureMuFormal}
\d\mu_{\sigma^2}(\phi) 
= \exp\left\{-\frac{1}{2\sigma^2}\int_{0}^{1}\left(\frac{\phi''(\tau)}{\phi'(\tau)}\right)^2\d\tau\right\} \prod_{\tau \in [0,1)}\frac{\d\phi(\tau)}{\phi'(\tau)}.
\end{equation}

We can make sense of this measure by defining it as a push-forward of an unnormalised Brownian bridge on $[0,1]$ with respect to a suitable change of variables.
We \emph{define} $\mu_{\sigma^2}$ by
\begin{equation}\label{defMeasureMu}
\d\mu_{\sigma^2}(\phi) \coloneqq 
\d \WS{\sigma^2}{0}{1}(\xi)
\otimes \d\Theta, \qquad \text{with } \phi(t) = \Theta + \A_{\xi}(t)\enspace (\mathrm{mod}\, 1), \text{ for } \Theta\in [0, 1),
\end{equation}
where $\d\Theta$ is the Lebesgue measure on $[0,1)$ and
\begin{equation} \label{defP}
\A(\xi)(t) \coloneqq \A_{\xi}(t) 
\coloneqq \frac{\int_{0}^t e^{\xi(\tau)}\d\tau}{\int_{0}^1 e^{\xi(\tau)}\d\tau}.
\end{equation}
The variable $\Theta$ corresponds to the value of $\phi(0)$.
Note that the map $\xi \mapsto \A(\xi)$ is a bijection between $\Cfree[0, 1]$
and $\Diff^1 [0,1]$ with inverse map
\begin{align} 
\A^{-1}: \Diff^1[0,1] &\to \Cfree[0, 1]\\
\varphi &\mapsto \log \varphi'(\cdot) - \log \varphi'(0).
\end{align}
Note that, with our choice of $C^1$ topology on $\Diff^1[0,1]$ and the usual supremum norm topology on $\Cfree[0, 1]$, we get that both $\A$ and $\A^{-1}$ are continuous.

In view of \eqref{eq:1} and \eqref{e:MeasureMuFormal},
the unquotiented Schwarzian measure is constructed as
\begin{equation} \label{defMeasureMeas}
\d\FMeasSL{\sigma^2}(\phi) = \exp\left\{ \frac{2\pi^2}{\sigma^2}\int_{0}^{1} \phi'^{\, 2}(\tau)\d\tau\right\}\d \mu_{\sigma^2}(\phi).
\end{equation}
Since $\mu_{\sigma^2}$ is supported on $\Diff^1(\T)$, this defines a Borel measure on $\Diff^1(\T)$. 
This is the unique (up to a multiplicative constant) measure satisfying a natural change of variables formula.
In particular, it is invariant under the action of $\SL(2,\R)$.
\begin{prp}\label{lemmaMeasureSLInv}
The measure $\FMeasSL{\sigma^2}$ is invariant under post-composition by elements of $\SL(2, \R)$. 
In other words, for any $\psi \in \SL(2, \R)$ and Borel $A\subset \Diff^1(\T)$ we have
\begin{equation}
  \FMeasSL{\sigma^2}\big(\psi\circ A\big) = \FMeasSL{\sigma^2}\big(A\big),
\end{equation}
where  $\psi\circ A \coloneqq \left\{\psi\circ \phi\, \big | \, \phi \in A \right\}$.
\end{prp}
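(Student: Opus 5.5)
The plan is to reduce $\SL(2,\R)$-invariance to a quasi-invariance (Cameron--Martin/Girsanov type) formula for the measure $\mu_{\sigma^2}$ under smooth diffeomorphisms. Post-composition by $\psi$ acts on $\phi$, but the Schwarzian density is built from $\xi=\log\phi'$. Under $\phi\mapsto\psi\circ\phi$ we have
\begin{equation*}
\log(\psi\circ\phi)'=\log\psi'(\phi)+\log\phi',
\end{equation*}
so in the variable $\xi$ the map is a nonlinear, state-dependent shift by $h_\phi=\log\psi'\circ\phi$, which is smooth in $\tau$ but depends on the whole path. The key steps, in order, are as follows.

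First, I would establish the formula
\begin{equation*}
\frac{\d(\psi_\sharp\mu_{\sigma^2})}{\d\mu_{\sigma^2}}(\phi)=\exp\Big\{\frac{1}{\sigma^2}\int_0^1\Schw(\psi^{-1},\phi(\tau))\,\phi'(\tau)^2\,\d\tau\Big\}
\end{equation*}
for $\psi\in\Diff^3(\T)$, up to sign conventions. This is the Shavgulidze quasi-invariance formula. I would derive it by discretising: write $\phi$ through $\A_\xi$ and apply Girsanov to the Brownian bridge with the adapted drift. The bridge and periodicity constraints, together with the $\Theta$ variable, are exactly absorbed by the factor $\prod\d\phi/\phi'$; formally, that factor is invariant because $\d(\psi\circ\phi)/(\psi\circ\phi)'=\d\phi/\phi'$.

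Alternatively, I would verify the formula on a generating class: Möbius maps near the identity $\psi_s=\exp(sX)$ with $X\in\mathfrak{sl}_2$. I would differentiate $\int F(\psi_s\circ\phi)\,\d\FMeasSL{\sigma^2}$ in $s$ and use Malliavin integration by parts on the bridge to show the derivative vanishes. This requires computing the divergence of the vector field $\phi\mapsto X\circ\phi$ with respect to $\mu_{\sigma^2}$.

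Second, I would combine this with the chain rule for the Schwarzian,
\begin{equation*}
\Schw(\psi\circ\phi)=\Schw(\psi)\circ\phi\cdot\phi'^2+\Schw(\phi).
\end{equation*}
For the circle, Möbius maps $\psi$ viewed on $\T$ satisfy $\Schw(\psi,\cdot)=2\pi^2(1-\psi'^2)$, since they preserve $\Schw(e^{2\pi i\,\cdot})$. The Radon--Nikodym factor then becomes
\begin{equation*}
\exp\Big\{\frac{2\pi^2}{\sigma^2}\int\big(\phi'^2-(\psi\circ\phi)'^2\big)\,\d\tau\Big\},
\end{equation*}
which exactly cancels the change of the weight $\exp\{\frac{2\pi^2}{\sigma^2}\int\phi'^2\}$ in \eqref{defMeasureMeas}. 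This yields $\FMeasSL{\sigma^2}(\psi\circ A)=\FMeasSL{\sigma^2}(A)$ for all Borel $A$, by a monotone class argument, since both sides are finite measures.

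The main obstacle is the rigorous quasi-invariance step. The shift $h_\phi$ depends on the path, and the normalisation in $\A_\xi$ is nonlocal, so naive Girsanov does not apply directly. I would handle it either by a finite-dimensional approximation of the bridge, proving the identity for piecewise-linear $\xi$ with explicit Jacobians and passing to the limit by dominated convergence (using exponential integrability of $\int\phi'^2$), or through the infinitesimal Malliavin route for $\mathfrak{sl}_2$ generators and then integrating the flow.
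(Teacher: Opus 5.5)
Your plan follows the route the paper relies on: the proposition is quoted from \citep{BLW}, where it is the case $\psi\in\SL(2,\R)$ of the change-of-variables formula \eqref{eqDiffeoMeasureChange}. That formula is Shavgulidze quasi-invariance of $\mu_{\sigma^2}$ under $\Diff^3(\T)$ combined with the weight in \eqref{defMeasureMeas}, and its exponent vanishes because $\Schw(\tan(\pi\psi),\cdot)=2\pi^2$, i.e.\ $\Schw(\psi,\cdot)=2\pi^2(1-\psi'^2)$, which is exactly your cancellation; the quasi-invariance step you only sketch is likewise taken from \citep{BLW} rather than reproved.

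One correction: $\FMeasSL{\sigma^2}$ is an infinite measure, since a nonzero finite measure cannot be invariant under the proper action of the non-compact group $\SL(2,\R)$. So $\exp\{\frac{2\pi^2}{\sigma^2}\int_0^1\phi'(\tau)^2\,\d\tau\}$ is not $\mu_{\sigma^2}$-integrable, and any domination in your Step 1 must use a strictly smaller constant, as the M\"{o}bius exponent $\frac{2\pi^2}{\sigma^2}\int_0^1\big(1-\psi'(\phi(\tau))^2\big)\phi'(\tau)^2\,\d\tau$ does. Likewise, drop the monotone-class remark and conclude $\FMeasSL{\sigma^2}(\psi\circ A)=\FMeasSL{\sigma^2}(A)$ for every Borel $A$ directly from the pointwise cancellation of densities, which holds for nonnegative integrands without any finiteness.
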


Since $\mu_{\sigma^2}$ is supported on $\Diff^1(\T)$, this defines a Borel measure on $\Diff^1(\T)$, 
which turns out to be the unique measure satisfying the natural change of variables formula.

\begin{thr_old}[Theorem~1, \citep*{BLW}]
The measure $\FMeasSL{\sigma^2}$ is the unique (up to a multiplicative constant) $\SL(2,\R)$-invariant Borel measure supported on $\Diff^1(\T)$
  that satisfies the expected change of variables formula
  \begin{equation}  \label{eqDiffeoMeasureChange}
    \frac{\d \psi^{\ast}\!\FMeasSL{\sigma^2}(\phi)}{\d\FMeasSL{\sigma^2}(\phi)} =
    \frac{\d \FMeasSL{\sigma^2}(\psi\circ \phi)}{\d\FMeasSL{\sigma^2}(\phi)} = 
    \exp\left\{
      \frac{1}{\sigma^2}\int_{\T}\Big[ \Schw(\tan(\pi\psi),\phi(\tau))-2\pi^2 \Big]\, \phi'(\tau)^2 \d \tau \right\}
    ,
  \end{equation}
    for  any $\psi\in \Diff^3(\T)$, and has a quotient $\FMeas{\sigma^2} = \FMeasSL{\sigma^{2}}/\SL(2,\R)$ that is a finite Borel measure on $\Diff^1(\T)/\SL(2,\R)$.
\end{thr_old}

\begin{defn}\label{defn_schwarzian}
  The Schwarzian measure is given by $\FMeas{\sigma^2}$.
\end{defn}
This measure is finite, and moreover, its total mass can be computed explicitly.

\begin{prp}\label{prpMainPartFunct}
The partition function (i.e., total mass) of $\FMeas{\sigma^2}$ is given by
\begin{equation}
\PartF(\sigma^2) =  
    \left(\frac{2\pi}{\sigma^2}\right)^{3/2} \exp\left(\frac{2\pi^2}{\sigma^2}\right)
    =\int_0^{\infty} e^{-{\sigma^2k^2}/{2}} \sinh(2\pi k) \, 2 k \d k.
\end{equation}
\end{prp}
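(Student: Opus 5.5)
The second equality in Proposition~\ref{prpMainPartFunct} is an elementary Gaussian computation, so I would dispose of it first. Since $k\mapsto k e^{-\sigma^2k^2/2}$ is odd, we have
\begin{equation*}
\int_0^{\infty} e^{-\sigma^2k^2/2}\sinh(2\pi k)\,2k\d k=\int_{-\infty}^{\infty} k\, e^{-\sigma^2k^2/2+2\pi k}\d k .
\end{equation*}
Completing the square, the exponent equals $-\tfrac{\sigma^2}{2}(k-2\pi/\sigma^2)^2+2\pi^2/\sigma^2$. Hence the right-hand side is the Gaussian mass $\sqrt{2\pi/\sigma^2}$ times the mean $2\pi/\sigma^2$ times $e^{2\pi^2/\sigma^2}$, that is, $(2\pi/\sigma^2)^{3/2}e^{2\pi^2/\sigma^2}$.

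The real content is the first equality, namely the total mass of the quotient $\FMeas{\sigma^2}=\FMeasSL{\sigma^2}/\SL(2,\R)$. I would use the Faddeev--Popov idea. Fix a Haar measure on $\SL(2,\R)$. Choose a nonnegative Borel function $F$ on $\Diff^1(\T)$ with $\int_{\SL(2,\R)}F(g\circ\phi)\d g=1$ for every $\phi$. Proposition~\ref{lemmaMeasureSLInv} then gives $\PartF(\sigma^2)=\int F\d\FMeasSL{\sigma^2}$. A convenient choice localises the three group parameters on the three quantities $\phi(0)=\Theta$, $\phi'(0)$ and a third value such as $\phi(1/2)$, weighted by the Jacobian of the orbit map $g\mapsto (g\circ\phi)(0),(g\circ\phi)'(0),(g\circ\phi)(1/2)$. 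In terms of $\tan(\pi\phi)$ this Jacobian is an explicit algebraic expression in these values.

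Through \eqref{defMeasureMu} and \eqref{defMeasureMeas}, this turns $\PartF(\sigma^2)$ into an expectation over the unnormalised Brownian bridge $\WS{\sigma^2}{0}{1}$. The integrand is the weight $\exp\{\frac{2\pi^2}{\sigma^2}\int_0^1 e^{2\xi}/(\int_0^1 e^{\xi})^2\}$ times the gauge factor, and the integration over $\Theta$ and the $\phi'(0)$ direction is absorbed by the Haar normalisation. The cleanest way to evaluate this expectation is not to fix the gauge on $[0,1]$ directly. Instead, I would use the change of variables formula \eqref{eqDiffeoMeasureChange} with $\psi$ chosen so that the bridge becomes, after passing to $\tan(\pi\phi)$, a Brownian motion functional of the type $\B_\xi$. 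The remaining quantity is then a joint law of $(\xi(T),\int_0^T e^{\xi})$, i.e.\ an exponential functional of Brownian motion. For this I would invoke the Matsumoto--Yor/Bougerol description: the joint density is the hyperbolic-plane heat kernel, whose spectral (Kontorovich--Lebedev) expansion has exactly the Plancherel density $k\sinh(2\pi k)$ and Gaussian factor $e^{-\sigma^2k^2/2}$. This produces the integral representation directly, which matches the closed form by the computation above.

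I expect the main obstacle to be the rigorous justification of the gauge-fixing step. One must show that the weight $F$ is measurable and that its orbit integral is genuinely $1$, which requires the $\SL(2,\R)$-action on $\Diff^1(\T)$ to be free and proper with a computable Jacobian. One must also keep the exponentially large factor $e^{2\pi^2\int\phi'^2/\sigma^2}$ integrable, which would otherwise blow up along noncompact directions of the group. I would first try to handle both points by working with the explicit Jacobian in $\tan(\pi\phi)$ coordinates and by checking finiteness through the heat-kernel bound before identifying constants.
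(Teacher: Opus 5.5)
Your computation of the second equality is correct. The first equality carries the actual content, and your outline does not establish it. The paper does not prove it either: it imports it from \citep{BLW}. It is also exactly the $N=0$ case of Theorem~\ref{thrMainCorrelations_corr}. So tools derived later in this paper cannot be used without circularity, for instance the kernel $h_T$ of Lemma~\ref{lmmGlobalUniqFuncEqual}, which is obtained from that theorem.

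The step of your sketch that concretely fails is the reduction to a single exponential functional. Formula \eqref{eqDiffeoMeasureChange} only involves $\psi\in\Diff^3(\T)$. Hence $\psi\circ\phi$ is still a degree-one circle map, and $\tan(\pi\,\psi\circ\phi)$ still has a pole at some $t_*\in\T$. Near that pole $\log\big(\tan(\pi\phi)\big)'\sim -2\log|t-t_*|$. So the representation $\tan(\pi\phi)=\B_\xi$ with Brownian $\xi$ makes sense only on an arc $[0,T]$ that avoids the pole. The complementary arc, which contains the pole, is where the weight $\sinh(2\pi k)\,2k\,e^{-(1-T)\sigma^2k^2/2}$ comes from. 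Your sketch gives no mechanism for it, so the joint law of $(\xi(T),\int_0^Te^{\xi})$ does not by itself account for the circle.

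The gauge-fixing paragraph has a similar problem. In \eqref{defMeasureMu} only $\Theta$ is a free coordinate. Both $\phi'(0)=(\int_0^1e^{\xi})^{-1}$ and $\phi(1/2)-\Theta$ are functionals of the bridge, so nothing besides $\Theta$ is ``absorbed by the Haar normalisation''. The two non-compact directions of $\SL(2,\R)$ act on $\xi$ nonlinearly. Your Faddeev--Popov weight must therefore be integrated against $\WS{\sigma^2}{0}{1}$ together with the growing factor $\exp\{\frac{2\pi^2}{\sigma^2}\int_0^1\phi'^2\}$. That integral, including its finiteness, is the whole difficulty, and the proposal only names it.

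Two further points:
\begin{itemize}
\item The constant $(2\pi/\sigma^2)^{3/2}$ presupposes the specific normalisation of Haar measure used to define $\FMeasSL{\sigma^2}/\SL(2,\R)$ in \citep{BLW}. Writing ``fix a Haar measure'' leaves the claimed value undetermined.
\item The final spectral identification is asserted rather than derived, and attributed to the wrong source. The Plancherel density of the hyperbolic plane is $k\tanh(\pi k)$. By contrast, $k\sinh(2\pi k)$ is the Kontorovich--Lebedev measure for $K_{2ik}$, i.e.\ the spectral measure of the Liouville operator $\tfrac{\sigma^2}{2}\partial_x^2-\lambda e^{x}$ that governs $\int_0^Te^{\xi}$ jointly with $\xi(T)$. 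Passing from that interval object to the circle is precisely the gluing across the pole that your sketch omits.
\end{itemize}
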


\subsection{Observables}
Here we recall the main properties of cross-ratio observables
\begin{equation}\label{eqDefObs_intro}
\obs{\phi}{s}{t} = \frac{\pi\sqrt{\phi'(t)\phi'(s)}}{\sin\Big(\pi\big|\phi(t)-\phi(s)\big|\Big)},
 \qquad s,t\in \T,
\end{equation}  
obtained in \citep{LosevCorr}.

First of all, these expressions indeed define observables on the space $\Diff^1(\T)/\SL(2, \R)$, since they depend only on the conjugacy class under $\SL(2, \R)$ action.
\begin{prp}\label{prpObsSLInvar}
Observables $\obs{\phi}{s}{t}$ are invariant under post-compositions by M\"{o}bius transformations. 
In other words, if $\psi\in \SL(2, \R)$, then
\begin{equation}
\obs{\psi\circ\phi}{s}{t} = \obs{\phi}{s}{t}.
\end{equation}
In particular, they induce well-defined observables on $\Diff^1(\T)/\SL(2, \R)$, which we, slightly abusing the notation, also denote by $ \obs{\phi}{\cdot}{\cdot}$.
\end{prp}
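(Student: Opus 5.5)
Because the observable only sees $\phi$ through $\tan(\pi\phi)$, I will transfer the problem to fractional linear maps on the real line, where invariance of $\sqrt{f'(s)f'(t)}/|f(t)-f(s)|$ is a direct computation.

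The first step is the identity
\begin{equation*}
\frac{\pi^2\,\phi'(s)\phi'(t)}{\sin^2\!\big(\pi(\phi(t)-\phi(s))\big)}
=
\frac{F'(s)F'(t)}{\big(F(t)-F(s)\big)^2},
\qquad F=\tan(\pi\phi).
\end{equation*}
It holds wherever $\cos(\pi\phi(s))\cos(\pi\phi(t))\neq 0$. To verify it, write $F'=\pi\phi'/\cos^2(\pi\phi)$ and use
\begin{equation*}
\tan a-\tan b=\frac{\sin(a-b)}{\cos a\cos b}.
\end{equation*}
The cosine factors cancel exactly, and taking square roots recovers $\obs{\phi}{s}{t}$. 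The absolute value is harmless, since $\sin(\pi|x|)=|\sin(\pi x)|$ for $|x|<1$ and all quantities are positive.

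Second, I use the standard description of $\SL(2,\R)$ in these coordinates. Every $\psi\in\SL(2,\R)$ acting on $\T$ (identified with the boundary of the disk, and then with $\R\cup\{\infty\}$ via $\theta\mapsto\tan(\pi\theta)$ after a Cayley transform) satisfies
\begin{equation*}
\tan(\pi\psi(\theta))=\frac{a\tan(\pi\theta)+b}{c\tan(\pi\theta)+d},\qquad ad-bc=1.
\end{equation*}
I take this as the paper's convention, consistent with the remark in Section~\ref{sectProofStrat}. Hence $\tan(\pi(\psi\circ\phi))=M\circ F$ with $M$ fractional linear.

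Third, for $M(x)=(ax+b)/(cx+d)$ one has $M'(x)=(cx+d)^{-2}$ and
\begin{equation*}
M(x)-M(y)=\frac{x-y}{(cx+d)(cy+d)}.
\end{equation*}
Consequently
\begin{equation*}
\frac{(M\circ F)'(s)\,(M\circ F)'(t)}{\big(M(F(t))-M(F(s))\big)^2}=\frac{F'(s)F'(t)}{\big(F(t)-F(s)\big)^2}.
\end{equation*}
Combining this with the first step gives $\obs{\psi\circ\phi}{s}{t}=\obs{\phi}{s}{t}$.

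The only real obstacle is the exceptional set where $\phi$ or $\psi\circ\phi$ hits a pole of $\tan$, i.e. where $\cos(\pi\phi)=0$ or $c\,F+d=0$. I would handle it by continuity or a rotation trick. Pre-rotations $R_\theta(x)=x+\theta$ leave $\obs{\cdot}{s}{t}$ unchanged, because the formula depends only on $\phi'$ and differences of $\phi$. So we may replace $\phi$ by $R_\theta\circ\phi$ and $\psi$ by $\psi\circ R_{-\theta}$, choosing $\theta$ so that all four points $\phi(s),\phi(t),\psi\phi(s),\psi\phi(t)$ avoid $1/2+\Z$.

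Alternatively, both sides are continuous in $(s,t)$ off the diagonal, so the identity extends from the dense set where the poles are avoided. Well-definedness on the quotient is then immediate.
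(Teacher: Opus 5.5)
Your proof is correct, and it follows the route the paper relies on. The paper only quotes this statement from \citep{LosevCorr}, but Section~\ref{sectProofStrat} and Remark~\ref{rmrkSLLineAction} rest on your two facts: $\SL(2,\R)$ acts by fractional linear maps on $\tan(\pi\phi)$, and $\obs{f}{s}{t}_0$ is invariant under those maps.

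There is one small slip in the rotation trick. Replacing $\phi$ by $R_\theta\circ\phi$ and $\psi$ by $\psi\circ R_{-\theta}$ leaves $\psi\circ\phi$ unchanged, so it cannot move $\psi\phi(s),\psi\phi(t)$ off $\tfrac12+\Z$; you would also need to post-compose with a second rotation. Your continuity argument already handles the exceptional set correctly, so the proof stands.
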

Importantly, it is possible to compute correlation functions of cross-ratios \citep{LosevCorr}.
We use the same diagrammatic language, as in Section~\ref{sectSchwLineIntro}. 
The only difference is that now we replace the line with the circle and semi-circular chords with linear chords (see~\citep{LosevCorr} for more details).

\begin{thr_old} \label{thrMainCorrelations_corr}
For $N\geq 0$ let $\big\{\obs{\phi}{s_{j}}{t_j}\big\}_{j=1}^{N}$ be a set of non-interlaced observables, and $\{l_j\}_{j=1}^N$ be a set of positive integers. 
Let also $\{k_m\}_{m=1}^{N+1}$ and $\{\tau_m\}_{m=1}^{N+1}$ be as above.
Then 
\begin{multline}
\int\limits_{{\Diff^1(\T)/\SL(2, \R)}} \prod_{j=1}^N \obs{\phi}{s_j}{t_j}^{l_j}\d\FMeas{\sigma^2}(\phi)
=
\int_{\R_+^{N+1}} 
\prod_{j=1}^N \frac{\Gamma\left(\dfrac{l_j}{2}\pm i w_1(j) \pm i w_2(j)\right)}{2\pi^2 \, \Gamma(l_j)}
\cdot\left(\frac{\sigma^2}{2}\right)^{l_j}\\
\times
\prod_{m=1}^{N+1}\exp\left(-\frac{\tau_m \sigma^2}{2}\cdot k_m^2\right)  \sinh(2\pi k_m) \, 2 k_m \d k_m,
\end{multline}
where the integral on the right-hand side converges absolutely.
\end{thr_old}

Using this, it is possible to show that cross-ratios have finite exponential moments.
\begin{prp}\label{prpExpMomentCirc_corr}
For any $\sigma>0$ and any $s\neq t \in \T$, 
\begin{equation}
\int  \exp\left\{ \frac{8}{\sigma^2}\, \obs{\phi}{s}{t} \right\} \d\FMeas{\sigma^2}(\phi) < \infty.
\end{equation}
\end{prp}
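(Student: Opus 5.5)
The statement to prove is Proposition~\ref{prpExpMomentCirc_corr}, the finite exponential moment bound for a single observable on the circle. I will prove it by expanding the exponential as a power series in $\obs{\phi}{s}{t}$ and computing each moment with Theorem~\ref{thrMainCorrelations_corr} for $N=1$. Since $\obs{\phi}{s}{t}>0$, monotone convergence gives
\begin{equation*}
\int \exp\left\{\tfrac{8}{\sigma^2}\obs{\phi}{s}{t}\right\}\d\FMeas{\sigma^2}(\phi)
=\PartF(\sigma^2)+\sum_{l\geq 1}\frac{1}{l!}\left(\frac{8}{\sigma^2}\right)^l\int \obs{\phi}{s}{t}^l\,\d\FMeas{\sigma^2}(\phi),
\end{equation*}
so it suffices to show the series converges.

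For a single chord there are two domains, with variables $k_1,k_2$ and lengths $\tau_1=t-s$, $\tau_2=1-(t-s)$, both positive. The $l$-th moment is therefore
\begin{equation*}
\int_{\R_+^2}\frac{\Gamma\big(\tfrac l2\pm ik_1\pm ik_2\big)}{2\pi^2\Gamma(l)}\left(\frac{\sigma^2}{2}\right)^l\prod_{m=1}^2 e^{-\tau_m\sigma^2k_m^2/2}\sinh(2\pi k_m)\,2k_m\d k_m .
\end{equation*}
Multiplying by $(8/\sigma^2)^l/l!$, the factor $(\sigma^2/2)^l$ combines to $4^l$. The $l$-th term of the series is then
\begin{equation*}
\frac{4^l}{l!\,\Gamma(l)}\int_{\R_+^2}\frac{\Gamma\big(\tfrac l2\pm ik_1\pm ik_2\big)}{2\pi^2}\prod_{m}e^{-\tau_m\sigma^2k_m^2/2}\sinh(2\pi k_m)\,2k_m\d k_m.
\end{equation*}

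The main task is a bound on $\Gamma(\tfrac l2\pm ik_1\pm ik_2)$, uniform enough in $l$ to sum the series. Two estimates are available:
\begin{itemize}
\item the crude bound $|\Gamma(x+iy)|\le\Gamma(x)$ gives $|\Gamma(\tfrac l2\pm\cdots)|\le\Gamma(l/2)^4$;
\item Stirling's formula, $|\Gamma(x+iy)|\approx\sqrt{2\pi}\,|x+iy|^{x-1/2}e^{-x-\pi|y|/2}$, gives decay of order $e^{-\pi(|k_1+k_2|+|k_1-k_2|)}=e^{-2\pi\max(k_1,k_2)}$, which cancels the $\sinh$ growth.
\end{itemize}
For the $l$-dependence I will use the crude bound together with duplication. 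By Legendre's formula, $\Gamma(l/2)^2\le C\,2^{-l}\Gamma(l)\sqrt l$, so $\Gamma(l/2)^4\le C\,4^{-l}\Gamma(l)^2\,l$. Hence
\begin{equation*}
\frac{4^l\,\Gamma(l/2)^4}{l!\,\Gamma(l)}\le C\,\frac{\Gamma(l)}{l!}\,l=C .
\end{equation*}
So the $l$-th term is bounded by $C$ times $\int \prod_m e^{-\tau_m\sigma^2k_m^2/2}\sinh(2\pi k_m)\,2k_m\d k_m<\infty$. This bound is uniform in $l$ but not summable, and this is exactly the sharpness of the constant $8/\sigma^2$ appearing.

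To get summability I will extract a polynomial gain in $l$ from the $k$-dependence, replacing $\Gamma(l/2)^4$ by a bound $\Gamma(l/2)^4\,g_l(k_1,k_2)$. Writing
\begin{equation*}
\frac{|\Gamma(x+iy)|^2}{\Gamma(x)^2}=\prod_{n\geq 0}\left(1+\frac{y^2}{(x+n)^2}\right)^{-1},
\end{equation*}
the factor $g_l$ with $y=k_1\pm k_2$, $x=l/2$ behaves like $\exp(-c\,y^2/l)$ in the region $|y|\lesssim l$. Integrating $e^{-c(k_1^2+k_2^2)/l}$ against the Gaussian weight does not decay in $l$, so this needs more care. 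The right framing is to evaluate exactly: integrate $k_1,k_2$ first against $\Gamma(\tfrac l2\pm ik_1\pm ik_2)\sinh\sinh$ using the known Mellin--Barnes or Plancherel identity for the Virasoro/$SL(2)$ kernel. This turns the $l$-th moment into an explicit expression like $\int_0^\infty\big(\cosh\text{-type kernel}\big)^{-l}e^{-\cdots}$. The sum over $l$ then becomes a geometric or exponential series whose convergence radius is exactly $8/\sigma^2$ only marginally, and one checks that at the endpoint the Gaussian factors $e^{-\tau_m\sigma^2k_m^2/2}$, with both $\tau_m>0$, supply the extra decay.

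I expect this last step to be the main obstacle. If it is too delicate, I would fall back on the route via Lemma-type identities of \citep{LosevCorr} expressing $\obs{\phi}{s}{t}$ through the Brownian bridge representation. There, $\exp\{8\,\obs{}{}{}/\sigma^2\}$ integrates to a Bessel-type generating function, $\sum_l 4^l\Gamma(\tfrac l2\pm\cdots)/(l!\Gamma(l))$, which sums to a closed form in $\arccosh$ (compare Statement~\ref{stmArccoshDef}). I would then verify directly that the resulting function is integrable against the Gaussian weights.
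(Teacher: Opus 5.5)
Your setup is right, and it is the route the paper has in mind, since it derives this proposition from Theorem~\ref{thrMainCorrelations_corr}. The ingredients are monotone convergence, the $N=1$ formula with $\tau_1=t-s$ and $\tau_2=1-(t-s)$, the factor $4^l$, and the bound $\Gamma(\tfrac l2\pm ik_1\pm ik_2)=|\Gamma(\tfrac l2+i(k_1+k_2))|^2\,|\Gamma(\tfrac l2+i(k_1-k_2))|^2\le\Gamma(\tfrac l2)^4$. As written, however, the proposal does not prove the statement. After deciding that the crude bound is not summable, you switch to an exact-evaluation scheme and a Bessel-type fallback, and neither is carried out. The mechanism you propose for them, that the Gaussian weights supply the missing decay in $l$, is also the wrong one. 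For fixed $k_1,k_2$ one has $|\Gamma(\tfrac l2+iy)|^2/\Gamma(\tfrac l2)^2\to 1$ as $l\to\infty$, so the $k$-integrals contribute no decay in $l$ at all.

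The real problem is your duplication step, which loses a factor of $l$ in each $\Gamma(\tfrac l2)^2$. Legendre's formula $\Gamma(\tfrac l2)\Gamma(\tfrac{l+1}{2})=2^{1-l}\sqrt{\pi}\,\Gamma(l)$, combined with $\Gamma(\tfrac{l+1}{2})\ge \tfrac12\sqrt{l}\,\Gamma(\tfrac l2)$ (Wendel's inequality), gives $\Gamma(\tfrac l2)^2\le C\,2^{-l}\,\Gamma(l)\,l^{-1/2}$. The exponent is $-\tfrac12$, not $+\tfrac12$. Hence
\begin{equation*}
\frac{4^l\,\Gamma(l/2)^4}{l!\,\Gamma(l)}\le C\,\frac{\Gamma(l)}{l\cdot l!}=\frac{C}{l^2}.
\end{equation*}
Since $\int_{\R_+}e^{-\tau\sigma^2k^2/2}\sinh(2\pi k)\,2k\,\d k=\PartF(\tau\sigma^2)<\infty$ for $\tau>0$, the $l$-th term of your series is at most $C\,l^{-2}\,\PartF\big((t-s)\sigma^2\big)\,\PartF\big((1-t+s)\sigma^2\big)/(2\pi^2)$. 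The series therefore converges, and the proof is finished at that point; everything after your duplication estimate can be dropped.

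The sharpness of $8/\sigma^2$ shows up in this same computation, not as an $O(1)$ non-summable bound. By the pointwise limit above and dominated convergence (the ratio is at most $1$), the terms are of exact order $l^{-2}$. Any larger constant therefore makes them grow geometrically.
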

In this work we give a sharper estimate for the exponential moments in the case when $s$ and $t$ are close; see Proposition~\ref{prpBoundExpMomentsLine}.

\subsection{Schwarzian Measure on the interval}
\label{sectSchwInterval}

In \citep{LosevCorr}, when we were calculating the correlation functions for the Schwarzian Theory on the circle, we also derived several results about the correlation functions for the push-forwards of the Brownian Bridge measures $\d \WS{\sigma^2}{a}{T}$ under $\B$, given in \eqref{eqDefQ}.
We call these push-forwards \emph{Schwarzian Measure on the interval} $[0, T]$.
Below we recall results which turn out to be useful in the present work as well.

Firstly, we can calculate correlation functions of non-interlaced observables.
The diagrammatic language for the Proposition below is the same as in Section~\ref{sectSchwLineIntro}, with the only exception that now $\tau_0$ is finite.

\begin{prp}\label{prpIntervalExpObsFormula_corr}
Fix $T>0$, $a\in \R$, and $N\geq 0$.
Let $\Big\{\obs{f}{s_{j}}{t_j}_0\Big\}_{j=1}^{N}$ be a set of non-interlaced observables on $[0, T]$, 
 and $\left\{l_j\right\}_{j=1}^N$ be positive integers. 
Then,
\begin{multline} \label{eqPrpFinalResultInterval_corr}
\int 
\prod_{j=1}^N \obs{\B_{\xi}}{s_j}{t_j}^{l_j}_0
\d \WS{\sigma^2}{a}{T}(\xi)
=
\int_{\R_+^{N+1}} 
\prod_{j=1}^N \frac{\Gamma\Big(\frac{l_j}{2}\pm i w_1(j) \pm i w_2(j)\Big) }{2 \pi^2\, \Gamma(l_j)} \cdot \left(\frac{\sigma^{2}}{2}\right)^{l_j}\\
\times
\exp\left(-\frac{\tau_0\sigma^2}{2}\cdot k_0^2\right)  \frac{\cos(a\, k_0)}{\pi} \d k_0 \cdot
\prod_{m=1}^{N}\exp\left(-\frac{\tau_m \sigma^2}{2}\cdot k_m^2\right)  \sinh(2\pi k_m)\, 2 k_m \d k_m,
\end{multline}
where the right-hand side converges absolutely.
\end{prp}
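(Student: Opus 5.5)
The plan is to reduce everything to Gaussian/Feynman--Kac computations for the Brownian bridge, using the explicit form of the observables under $\B$. Since $\B_\xi'(s)=e^{\xi(s)}$ and $\B_\xi(t)-\B_\xi(s)=\int_s^t e^{\xi(r)}\d r$, we have
\begin{equation*}
\obs{\B_{\xi}}{s}{t}^{l}_0=\frac{e^{l(\xi(s)+\xi(t))/2}}{\big(\int_s^t e^{\xi(r)}\d r\big)^{l}}
=\frac{1}{\Gamma(l)}\int_0^\infty u^{l-1}\, e^{l(\xi(s)+\xi(t))/2}\exp\Big\{-u\int_s^t e^{\xi(r)}\d r\Big\}\d u .
\end{equation*}
Inserting this Gamma-function representation for every $j$ turns the product of observables into the exponential of an additive functional $-\int V(r,\xi(r))\d r$. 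Here the potential is a sum of Liouville terms $u_j e^{\xi}$ over the arcs containing $r$, multiplied by boundary weights $e^{l_j\xi/2}$ at the endpoints. The non-interlaced structure is used here: a point $r$ lies under a nested family of arcs.

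Writing $e^{\xi}$ as $u_j e^{\xi}$ after a shift $\xi\mapsto\xi+\log u_j$, each inner domain $m$ therefore sees a single Liouville Hamiltonian $H=-\tfrac{\sigma^2}{2}\partial_x^2+e^{x}$ along its boundary segments of total length $\tau_m$. The unbounded domain $0$ sees the free Laplacian on the remaining length $\tau_0$.

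I would then use the Markov property of the unnormalised bridge $\d\WS{\sigma^2}{a}{T}$ at all points $s_j,t_j$. This writes the integral as an iterated integral of heat kernels: free Gaussian kernels for domain $0$ and Liouville heat kernels $e^{-\tau H}(x,y)$ for domains $m\ge1$. Each kernel is glued at the endpoints with the weights $e^{l_j x/2}$. The free kernel is written in Fourier form as $\int_0^\infty e^{-\tau_0\sigma^2k_0^2/2}\cos(k_0(\cdot))\,\d k_0/\pi$, which after gluing yields the factor $\cos(ak_0)/\pi$. The Liouville kernel has the Kontorovich--Lebedev spectral decomposition, with eigenfunctions proportional to $K_{2ik}(c\,e^{x/2})$ and spectral density $\sinh(2\pi k)\,2k\,\d k$ (after normalisation), and eigenvalue $\sigma^2k^2/2$. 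This produces the factors $\exp(-\tau_m\sigma^2k_m^2/2)\sinh(2\pi k_m)2k_m\d k_m$.

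The remaining integrals over the endpoint values $x=\xi(s_j)=\xi(t_j)$-type variables and over $u_j$ are local to each arc $j$. Each involves exactly two eigenfunctions, belonging to the domains $w_1(j),w_2(j)$ on either side of the arc. I would evaluate them, proceeding inductively from innermost arcs outward, via the classical formula
\begin{equation*}
\int_0^\infty K_{2i\mu}(y)K_{2i\nu}(y)\,y^{l-1}\d y=\frac{2^{l-3}}{\Gamma(l)}\Gamma\Big(\tfrac{l}{2}\pm i\mu\pm i\nu\Big).
\end{equation*}
This gives $\Gamma(\tfrac{l_j}{2}\pm iw_1(j)\pm iw_2(j))/(2\pi^2\Gamma(l_j))$. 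Tracking the scaling by $\sigma^2$ in the substitution $y\propto e^{x/2}/\sigma$ produces $(\sigma^2/2)^{l_j}$.

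The main obstacle is justifying the interchanges: Fubini between $u_j$, the spectral integrals and the Wiener integral, and the use of spectral decompositions with non-$L^2$ boundary weights $e^{l_jx/2}$. I would handle this by first proving absolute convergence of the right-hand side using Stirling bounds on $|\Gamma(\tfrac{l}{2}+ i\cdot)|$, which decay like $e^{-\pi|k|}$ and compensate $\sinh(2\pi k)$, together with the Gaussian factors. With that convergence in hand, everything is nonnegative before the spectral step, so Tonelli applies there, and dominated convergence applies after it. Where needed, I would truncate $l_j$ weights by approximation and pass to the limit.
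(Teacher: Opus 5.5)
The paper does not prove this proposition; it is quoted from \citep{LosevCorr}, so I am judging your argument on its own terms. The ingredients you assemble are sound: the Gamma-function representation, Feynman--Kac, the Fourier form of the free kernel, the Kontorovich--Lebedev density, and Gaussian/Stirling bounds for convergence. The gap is that the step carrying all the content is asserted rather than proved.

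\textbf{Merging the segments of a domain.} After the Markov decomposition at the points $s_j,t_j$, each boundary \emph{segment} gets its own propagator, and hence its own spectral variable. Domains, however, generally consist of several segments. Domain $0$ always contains $[0,\min_j s_j]$ and $[\max_j t_j,T]$, and in Figure~\ref{fig:observables_line_diagram_def_fourier} $\tau_1$ is a sum of three segments. To get a single $k_m$ with weight $e^{-\tau_m\sigma^2k_m^2/2}$, you must show that every arc $j$, once $u_j$ and everything inside it are integrated out, acts on the enclosing domain by an operator that is diagonal in that domain's spectral basis.

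Your claim that the remaining integrals are ``local to each arc'' with ``exactly two eigenfunctions'' presupposes exactly this. Moreover, $u_j$ is not local to arc $j$: the coefficient on domain $m$ is $U_m=\sum_{j\text{ encloses }m}u_j$, so $u_j$ enters every eigenfunction scale inside arc $j$.

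For an outermost arc, diagonality does hold. Let $Q_v$ be the kernel of everything inside the arc when the coefficient there is $v$. The arc kernel $V(x,y)=\Gamma(l)^{-1}\int_0^\infty u^{l-1}e^{l(x+y)/2}Q_u(x,y)\,\d u$ is invariant under $(x,y,u)\mapsto(x+c,y+c,e^{-c}u)$, so it is a convolution. Note that this uses the $u$-integral essentially.

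For nested arcs the enclosing generator is $H_U=-\frac{\sigma^2}{2}\partial_x^2+Ue^x$ with $U>0$, and translation invariance is lost, so an extra argument is needed. For instance, combine the scaling covariance $v\,\partial_vQ_v=(\partial_x+\partial_y)Q_v$ with $[H_v,Q_v]=0$, both obtained by induction from the innermost arcs. This gives the kernel of $[H_U,V_U]$ as $-\frac{\sigma^2}{2}\Gamma(l)^{-1}e^{l(x+y)/2}\int_0^\infty\partial_u\big(u^{l}(\partial_x-\partial_y)Q_{U+u}(x,y)\big)\,\d u$, which vanishes.

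\textbf{The vertex identity.} The formula you quote does not match your own setup, for three reasons.
\begin{enumerate}
\item At $s_j$ the outside and inside eigenfunctions are $K_{2ik}(c\sqrt{U}e^{x/2})$ and $K_{2iw}(c\sqrt{U+u_j}\,e^{x/2})$, with $c=\sqrt{8}/\sigma$. Their arguments are \emph{different}, and for an outermost arc the outside factor is a plane wave.
\item The weight $e^{l_j\xi/2}$ sits at both $s_j$ and $t_j$, and these are coupled only through $u_j$.
\item The factor $1/\Gamma(l_j)$ is already supplied by your Gamma representation, so the equal-argument $KK$ formula, which carries its own $1/\Gamma(l)$, would give the wrong normalisation.
\end{enumerate}
What actually produces the vertex is a product of two Mellin transforms, one per endpoint. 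For $N=1$, write the arc kernel as $g(y-x)$, put $\tau=t-s$, and substitute $z=\log u$, $v=y+\log u$. This gives
\begin{equation*}
\hat g(k)=\int_{\R}g(z)e^{ikz}\,\d z=\frac{1}{\Gamma(l)}\iint e^{(\frac{l}{2}-ik)z+(\frac{l}{2}+ik)v}\,p_\tau(z,v)\,\d z\,\d v,
\end{equation*}
where the Kontorovich--Lebedev expansion of the kernel of $e^{-\tau H_1}$ is
\begin{equation*}
p_\tau(z,v)=\int_0^\infty e^{-\frac{\tau\sigma^2w^2}{2}}\,\frac{2}{\pi^2}\,\sinh(2\pi w)\,2w\,K_{2iw}\big(ce^{z/2}\big)K_{2iw}\big(ce^{v/2}\big)\,\d w .
\end{equation*}
Now apply $\int_0^\infty y^{l-2ik-1}K_{2iw}(y)\,\d y=2^{l-2ik-2}\,\Gamma(\frac{l}{2}-ik+iw)\Gamma(\frac{l}{2}-ik-iw)$ and its conjugate. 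This yields exactly $\frac{\Gamma(\frac{l}{2}\pm ik\pm iw)}{2\pi^2\Gamma(l)}(\frac{\sigma^2}{2})^{l}$ against $\sinh(2\pi w)\,2w\,\d w$. Then $\frac{1}{\pi}\int_0^\infty e^{-\tau_0\sigma^2k^2/2}\hat g(k)\cos(ak)\,\d k$ is the claimed formula.

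For nested arcs you must additionally identify the eigenvalue of $V_U$ on $K_{2ik}(c\sqrt{U}e^{x/2})$ with this same symbol. It is $U$-independent by scaling, and can be read off where $x\to-\infty$ and $Ue^x$ is negligible.

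The Fubini/Tonelli issues you flag are real but secondary. The diagonalisation lemma and the correct vertex computation are the missing core.
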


Secondly, we can bound exponential moments of the cross-ratio observables.

\begin{prp}\label{prpObsExpMomentBound_corr}
Let $T>0$ and $a\in \R$. For any $0\leq s< t\leq T$ we have
\begin{equation}
\int \exp\left\{\frac{8}{\sigma^2}\, \obs{\B_{\xi}}{s}{t}_0 \right\} 
\d \WS{\sigma^2}{a}{T}(\xi) 
\leq
\frac{1}{\sqrt{2\pi T}\sigma}\exp\left(\frac{2000}{(t-s)\sigma^2}-\frac{a^2}{2 T\sigma^2}\right).
\end{equation}
In particular, the expression above is finite.
\end{prp}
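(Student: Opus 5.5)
The plan is to reduce the bound to a single-observable moment computation on the interval $[s,t]$, using the Markov property of the Brownian bridge, and then to sum the power series of the exponential using Proposition~\ref{prpIntervalExpObsFormula_corr} together with elementary Gamma-function bounds.

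\emph{Step 1 (localisation).} Since
\[
\obs{\B_{\xi}}{s}{t}_0=\frac{e^{(\xi(s)+\xi(t))/2}}{\int_s^t e^{\xi(u)}\d u},
\]
the observable depends only on the increments $\eta(u)=\xi(s+u)-\xi(s)$, $u\in[0,t-s]$. Condition on $\xi(s)=x$ and $\xi(t)=x+b$. By the Markov property of the unnormalised bridge (Definition~\ref{defn:BB}),
\[
\d\WS{\sigma^2}{a}{T}=\int\!\!\int \d\WS{\sigma^2}{x}{s}\otimes \d\WS{\sigma^2}{b}{t-s}\otimes \d\WS{\sigma^2}{a-x-b}{T-t}\,\d x\,\d b .
\]
The left-hand side of the claim therefore equals
\[
\int\!\!\int p_s(x)\,p_{T-t}(a-x-b)\,I(b)\,\d x\,\d b,
\qquad
I(b)=\int \exp\Big\{\tfrac{8}{\sigma^2}\obs{\B_\eta}{0}{t-s}_0\Big\}\d\WS{\sigma^2}{b}{t-s}(\eta),
\]
where $p_r(y)=(2\pi r\sigma^2)^{-1/2}e^{-y^2/(2r\sigma^2)}$ are the total masses. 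It then suffices to show
\[
I(b)\le e^{2000/((t-s)\sigma^2)}\,p_{t-s}(b)
\quad\text{for all } b,
\]
because Gaussian convolution gives $\int\!\!\int p_s\,p_{t-s}\,p_{T-t}=p_T(a)$, which is exactly the prefactor $\frac{1}{\sqrt{2\pi T}\sigma}e^{-a^2/(2T\sigma^2)}$.

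\emph{Step 2 (series expansion).} Expand the exponential and apply Proposition~\ref{prpIntervalExpObsFormula_corr} with $N=1$ on the interval $[0,t-s]$. Here the chord spans the whole interval, so $\tau_0=0$, $\tau_1=t-s$, and $w_1=k_0$, $w_2=k_1$. The $l$-th term becomes
\[
\frac{4^l}{l!\,2\pi^2\Gamma(l)}\int\!\!\int \Gamma\big(\tfrac l2\pm ik_0\pm ik_1\big)\,\frac{\cos(bk_0)}{\pi}\,e^{-\frac{(t-s)\sigma^2}{2}k_1^2}\sinh(2\pi k_1)\,2k_1\,\d k_0\,\d k_1 .
\]
The Gamma product equals $|\Gamma(\tfrac l2+i(k_0+k_1))|^2\,|\Gamma(\tfrac l2+i(k_0-k_1))|^2\ge 0$. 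It is bounded by $\Gamma(l/2)^4$, and it also decays like $e^{-\pi\max(|k_0+k_1|,|k_0-k_1|)}\,\mathrm{poly}$ for large arguments. Stirling's formula gives
\[
\frac{4^l\,\Gamma(l/2)^4}{l!\,\Gamma(l)}\asymp \frac{1}{l^2},
\]
so the coefficients are summable. This is precisely why $8/\sigma^2$ is the borderline constant. The $k_1$-integral produces at most $C\,((t-s)\sigma^2)^{-3/2}e^{2\pi^2/((t-s)\sigma^2)}$, and any polynomial prefactors are absorbed into $e^{2000/((t-s)\sigma^2)}$ generously.

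\emph{Step 3 (main obstacle: uniformity in $b$).} Bounding $|\cos(bk_0)|\le1$ destroys the Gaussian factor $e^{-b^2/(2(t-s)\sigma^2)}$ required in Step 1. Instead, I would use that every term is a positive function of the endpoint value $b$, which is the Fourier transform in $k_0$ of a function analytic in a strip. To recover the Gaussian decay I would first try to shift the $k_0$ contour to $k_0\mapsto k_0+ i\,b/((t-s)\sigma^2)$, using the analytic continuation and decay of the Gamma factors. Failing that, I would use the pathwise bound $\obs{\B_\eta}{0}{t-s}_0\le e^{\,\cdot}$ via $\int_0^{t-s} e^{\eta}\ge\dots$, comparing the bridge to $b$ with the bridge to $0$ through a Girsanov shift by the linear drift $ub/(t-s)$. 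Under this shift the observable is invariant up to the explicit factor $\frac{b/(t-s)}{\sinh(b/2)}$-type corrections, which are bounded by $1$ relative to the drift-free case. Either route reduces to the $b=0$ estimate from Step 2, multiplied by $p_{t-s}(b)$, which completes the argument.
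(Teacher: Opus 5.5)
\textbf{There is a genuine gap: Step~3 is not resolved.} Your Step~1 is sound. The Chapman--Kolmogorov disintegration of $\d\WS{\sigma^2}{a}{T}$, together with the invariance of $\obs{\B_{\xi}}{s}{t}_0$ under adding constants to $\xi$, correctly reduces the claim to $I(b)\le e^{2000/(\tau\sigma^2)}\,p_{\tau}(b)$ for all $b$, where $\tau=t-s$. Step~3 is the crux, as you say, and neither proposed route works.

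(a) Contour shift. With $\tau_0=0$ there is no Gaussian weight in $k_0$ to complete a square against. Moreover, $\Gamma(\tfrac l2\pm ik_0\pm ik_1)$ has poles at $k_0=\pm k_1\pm i(\tfrac l2+n)$, so the $k_0$-contour can move by less than $l/2$. This gives at best $e^{-(l/2-\epsilon)|b|}$ decay of the $l$-th term, never $e^{-b^2/(2\tau\sigma^2)}$ once $|b|>l\tau\sigma^2/2$. The Gaussian in $b$ does not come from the $k_0$-integral at all. It appears only after summing over $l$ and integrating in $k_1$ against $e^{-\tau\sigma^2k_1^2/2}$.

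(b) Drift comparison. The bridge to $b$ is exactly $\eta_0(u)+bu/\tau$ with $\eta_0$ a bridge to $0$, so no Girsanov density is needed. But the ratio of the two observables is
\[
\frac{\int_0^\tau e^{\eta_0(u)}\,\d u}{\int_0^\tau e^{\eta_0(u)+b(u/\tau-1/2)}\,\d u},
\]
which is close to $e^{|b|/2}$ when $e^{\eta_0}$ concentrates near the appropriate endpoint. The factor $\frac{b/2}{\sinh(b/2)}\le 1$ is only its value for $\eta_0\equiv 0$. Exponential moments are driven precisely by such concentrated paths, so there is no pathwise comparison with $b=0$. Symmetrising $b\mapsto -b$ does not help: $x\mapsto e^{\lambda/x}$ is convex and decreasing, so Jensen points the wrong way.

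A minor further point concerns Step~2: the constant is not free. As $\tau\sigma^2\to\infty$, the terms with $l\ge 1$ must total at most $(e^{2000/(\tau\sigma^2)}-1)p_\tau(0)\approx 800\,(\tau\sigma^2)^{-3/2}$. So ``some $C$'' has to be made explicit.

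\textbf{The missing idea is to sum the series in closed form rather than bound it term by term.} This is the same summation the Appendix performs in the proof of Proposition~\ref{prpAppExpEst_LDP}. Insert Proposition~\ref{prpIntervalExpObsFormula_corr} (with $N=1$, $\tau_0=0$, $\tau_1=\tau$) into the exponential series. Then apply \eqref{eqPrpKeyArccoshExpansion} with $2z=4$, $\beta=b$, $k=k_1$, $w=k_0$. The sum over $l\ge1$ of the $k_0$-integrals, multiplied by $2k_1\sinh(2\pi k_1)$, becomes $\cos\big(2k_1\arccosh[\cosh(b/2)-2]\big)-\cos(k_1 b)$. Adding the $l=0$ term and doing the Gaussian $k_1$-integral gives
\[
I(b)=\frac{1}{\sqrt{2\pi\tau}\,\sigma}\exp\Big\{-\frac{2}{\tau\sigma^2}\arccosh^2\big[\cosh(b/2)-2\big]\Big\}.
\]
The borderline constant $8/\sigma^2$ is reached by monotone convergence in $\lambda\uparrow 4$, since all terms are nonnegative.

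Now the Gaussian is explicit. Let $g(x)=\arccosh^2[\cosh x-2]-x^2$ and set $\cosh y=\cosh x-2$. Then
\[
g'(x)=2\sinh x\,\Big(\frac{y}{\sinh y}-\frac{x}{\sinh x}\Big)\ge 0,
\]
where $\frac{y}{\sinh y}$ is read as $\frac{\theta}{\sin\theta}$ when $y=i\theta$. Since $g(0)=-\pi^2$, this gives $I(b)\le e^{2\pi^2/(\tau\sigma^2)}\,p_\tau(b)$. Your Step~1 convolution then finishes the proof, even with $2\pi^2$ in place of $2000$.
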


\section{Proofs for Schwarzian Measure on the line} 
\label{sectSchwLineProofs}
In this section we prove the main results from Section~\ref{sectSchwLineIntro}.
We prove Theorem~\ref{thrUniqLineSchw} in Section~\ref{sectSchwLineUnique}, and Theorem~\ref{thrLocObsHolderLine} in Section~\ref{sectSchwLineHolder}.

\subsection{Uniqueness theorem}
\label{sectSchwLineUnique}

\begin{lmm}\label{lmmUniqLineExpExten}
In the setting of Theorem~\ref{thrUniqLineSchw}, we have that for any $N>0$ and any continuous bounded function $F\in C(\R^N)$,
\begin{equation}\label{eqUniqFuncEqual}
\int_{\CZPlus(\R)} F\left(\Big\{\obs{f}{s_{j}}{t_j}_0\Big\}_{j=1}^{N}\right) \d\mathcal{P}(f)
=
\int_{\CZPlus(\R)} F\left(\Big\{\obs{f}{s_{j}}{t_j}_0\Big\}_{j=1}^{N}\right) \d\FMInf{\varkappa^2}(f).
\end{equation}
\end{lmm}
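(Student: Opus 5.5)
The plan is to reduce an arbitrary (possibly interlaced) family of observables to a non-interlaced one through an algebraic identity. Once that is done, \eqref{eqThrUniqCorrAssumpt} together with a moment-determinacy argument finishes the proof. First note that taking $N=0$ in \eqref{eqThrUniqCorrAssumpt} shows that $\mathcal{P}$ is a probability measure.

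\emph{Step 1 (reduction to a triangulation).} Let $x_1<\dots<x_n$ be the distinct points among $\{s_j,t_j\}_{j=1}^N$. Choose a maximal non-crossing family $\mathcal{T}$ of half-circle chords on these points. It consists of the $n-1$ adjacent chords $(x_i,x_{i+1})$, the chord $(x_1,x_n)$, and $n-3$ further nested chords; this is a triangulation of the $n$-gon, with $2n-3$ chords. By construction the set $\{\obs{f}{a}{b}_0\}_{(a,b)\in\mathcal{T}}$ is non-interlaced. For $f\in\CZPlus(\R)$ write $\lambda_{ab}=1/\obs{f}{a}{b}_0=|f(b)-f(a)|/\sqrt{f'(a)f'(b)}>0$. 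For $a<b<c<d$ the elementary identity
\begin{equation*}
(f(c)-f(a))(f(d)-f(b))=(f(b)-f(a))(f(d)-f(c))+(f(d)-f(a))(f(c)-f(b)),
\end{equation*}
divided by $\sqrt{f'(a)f'(b)f'(c)f'(d)}$, gives the Ptolemy relation $\lambda_{ac}\lambda_{bd}=\lambda_{ab}\lambda_{cd}+\lambda_{ad}\lambda_{bc}$. We then use the standard flip argument for triangulations: every diagonal of the polygon is reached from $\mathcal{T}$ by a finite sequence of flips, and each flip expresses the new $\lambda$ as a positive rational function of the old ones. Hence for every pair $(x_i,x_k)$ there is a function $R_{ik}$, continuous on $(0,\infty)^{2n-3}$, with $\obs{f}{x_i}{x_k}_0=R_{ik}\big(\{\obs{f}{a}{b}_0\}_{\mathcal{T}}\big)$ for all $f\in\CZPlus(\R)$. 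Consequently $F\big(\{\obs{f}{s_j}{t_j}_0\}\big)=G\big(\{\obs{f}{a}{b}_0\}_{\mathcal{T}}\big)$ for some bounded $G$ that is continuous on the open orthant, and the observables are strictly positive on $\CZPlus(\R)$. It therefore suffices to prove that the law of the random vector $V=(\obs{f}{a}{b}_0)_{(a,b)\in\mathcal{T}}$ is the same under $\mathcal{P}$ and under $\FMInf{\varkappa^2}$.

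\emph{Step 2 (equal moments).} For any multi-index $(l_{ab})\in\Z_{\ge0}^{\mathcal{T}}$, discard the chords with $l_{ab}=0$. The remaining subfamily is still non-interlaced, so \eqref{eqThrUniqCorrAssumpt} gives $\int V^{l}\d\mathcal{P}=\int V^{l}\d\FMInf{\varkappa^2}$. Thus all mixed moments of $V$ agree.

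\emph{Step 3 (determinacy; main obstacle).} By Corollary~\ref{crlExpMomentLine}, each coordinate satisfies $\int e^{(8/\varkappa^2)V_{ab}}\d\FMInf{\varkappa^2}<\infty$. By Hölder's inequality we get $\int e^{c\sum_{ab}V_{ab}}\d\FMInf{\varkappa^2}<\infty$ for $c=8/(\varkappa^2(2n-3))$. Hence the moment generating function $M(u)=\int e^{u\cdot V}$ is finite for $|u|_\infty<c$, with the same bound under $\mathcal{P}$: expanding $e^{c\sum V_{ab}}$ as a series of nonnegative monomials and applying monotone convergence, it has the same value under both measures. Both measures are supported where $V\ge0$. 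In both cases $M$ is analytic on the polydisc $|u|_\infty<c$ and equals $\sum_l u^l\,\mathbb{E}[V^l]/l!$; this expansion is justified by dominated convergence, using $\sum_l|u|^lV^l/l!=e^{|u|\cdot V}$. So the two moment generating functions coincide on a neighbourhood of $0$. The standard uniqueness theorem for multivariate laws whose mgf is finite near the origin then gives equality of the laws of $V$. Applying this to $G$ concludes \eqref{eqUniqFuncEqual}. The delicate points I expect to check carefully are the uniform exponential integrability that transfers to $\mathcal{P}$, and the continuity of the $R_{ik}$ on the relevant positive domain. The latter is guaranteed because every flip divides only by a positive $\lambda$.
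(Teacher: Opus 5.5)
Your proof is correct, and its analytic core is the paper's. The paper argues in one line that equal moments plus the exponential moments of Corollary~\ref{crlExpMomentLine} force equal characteristic functions, hence equal laws; that is your Steps 2--3.

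The genuine difference is your Step~1. In the paper the lemma is only needed for non-interlaced families: that is the ``setting of Theorem~\ref{thrUniqLineSchw}'', and the proof of that theorem explicitly notes non-interlacedness before invoking the lemma. So the paper's route is shorter and needs no reduction. You instead prove the statement for arbitrary families by passing to a triangulation and using the Ptolemy relation. This is the same device the paper uses later in Lemma~\ref{lmmGlobalUniqFuncEqual}. There it uses the fan triangulation $\{(x_j,x_{j+1})\}\cup\{(x_1,x_j)\}$ with an induction on $|j-i|$, which would also let you avoid citing connectivity of the flip graph.

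Your version buys two things. First, generality: applications would not need to check non-interlacedness. Second, rigour, by making explicit three points the paper's one-line proof leaves implicit:
\begin{itemize}
\item joint exponential integrability of $\sum V_{ab}$, obtained via H\"older;
\item the transfer of that integrability to $\mathcal{P}$, via Tonelli and equality of moments;
\item the fact that expanding $e^{iu\cdot V}$ in a series only gives agreement of characteristic functions for small $u$, so an mgf-uniqueness (analyticity) theorem is needed to conclude.
\end{itemize}
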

\begin{proof}
From the existence of exponential moments (see Corollary~\ref{crlExpMomentLine}), and the fact that \eqref{eqUniqFuncEqual} holds for polynomials, we deduce that \eqref{eqUniqFuncEqual} also holds for imaginary exponentials of linear functions.
In other words, characteristic functions of distributions of stochastic processes $\{\obs{f}{s_{j}}{t_j}_0\Big\}_{j=1}^{N}$ with respect to $\d\mathcal{P}$ and $\d\FMInf{\varkappa^2}$ coincide.
Thus, these distributions coincide, which is equivalent to the statement of this Lemma.
\end{proof}

For the rest of this section it will be useful to note that for any $f\in\CZPlus(\R)$,
\begin{equation}\label{eqCross3P1}
\frac{\obs{f}{0}{s}_0 \obs{f}{0}{t}_0}{\obs{f}{s}{t}_0} 
= \frac{f'(0) \big(f(t)-f(s)\big)}
{f(s) f(t)}
= \frac{1}{f(s)} -\frac{1}{f(t)}, \qquad \text{for } 0<s<t.
\end{equation}
Similarly,
\begin{align}\label{eqCross3P2}
\frac{\obs{f}{s}{0}_0 \obs{f}{0}{t}_0}{\obs{f}{s}{t}_0} 
&= \frac{1}{f(t)} -\frac{1}{f(s)}, \qquad \text{for } s<0<t;\\
\label{eqCross3P3}
\frac{\obs{f}{s}{0}_0 \obs{f}{t}{0}_0}{\obs{f}{s}{t}_0} 
&= \frac{1}{f(s)} -\frac{1}{f(t)}, \qquad \text{for } s<t<0.
\end{align}

\begin{lmm}
In the setting of Theorem~\ref{thrUniqLineSchw}, we have that  $\lim_{T\to\infty}f(-T)=-\infty$ and $\lim_{T\to\infty}f(T)=\infty$ almost surely under both $\d\mathcal{P}$ and $\d\FMInf{\varkappa^2}$.
In other words,
\begin{align}
\mathcal{P}\left\{f: \, \lim_{T\to\infty}f(-T) = -\infty, \text{ and }   \lim_{T\to\infty}f(T) =\infty \right\} &=1;\\
\FMInf{\varkappa^2}\left\{f: \, \lim_{T\to\infty}f(-T) = -\infty, \text{ and }   \lim_{T\to\infty}f(T) =\infty\right\} &=1.
\end{align}
\end{lmm}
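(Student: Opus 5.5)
We will use the cross-ratio identities \eqref{eqCross3P1}--\eqref{eqCross3P3} to express $1/f(t)$ through observables, and then transfer the statement from $\FMInf{\varkappa^2}$ to $\mathcal{P}$ via Lemma~\ref{lmmUniqLineExpExten}. Since $f\in\CZPlus(\R)$ is strictly increasing with $f(0)=0$, for $t>0$ the limit $L_+=\lim_{T\to\infty}1/f(T)\in[0,\infty)$ exists. The claim $f(T)\to\infty$ is equivalent to $L_+=0$. Similarly, the claim $f(-T)\to-\infty$ is equivalent to $\lim_{T\to\infty}1/f(-T)=0$.

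\textbf{Step 1: the measure $\FMInf{\varkappa^2}$.} Here $f=\B_\xi$ with $\xi$ a two-sided Brownian motion. Then $f(T)=\int_0^T e^{\xi(s)}\d s\geq \int_0^T \mathbb{1}_{\{\xi(s)\geq 0\}}\d s$. By recurrence of Brownian motion, the Lebesgue measure of $\{s\geq 0:\xi(s)\geq 0\}$ is infinite almost surely; this follows, for example, from the law of large numbers applied to the i.i.d.\ blocks $\int_n^{n+1}$ after a Markov decomposition, or simply from the arcsine law. Hence $f(T)\to\infty$ almost surely. The same argument applied to $\xi(-\cdot)$ gives $f(-T)\to-\infty$.

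\textbf{Step 2: transfer to $\mathcal{P}$.} Fix $0<s<t$. By \eqref{eqCross3P1}, the quantity $\frac{1}{f(s)}-\frac{1}{f(t)}$ is a continuous function of the three observables $\obs{f}{0}{s}_0,\obs{f}{0}{t}_0,\obs{f}{s}{t}_0$, because the denominator is positive on $\CZPlus(\R)$. Take a bounded continuous $F$; note that the observables involved need not be non-interlaced, but Lemma~\ref{lmmUniqLineExpExten} as stated applies to any finite set, via equality of joint laws. It follows that the joint law of $\big(1/f(s)-1/f(t)\big)$ for finitely many $s<t$ agrees under $\mathcal{P}$ and $\FMInf{\varkappa^2}$. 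In particular, for fixed $s>0$ the law of $1/f(s)-1/f(n)$ under $\mathcal{P}$ coincides with its law under $\FMInf{\varkappa^2}$ for every $n$.

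For fixed $s=1$, the sequence $X_n=1/f(1)-1/f(n)$ is monotone increasing in $n$ and bounded by $1/f(1)$. Its limit is $1/f(1)-L_+$. Finite-dimensional laws of $(X_n)_n$ agree, so the law of $\lim_n X_n$ agrees; moreover the joint law of $(X_n, 1/f(1))$ is needed, and this is the main obstacle, because $1/f(1)$ alone is not a function of $\SL(2,\R)$-invariant observables of the $0$-based form. To handle it, I would instead use that $X_n\to 1/f(1)$ almost surely iff $L_+=0$. Then I would express the event through pairs, namely $\{L_+=0\}=\bigcap_{s}\{\lim_n (1/f(s)-1/f(n)) = \lim_{n}\lim_m (1/f(s)-1/f(m)) \cdots\}$. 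A cleaner route is to let $s\downarrow0$ is useless, since $1/f(s)\to\infty$. So instead I would use the ratio
\begin{equation*}
R_n=\frac{1/f(1)-1/f(n)}{1/f(1)-1/f(2n)},
\end{equation*}
or better the cross-ratio $\frac{1/f(n)-1/f(2n)}{1/f(1)-1/f(2n)}$, which is a function of observables (ratios of differences of $1/f$ at points $0<1<n<2n$ via \eqref{eqCross3P1}). On $\{L_+>0\}$ this tends to $0$ as well, however, so I expect to need a quantity sensitive to $L_+$. The direct handle is that $f(1)\cdot X_n=1-f(1)/f(n)$, and $f(1)=\obs{f}{0}{1}_0^{-1}\sqrt{f'(1)}$ involves $f'(1)$. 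I would therefore use the observable $\obs{f}{1}{n}_0^2\,/\,f'(1)f'(n)=(f(n)-f(1))^{-2}$ combined with $f(n)-f(1)\to\infty$ iff $f(n)\to\infty$. Concretely, $\frac{\obs{f}{0}{n}_0^2}{\obs{f}{0}{1}_0^2}\cdot\frac{f(1)^2}{f(n)^2}$ reduces everything to expressing $f(n)^{-2}f'(n)$ as a function of invariant observables; $\obs{f}{0}{n}_0^2=f'(n)/f(n)^2$ exactly. Thus the event $\{f(n)\to\infty\}$ equals $\{\frac{1}{f(1)}-\frac1{f(n)}\to\frac1{f(1)}\}$, and by \eqref{eqCross3P1} with pair $(1,n)$ and the identity $1/f(1)=\lim$ along $s\uparrow$ is avoided by noting $\frac{1}{f(1)}=\frac{1}{f(1)}-\frac1{f(n)}+\frac1{f(n)}$. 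Therefore I would show equality in law of the pair $\big(1/f(1)-1/f(n),\,1/f(1)\big)$ by approximating $1/f(1)$ through $1/f(1)-1/f(m)$ with $m\to\infty$ under $\FMInf{\varkappa^2}$, where it holds almost surely, and then passing the limit via Portmanteau. This limiting interchange is the step I expect to require the most care.
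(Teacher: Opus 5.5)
Your Step 1 is fine and matches the paper's Brownian-motion estimate: the occupation time of $\{\xi\ge 0\}$ on $[0,T]$ tends to infinity. The gap is in Step 2. You work only with the one-sided identity \eqref{eqCross3P1}, i.e.\ with observables whose endpoints lie in $[0,\infty)$, and such information provably cannot detect $L_+$.

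Here is why. Take $f\sim\FMInf{\varkappa^2}$ and $c>0$, and set $g=f/(1+cf)$ on $[0,\infty)$ and $g=f$ on $(-\infty,0]$. Then $g\in\CZPlus(\R)$, since $g'(0^\pm)=1$ and $g'>0$. Moreover $\obs{g}{s}{t}_0=\obs{f}{s}{t}_0$ identically whenever $s,t\ge 0$ or $s,t\le 0$; in particular $1/g(s)-1/g(t)=1/f(s)-1/f(t)$ for $0<s<t$. Yet $g(T)\to 1/c$.

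So no argument built from \eqref{eqCross3P1} alone can prove the lemma. This is also why your final limiting step is circular. Under $\mathcal{P}$ you have $1/f(1)-1/f(m)\to 1/f(1)-L_+$. Portmanteau therefore only tells you that the law of $\bigl(1/f(1)-1/f(n),\,1/f(1)-L_+\bigr)$ under $\mathcal{P}$ equals the law of $\bigl(1/f(1)-1/f(n),\,1/f(1)\bigr)$ under $\FMInf{\varkappa^2}$. Identifying $1/f(1)-L_+$ with $1/f(1)$ is exactly the claim. The detours through $f(1)$, $f'(1)$, $f'(n)$ do not help, since these are not functions of the observables.

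The missing idea is to use a chord straddling $0$, namely \eqref{eqCross3P2} with $s=-T$, $t=T$:
\[
\frac{\obs{f}{-T}{0}_0\,\obs{f}{0}{T}_0}{\obs{f}{-T}{T}_0}=\frac{1}{f(T)}-\frac{1}{f(-T)}=\frac{1}{|f(T)|}+\frac{1}{|f(-T)|}.
\]
The right-hand side is a sum of two nonnegative terms, each nonincreasing in $T$, and it tends to $0$ if and only if both claimed limits hold. The left-hand side is a continuous function of three non-interlaced observables. By Lemma~\ref{lmmUniqLineExpExten}, its law is therefore the same under $\mathcal{P}$ and under $\FMInf{\varkappa^2}$ for each $T$. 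Hence $\mathcal{P}\{1/|f(T)|+1/|f(-T)|>\eps\}=\FMInf{\varkappa^2}\{1/|f(T)|+1/|f(-T)|>\eps\}$, which tends to $0$ by your Step 1, and monotonicity in $T$ upgrades this to the almost sure statement. The two sides must be handled together; this is precisely the information your counterexample-blind route lacks.

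A smaller point: Lemma~\ref{lmmUniqLineExpExten} is only justified for non-interlaced families, because its proof uses the moment hypothesis, which is assumed only for such families. Your assertion that it applies to any finite set therefore needs an extra reduction argument. This matters for the joint law of $(X_n)_n$, which involves interlaced pairs such as $(0,n)$ and $(1,m)$ with $1<n<m$. With the two-sided identity above, this issue does not arise.
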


\begin{proof}
Since $f$ is monotone, it is sufficient to show that for any $\eps>0$
\begin{align}\label{eqUniqFTLimit1}
\lim_{T\to\infty}\mathcal{P}\left\{f:\,\frac{1}{|f(-T)|}+\frac{1}{|f(T)|}>\eps\right\} &= 0;\\
\label{eqUniqFTLimit2}
\lim_{T\to\infty}\FMInf{\varkappa^2}\left\{f:\,\frac{1}{|f(-T)|}+\frac{1}{|f(T)|}>\eps\right\} &= 0.
\end{align}

Using \eqref{eqCross3P2} and Lemma~\ref{lmmUniqLineExpExten},
\begin{multline}
\mathcal{P}\left\{f:\, \frac{1}{|f(-T)|}+\frac{1}{|f(T)|}>\eps\right\}
=
\mathcal{P}\left\{f:\,\frac{\obs{f}{-T}{0}_0 \obs{f}{0}{T}_0}{\obs{f}{-T}{T}_0}>\eps\right\}\\
=
\FMInf{\varkappa^2}\left\{f:\,\frac{\obs{f}{-T}{0}_0 \obs{f}{0}{T}_0}{\obs{f}{-T}{T}_0}>\eps\right\}
=
\FMInf{\varkappa^2}\left\{f:\,\frac{1}{|f(-T)|}+\frac{1}{|f(T)|}>\eps\right\}
\end{multline}
Therefore, it is enough to prove \eqref{eqUniqFTLimit2}.

Recall that by definition \eqref{eqDefLineSchw} of $\d\FMInf{\varkappa^2}$,
\begin{align}
\FMInf{\varkappa^2}\left\{f:\,\frac{1}{|f(-T)|}+\frac{1}{|f(T)|}>\eps\right\}
&=
\BM{\varkappa^2}{\infty}\left\{\xi:\, \left(\int_{-T}^{0} e^{\xi(s)}\d s\right)^{-1} + \left(\int_{0}^{T} e^{\xi(s)}\d s\right)^{-1}>\eps\right\}\\
&\leq
\BM{\varkappa^2}{\infty}\left\{\xi:\, \big|\left\{s\in[-T,0]:\, \xi(s)\geq 0\right\}\big|<2\eps^{-1}\right\}\\
&+\BM{\varkappa^2}{\infty}\left\{\xi:\, \big|\left\{s\in[0, T]:\, \xi(s)\geq 0\right\}\big|<2\eps^{-1}\right\}.
\end{align}
Taking $T\to\infty$ limit proves \eqref{eqUniqFTLimit2}.
\end{proof}

\medskip

\begin{proof}[Proof of Theorem~\ref{thrUniqLineSchw}]
It is sufficient to prove the equality for cylinder sets.
Fix $t_{-N} < t_{-N+1} < \ldots <t_0=0<t_1<\ldots<t_N$. 
We also take a large $T>t_N$, aiming to consider a $T\to \infty$ limit later.

As a shorthand, we index observables $\Big\{\obs{f}{t_{j}}{t_{j+1}}_0\Big\}_{j=-N}^{N-1}\cup\Big\{\obs{f}{0}{t_j}_0\Big\}_{j\notin\{-1, 0, 1\}}$ by an index set $I$ (with $|I| = 4N-2$), and denote them by $\mathcal{O}_I(f) = \big\{\mathcal{O}_i(f) \big\}_{i\in I}$.

We now study the stochastic process of $\mathcal{O}_I(f)$ together with $\Big\{\obs{f}{t_{j}}{T}_0\Big\}_{j\in\{-N, 0, N\}}$. 
Note that this stochastic process consists of a non-interlaced set of observables, so Lemma~\ref{lmmUniqLineExpExten} holds for them.

Using \eqref{eqCross3P1}, \eqref{eqCross3P2}, and \eqref{eqCross3P3} we can express $\left\{\tfrac{1}{f(t_j)}-\tfrac{1}{f(t_{j+1})}\right\}_{j=-N}^{-2}$ and $\left\{\tfrac{1}{f(t_j)}-\tfrac{1}{f(t_{j+1})}\right\}_{j=1}^{N-1}$ as algebraic expressions of $\mathcal{O}_I(f)$.
Thus, it is possible to write $\{f(t_j)\}_{j\neq 0}$ as algebraic expressions of $\mathcal{O}_I(f)$ and $\{f(t_{-N}), f(t_N)\}$.
Since we also have that
\begin{equation}
\obs{f}{0}{t_j}_0 = \frac{\sqrt{f'(t_j)}}{|f(t_j)|},
\end{equation}
we deduce that $\big\{f(t_i), f'(t_i)\big\}_{i\neq 0}$ can be written as algebraic expressions of $\mathcal{O}_I(f)$ and $\{f(t_{-N}), f(t_N)\}$.

Therefore, we conclude that any continuous bounded function with compact support $G:\R^{4N}\to\R$ of $\big\{f(t_i), f'(t_i)\big\}_{i\neq 0}$ can be expressed as a bounded continuous function $H:\R^{4N}\to \R$ of $\mathcal{O}_I(f)$, $f(t_{-N})$, and $f(t_{N})$.

We denote (recall \eqref{eqCross3P1} and \eqref{eqCross3P2})
\begin{align}
l_1(f, T) &\coloneqq - \frac{\obs{f}{t_{-N}}{T}_0}{\obs{f}{t_{-N}}{0}_0 \obs{f}{0}{T}_0}
= - \left(\frac{1}{f(T)} - \frac{1}{f(t_{-N})}\right)^{-1} \\
l_2(f, T) &\coloneqq \frac{\obs{f}{t_N}{T}_0}{\obs{f}{0}{t_N}_0 \obs{f}{0}{T}_0}
= \left(\frac{1}{f(t_N)} -\frac{1}{f(T)}\right)^{-1}.
\end{align}

Since 
\begin{align}
\lim_{T\to \infty} l_1(f, T) &= f(t_{-N})\\
\lim_{T\to \infty} l_2(f, T) &= f(t_{N}),
\end{align}
by Dominated Convergence Theorem we get
\begin{multline}
\lim_{T\to\infty}
\int H\Big(\mathcal{O}_I(f)\cup\{l_1(f, T), l_2(f, T)\}\Big)\d\mathcal{P}(f)
=
\int H\Big(\mathcal{O}_I(f)\cup\{f(t_{-N}), f(t_{N})\}\Big)\d\mathcal{P}(f)\\
=
\int G\left(\big\{f(t_i), f'(t_i)\big\}_{i\neq 0}\right)\d\mathcal{P}(f)
\end{multline}
and
\begin{multline}
\lim_{T\to\infty}
\int H\Big(\mathcal{O}_I(f)\cup\{l_1(f, T), l_2(f, T)\}\Big)\d\FMInf{\varkappa^2}(f)
=
\int H\Big(\mathcal{O}_I(f)\cup\{f(t_{-N}), f(t_{N})\}\Big)\d\FMInf{\varkappa^2}(f)\\
=
\int G\left(\big\{f(t_i), f'(t_i)\big\}_{i\neq 0}\right)\d\FMInf{\varkappa^2}(f).
\end{multline}
Moreover, by Lemma~\ref{lmmUniqLineExpExten},
\begin{equation}
\int H\Big(\mathcal{O}_I(f)\cup\{l_1(f, T), l_2(f, T)\}\Big)\d\mathcal{P}(f)
=
\int H\Big(\mathcal{O}_I(f)\cup\{l_1(f, T), l_2(f, T)\}\Big)\d\FMInf{\varkappa^2}(f),
\end{equation}
which finishes the proof.
\end{proof}

\subsection{H\"{o}lder characterisation}
\label{sectSchwLineHolder}

\begin{proof}[Proof of Theorem~\ref{thrLocObsHolderLine}]
$ $

\textbf{Direct statement.}
It is sufficient to prove the result only for $t>s$.

In this proof $C_1, C_2, \ldots$ denote constants that depend only on $A$, $\delta$, and $\alpha$ (in particular, they do not depend on $f$).
We also assume $\eps = \eps (A, \delta, \alpha)>0$ is sufficiently small for the argument below to work.

Exponentiating 
\begin{equation}
\left|\log f'(s+\tau) - \log f'(s) \right| \leq A \tau^{\alpha}
\end{equation}
we get
\begin{equation}\label{eqThrHolderProof2}
\left|\frac{f'(s+\tau)}{f'(s)} -1 \right| \leq C_1 \tau^{\alpha}.
\end{equation}
Integrating over $\tau$ from $0$ to $t-s$, we obtain for all $t\in (s, s+\eps)$,
\begin{equation}\label{eqThrHolderProof3}
\left|\frac{f(t)-f(s)}{f'(s)} - |t-s| \right| \leq C_2 |t-s|^{1+\alpha}.
\end{equation}
Using \eqref{eqThrHolderProof2} for $\tau = t-s$, we also have
\begin{equation}\label{eqThrHolderProof4}
\left|\frac{f'(t)}{f'(s)} -1 \right| \leq C_1 |t-s|^{\alpha}
\end{equation} 
Combining \eqref{eqThrHolderProof3} and \eqref{eqThrHolderProof4},
\begin{equation}
\left|\frac{f(t)-f(s)}{\sqrt{f'(t)f'(s)}} - |t-s| \right| \leq C_3 |t-s|^{1+\alpha}.
\end{equation}
Taking the inverse proves the desired result.

\medskip

\textbf{Converse statement.}
It is sufficient to prove the result only for $t>s$.

In this proof $C_1, C_2, \ldots$ denote constants that depend only on $A'$, $\delta'$, and $\alpha$ (in particular, they do not depend on $f$).
We also assume $\eps = \eps (A', \delta', \alpha)>0$ is sufficiently small for the argument below to work.

Using 
\begin{equation}
\frac{\obs{f}{s-\eps}{s}_0 \obs{f}{s}{s+\eps}_0}{\obs{f}{s-\eps}{s+\eps}_0}
=
f'(s)\left( \frac{1}{f(s) - f\left(s-\eps\right)} + \frac{1}{f\left(s+\eps\right) - f(s)}  \right).
\end{equation}
we get that for $f\in \ObsHolLoc{\alpha}(x, \delta', A')$,
\begin{equation}
f'(s)\left( \frac{1}{f(s) - f\left(s-\eps\right)} + \frac{1}{f\left(s+\eps\right) - f(s)}  \right)
\leq 
2\eps^{-1} + C_1 \eps^{\alpha-1}
\end{equation}
Thus,
\begin{equation}\label{eqHolLineProof1}
\frac{f'(s)}{f\left(s+\eps\right) - f(s)} \leq C_2\eps^{-1} .
\end{equation}

Since $f\in \ObsHolLoc{\alpha}(x, \delta', A')$, by considering $\obs{f}{s}{s+\tau}_0^2$ we get
\begin{equation}
\left|\frac{f'(s) f'(s+\tau)}{|f(s+\tau) - f(s)|^2}- \frac{1}{\tau^2}\right| \leq C_3\tau^{\alpha-2}.
\end{equation}
Integrating in $\tau$ from some $\eta \in (0, \eps^{-1/(\alpha-1)})$ to $\eps$ we obtain
\begin{equation}
\left|\frac{f'(s)}{f(s+\eta) - f(s)} - \frac{f'(s)}{f(s+\eps) - f(s)} - \frac{1}{\eta}\right| \leq C_4\eta^{\alpha-1}.
\end{equation}
Using \eqref{eqHolLineProof1} we obtain
\begin{equation}
\left|\frac{f'(s)}{f(s+\eta) - f(s)} - \frac{1}{\eta}\right| \leq C_5\eta^{\alpha-1}.
\end{equation}
Dividing the inequality above by $\obs{f}{s}{s+\eta}_0$ and using that for $f\in \ObsHolLoc{\alpha}(x, \delta', A')$
\begin{equation}
\left|\obs{f}{s}{s+\eta}_0 - \frac{1}{\eta}\right| \leq A' \eta^{\alpha-1},
\end{equation}
we get
\begin{equation}
\left|\sqrt{\frac{f'(s)}{f'(s+\eta)}} - 1\right| \leq C_6\eta^{\alpha}.
\end{equation}
The result now follows by taking the logarithm.
\end{proof}

\section{Local structure proofs}
\label{sectThrLocalProof}

The main goal of this section is to prove Theorem~\ref{thrLocalLimit}.
In Section~\ref{sectThrLocalProofCharacter} we describe the limit of $\PartF_{\sigma^2}^{-1}\cdot \d\EmbTan{\beta}_{\sharp}\FMeas{\sigma^2}$ by its correlation functions.
In Section~\ref{sectThrLocalProofTight} we demonstrate that measures $\PartF_{\sigma^2}^{-1}\cdot \d\EmbTan{\beta}_{\sharp}\FMeas{\sigma^2}$ are tight.
In Section~\ref{sectThrLocalProofFinal} we finish the proof by showing that limits of $\PartF_{\sigma^2}^{-1}\cdot \d\EmbTan{\beta}_{\sharp}\FMeas{\sigma^2}$ and $\PartF_{\sigma^2}^{-1}\cdot \d\Emb{\beta}_{\sharp}\FMeas{\sigma^2}$ coincide.

As explained in Section~\ref{sectProofStrat}, we start by replacing $\Emb{\beta}$ with $\EmbTan{\beta}$, which fits better with the $\SL(2, \R)$ structure of the whole setting.
We use an embedding $\EmbTan{\beta}:\, \Diff^1(\T)/\SL(2, \R) \to \CZPlus(\R) $ given by
\begin{equation}\label{eqDefFTan}
\phi \mapsto \EmbTan{\beta}(\phi)(t) \coloneqq \EmbTan{\beta}_\phi (t)
\coloneqq 
\begin{cases}
\tfrac{\beta}{\pi} \tan\left(\tfrac{\pi}{\beta} \,
\Emb{\beta}_\phi (t)
\right) 
=
\tfrac{\beta}{\pi} \tan\left(\pi \,
\GProj_\phi\big(\tfrac{t}{\beta}\big)
\right)
, &
\qquad \text{if } t\in [-\beta/3, \beta/3];\\
a_1 \, (t+\frac{\beta}{3}) + b_1 , &
\qquad \text{if } t\in (-\infty, -\beta/3); \\
a_2 \, (t-\frac{\beta}{3}) + b_2 , &
\qquad \text{if } t\in (\beta/3, \infty),
\end{cases}
\end{equation}
where $a_1, b_1, a_2, b_2$ are chosen so that $\EmbTan{\beta}_\phi$ is $C^1$ at $-\beta/3$ and $\beta/3$, namely
\begin{align}
a_1 = 
\left.
\frac{\d}{\d s}\left(
\tfrac{\beta}{\pi} \tan\left(\pi \,
\GProj_\phi\big(\tfrac{s}{\beta}\big)
\right)
\right)
\right|_{s = -\beta/3},
\qquad
&
b_1 = 
\left.
\tfrac{\beta}{\pi} \tan\left(\pi \,
\GProj_\phi\big(\tfrac{s}{\beta}\big)
\right)
\right|_{s = -\beta/3},
\\
a_2 = 
\left.
\frac{\d}{\d s}\left(
\tfrac{\beta}{\pi} \tan\left(\pi \,
\GProj_\phi\big(\tfrac{s}{\beta}\big)
\right)
\right)
\right|_{s = \beta/3},
\qquad
&
b_2 = 
\left.
\tfrac{\beta}{\pi} \tan\left(\pi \,
\GProj_\phi\big(\tfrac{s}{\beta}\big)
\right)
\right|_{s = \beta/3}.
\end{align}

Note that
\begin{equation}\label{eqObsToObs_0}
\obs{\phi}{s}{t}
=
\beta \obs{\,\EmbTan{\beta}_{\phi}}{s \beta}{t \beta}_0,
\end{equation}
when $s, t \in [-1/3, 1/3]$.

\subsection{Convergence of correlation functions}
\label{sectThrLocalProofCharacter}

\begin{prp}\label{prpCorrConv}
For $N\geq 0$ let $\big\{\obs{f}{s_{j}}{t_j}_0\big\}_{j=1}^{N}$ be a set of non-interlaced observables, and $\{l_j\}_{j=1}^N$ be a set of positive integers. 
Assume that $\sigma, \beta\to\infty$ with $\sigma^2/\beta \to \varkappa^2$.
Then 
\begin{equation}
\lim_{\sigma, \beta\to \infty}
\PartF_{\sigma^2}^{-1}
\int\limits_{\CZPlus(\R)} \prod_{j=1}^N \obs{f}{s_j}{t_j}_{0}^{l_j}\d\EmbTan{\beta}_{\sharp}\FMeas{\sigma^2}(f)
=
\int\limits_{\CZPlus(\R)} \prod_{j=1}^N \obs{f}{s_j}{t_j}_0^{l_j}\d\FMInf{\varkappa^2}(f).
\end{equation}
\end{prp}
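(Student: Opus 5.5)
Since $\sigma,\beta\to\infty$, for $\beta$ large all points $s_j,t_j$ lie in $[-\beta/3,\beta/3]$. There \eqref{eqObsToObs_0} gives $\obs{\EmbTan{\beta}_\phi}{s_j}{t_j}_0=\beta^{-1}\obs{\phi}{s_j/\beta}{t_j/\beta}$. The left-hand side therefore equals
\begin{equation*}
\PartF_{\sigma^2}^{-1}\,\beta^{-\sum_j l_j}\int \prod_{j=1}^N \obs{\phi}{s_j/\beta}{t_j/\beta}^{l_j}\d\FMeas{\sigma^2}(\phi).
\end{equation*}
The rescaled points $s_j/\beta,t_j/\beta$ lie in an arc of length less than $1$ and remain non-interlaced on the circle. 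We apply Theorem~\ref{thrMainCorrelations_corr} with the same diagram. The bounded domains keep their boundary lengths, rescaled to $\tau_m/\beta$ for $m=1,\dots,N$. The outer domain (index $N+1$ in the circle notation, index $0$ on the line) has boundary length $\tau_{N+1}=1-L/\beta$, where $L$ is the total length of the outer segments inside the convex hull of the points.

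Next we substitute. In the exponentials, $\tfrac{\tau_m}{\beta}\cdot\tfrac{\sigma^2}{2}k_m^2=\tfrac{\tau_m\sigma^2}{2\beta}k_m^2\to \tfrac{\tau_m\varkappa^2}{2}k_m^2$ for $m\le N$. The prefactors $(\sigma^2/2)^{l_j}\beta^{-l_j}\to(\varkappa^2/2)^{l_j}$. What remains is the outer integral over $k_{N+1}=:k_0$:
\begin{equation*}
\PartF_{\sigma^2}^{-1}\int_0^\infty \prod_{j:\,k_0\in\{w_1(j),w_2(j)\}}\Gamma\Big(\tfrac{l_j}{2}\pm i w_1(j)\pm iw_2(j)\Big)\, e^{-\tau_{N+1}\sigma^2k_0^2/2}\sinh(2\pi k_0)\,2k_0\d k_0 .
\end{equation*}
This is where the main work lies. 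Because $\sigma^2\to\infty$ with $\tau_{N+1}\to 1$, the Gaussian factor concentrates at $k_0=0$ at scale $\sigma^{-1}$. Near $0$ we have $\sinh(2\pi k_0)\,2k_0\approx 4\pi k_0^2$, and $\PartF_{\sigma^2}\sim(2\pi)^{3/2}\sigma^{-3}$. The normalised measure $\PartF_{\sigma^2}^{-1}e^{-\tau_{N+1}\sigma^2k_0^2/2}\sinh(2\pi k_0)2k_0\d k_0$ therefore has total mass $\tau_{N+1}^{-3/2}\exp\big(\tfrac{2\pi^2}{\sigma^2}(\tau_{N+1}^{-1}-1)\big)\to1$ and converges weakly to $\delta_0$. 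Pointwise this yields the formula of Theorem~\ref{thrLineCorrelations} with $k_0=0$.

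To justify exchanging limit and integral we use dominated convergence on the full $(N+1)$-fold integral. The main obstacle is a uniform domination. Since $\sigma^2/\beta\ge \varkappa^2/2$ eventually, each factor $e^{-\tau_m\sigma^2k_m^2/(2\beta)}$ for $m\le N$ is dominated by $e^{-\tau_m\varkappa^2k_m^2/4}$. For the $\Gamma$-factors we bound $|\Gamma(\tfrac{l}{2}\pm i w_1\pm iw_2)|$ by Stirling: it is at most $C(1+|w_1|+|w_2|)^{2l-2}e^{-2\pi\max(|w_1|,|w_2|)}$, which is uniform in $k_0$ on compacts and at most polynomially growing. The outer factor is handled by splitting $k_0\le 1$ from $k_0>1$. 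The region $k_0>1$ is exponentially negligible after normalising by $\sigma^{-3}$, since $e^{-\sigma^2/4}\sigma^3\to0$ and $\sinh(2\pi k_0)$ is killed by $e^{-\sigma^2k_0^2/4}$. On $k_0\le1$ the inner integrand is continuous and dominated uniformly in $k_0$, because the inner Gaussians supply decay in $k_m$ while $\sinh(2\pi k_m)$ is compensated by the exponential $\Gamma$-decay, exactly as in the absolute convergence in Theorem~\ref{thrLineCorrelations}. A weak-convergence-to-$\delta_0$ argument then gives the limit. For $N=0$ the statement reduces to the normalisation $\PartF_{\sigma^2}^{-1}\PartF_{\sigma^2}=1$.

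An alternative, if the Fourier bookkeeping proves unwieldy, is to use Lemma~\ref{lmmGlobalUniqFuncEqual}-type representations. These express circle expectations as $a$-integrals of the interval formula, Proposition~\ref{prpIntervalExpObsFormula_corr}, and one would then pass to the limit there instead. I would try the direct route first.
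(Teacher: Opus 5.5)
Your proposal is correct and follows the paper's route. You rewrite the line observables via \eqref{eqObsToObs_0} as rescaled circle observables, apply Theorem~\ref{thrMainCorrelations_corr}, use that the $\PartF_{\sigma^2}^{-1}$-normalised weight $e^{-\tau_{N+1}\sigma^2k_0^2/2}\sinh(2\pi k_0)\,2k_0\,\d k_0$ converges to $\delta_0$, and match the result with Theorem~\ref{thrLineCorrelations}; your domination argument (Stirling bounds on the $\Gamma$-factors, Gaussian control uniform for $\sigma^2/\beta\ge\varkappa^2/2$, joint continuity of the inner integral in $k_0$ and $\sigma^2/\beta$) supplies detail the paper leaves implicit, and the one slip, that $L$ in $\tau_{N+1}=1-L/\beta$ should be $\sum_{m=1}^N\tau_m$ rather than the outer-segment length, is harmless since only $\tau_{N+1}\to1$ is used.
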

\begin{proof}
From \eqref{eqObsToObs_0} and the explicit formula for the correlation functions of $\FMeas{\sigma^2}$ (see Theorem~\ref{thrMainCorrelations_corr}),
\begin{multline}
\int\limits_{\CZPlus(\R)} \prod_{j=1}^N \obs{f}{s_j}{t_j}_{0}^{l_j}\d\EmbTan{\beta}_{\sharp}\FMeas{\sigma^2}(f)
=
\int_{\R_+^{N+1}} 
\prod_{j=1}^N \frac{\Gamma\left(\dfrac{l_j}{2}\pm i w_1(j) \pm i w_2(j)\right)}{2\pi^2 \, \Gamma(l_j)}
\cdot\left(\frac{\sigma^2}{2\beta}\right)^{l_j}\\
\times
\prod_{m=0}^{N}\exp\left(-\frac{\tau_m \sigma^2}{2\beta}\cdot k_m^2\right)  \sinh(2\pi k_m) \, 2 k_m \d k_m,
\end{multline}
whenever $\beta > 10 \max\{|s_j|, |t_j|\}_{j=1}^N$.
In the formula above we use the diagrammatic language for chords $\{(s_j, t_j)\}_{j=1}^N$ on the circle of size $\beta$, meaning that $\sum_{m=0}^{N}\tau_m = \beta$.
In particular, in the $\sigma, \beta\to\infty$ limit we have that $\{\tau_m\}_{m=1}^N$ stay fixed, while $\tau_0/\beta \to 1$.
Notice that for any continuous and bounded $H:\R\to \R$,
\begin{equation}
\lim_{\sigma, \beta\to \infty} \PartF_{\sigma^2}^{-1}\int H(k_0) \exp\left(-\frac{\tau_0 \sigma^2}{2\beta}\cdot k_0^2\right)  \sinh(2\pi k_0) \, 2 k_0 \d k_0
= H(0).
\end{equation}
Together with Theorem~\ref{thrLineCorrelations} this finishes the proof.
\end{proof}

\subsection{Tightness}
\label{sectThrLocalProofTight}

The aim of this section is to prove Proposition~\ref{prpTightnessLoc}.
We do this by showing that \eqref{eqObsHolderLine} holds with large probability with respect to $\mathcal{Z}_{\sigma^2}^{-1}\cdot \EmbTan{\beta}_{\sharp}\FMeas{\sigma^2}$ when $\sigma$ and $\beta$ are large.
In Section~\ref{sectObsDevEst} we demonstrate that \eqref{eqObsHolderLine} holds with large probability for just one observable.
In Section~\ref{sectObsInterpol} we show that, with large probability, \eqref{eqObsHolderLine} holds simultaneously for all $s$ and $t$ close enough.

\subsubsection{Observables deviation estimates}
\label{sectObsDevEst}
\begin{prp}\label{prpBoundExpMomentsLine}
There exists $C>1$ such that for any $\sigma>0$, any $s \neq t\in \T$,
and any $\lambda\in [-C^{-1}, C^{-1}]$, we have
\begin{equation}\label{eqMomentGenFuncBoundLine}
\int \exp\left\{\frac{2\lambda}{\sigma^2} \left(\obs{\phi}{s}{t} -\frac{1}{|t- s|}\right)\right\}\d \FMeas{\sigma^2}(\phi)
<\left(1-C|\lambda|\right)^{-1/2}
\exp\left\{\frac{C \lambda^2}{4|t-s|\sigma^2}\right\}
\mathcal{Z}\Big((1-|t-s|)\sigma^2\Big).
\end{equation}
\end{prp}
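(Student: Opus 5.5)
The plan is to reduce the statement to an estimate for a single Brownian bridge on an interval of length $T:=|t-s|$. By rotation invariance I take $s=0$ and $t=T$. The single-chord case of Theorem~\ref{thrMainCorrelations_corr} has two domains, with $\tau_1=T$ and $\tau_2=1-T$. This structure corresponds to conditioning the measure on the arc $(0,T)$, which gives a Schwarzian measure on an interval of length $T$ glued to one of length $1-T$. Concretely, I would first establish the following disintegration. For bounded continuous $F$ (and then by monotone convergence for $F\ge0$),
\begin{equation*}
\int F\big(\obs{\phi}{0}{T}\big)\d\FMeas{\sigma^2}(\phi)=\int_{\R} K_{1-T}(a)\int F\Big(\obs{\B_\xi}{0}{T}_0\Big)\d\WS{\sigma^2}{a}{T}(\xi)\,\d a .
\end{equation*}
Here the kernel is $K_{1-T}(a)=\int_0^\infty \cos(ak)\,e^{-(1-T)\sigma^2k^2/2}\sinh(2\pi k)\,2k\,\d k$, suitably interpreted. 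To prove this identity I would match all moments: combine Proposition~\ref{prpIntervalExpObsFormula_corr} with $N=1$ and Fourier inversion in $a$, so that the $\cos(ak_0)/\pi$ factor produces $k_0=k_2$, and compare with Theorem~\ref{thrMainCorrelations_corr}. Proposition~\ref{prpExpMomentCirc_corr} supplies the exponential moments needed to pass from moments to distributions. The same identity with $F\equiv1$ explains the factor $\mathcal{Z}\big((1-T)\sigma^2\big)$ on the right-hand side of \eqref{eqMomentGenFuncBoundLine}: after bounding the inner bridge integral uniformly in $a$ by its free (Gaussian) normalisation, the remaining $a$-integral of $K_{1-T}$ against that normalisation reproduces $\mathcal{Z}\big((1-T)\sigma^2\big)$ via the formula in Proposition~\ref{prpMainPartFunct}.

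Next I would bound the inner integral. Writing $\xi(u)=\tfrac{u}{T}a+\eta(u)$ with $\eta$ a bridge from $0$ to $0$, we have
\begin{equation*}
\obs{\B_\xi}{0}{T}_0=\frac{e^{a/2}}{\int_0^Te^{\xi}}=\frac{1}{\int_0^T e^{\eta(u)+a(u/T-1/2)}\d u}.
\end{equation*}
By Jensen's inequality this is at most $T^{-1}e^{-G}$, where $G=T^{-1}\int_0^T\eta$ is centred Gaussian with variance $\sigma^2T/12$. The tilt $a(u/T-1/2)$ has mean zero on $[0,T]$, so it only helps. For $\lambda<0$ I would use the reverse bound obtained from Jensen applied to $1/x$, or simply the trivial bound $\obs{}{}{}\ge 0$ together with a second-order expansion. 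In the bulk region $|G|\le 1$ I would use $e^{-G}-1\le -G+G^2$. The resulting exponent is $\tfrac{2\lambda}{\sigma^2T}(-G+G^2)$, and its Gaussian integral is explicit. The linear term gives $\exp\{C\lambda^2/(T\sigma^2)\}$. The quadratic term gives a determinant factor $\big(1-\tfrac{4\lambda}{\sigma^2T}\cdot\tfrac{\sigma^2T}{12}\big)^{-1/2}$, which has the form $(1-C|\lambda|)^{-1/2}$. This is exactly where the constants in \eqref{eqMomentGenFuncBoundLine} come from.

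The main obstacle is the tail region $G<-1$. There $e^{-G}$ grows exponentially, so an integrand of the form $\exp\{c\,e^{-G}/(\sigma^2T)\}$ is not controlled by the Gaussian tail of $G$ through Jensen alone. I would handle this region by Cauchy--Schwarz. One factor is $\mathbb{P}(G<-1)^{1/2}\le e^{-3/(\sigma^2T)}$. The other factor is an exponential moment of the observable, which Proposition~\ref{prpObsExpMomentBound_corr} controls since $2\lambda\cdot 2\le 8$ for $|\lambda|\le C^{-1}$. That bound carries the factor $e^{2000/(T\sigma^2)}$, which beats the Gaussian tail only when $\sigma^2T$ is large. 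When $\sigma^2T$ is small I would therefore instead use the cruder splitting $|G|\le\sqrt{\sigma^2T}\,M$, with $M$ large, in place of $|G|\le1$. Making the constants in these two regimes match a single $C$ uniformly in $\sigma$ and $T$ is the delicate bookkeeping step.
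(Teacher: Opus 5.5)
\textbf{The gap: uniform-in-$a$ bounding fails for $\lambda<0$.} You plan to bound the inner bridge integral uniformly in the endpoint $a$ and to extract $(1-C|\lambda|)^{-1/2}$ from the $G^2$ term. For $\lambda<0$ this cannot work.

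Put $x=\sigma^2T$. Then $T\,\mathcal{O}_0=\big(\int_0^1 e^{\tilde\eta(r)+a(r-1/2)}\,\d r\big)^{-1}$, where $\tilde\eta$ is a bridge of variance $x$ on $[0,1]$. This tends to $\frac{a/2}{\sinh(a/2)}$ as $x\to 0$ with $a$ fixed. By Jensen, the normalised conditional moment generating function at endpoint $a$ is therefore at least $\exp\{\frac{2|\lambda|}{x}(1-\frac{a/2}{\sinh(a/2)})+O(|\lambda|)\}$. At $a=2$ this is roughly $e^{0.3|\lambda|/x}$, which exceeds $(1-C|\lambda|)^{-1/2}e^{C\lambda^2/(4x)}$ as $x\to 0$ for every $\lambda\in(-C^{-1},0)$.

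So the tilt does not ``only help''. For $\lambda<0$ it is exactly the source of $(1-C|\lambda|)^{-1/2}$: since $1-\frac{a/2}{\sinh(a/2)}\le a^2/24$, integrating $e^{|\lambda|a^2/(12x)}$ against the endpoint density $(2\pi x)^{-1/2}e^{-a^2/(2x)}$ gives $(1-|\lambda|/6)^{-1/2}$.

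This is what the paper's proof does. In Proposition~\ref{prpAppExpEst_LDP} the correlation series of Theorem~\ref{thrMainCorrelations_corr} is resummed into $\cos(2k_1\arccosh[\cosh(\alpha/2)-\lambda/2])$, and the Gaussian $k_1$-integral is done exactly; the variable $\alpha$ is your endpoint $a$. Statement~\ref{stmArccoshTaylor_LDP} then bounds the exponent by $-\frac{\alpha^2}{2t\sigma^2}+\frac{2\lambda}{t\sigma^2}+\frac{C_1|\lambda|(\alpha^2+|\lambda|)}{2t\sigma^2}$. Its $\alpha$-integral produces $(1-C_1|\lambda|)^{-1/2}$, and the linear term cancels the centring $-1/t$. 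Your tools for $\lambda<0$ do not reach this: $\mathcal{O}_0\ge 0$ only gives $e^{2|\lambda|/x}$, and Jensen bounds $\mathcal{O}_0$ only from above.

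\textbf{Other steps that need repair.}

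The kernel is signed. With $b=(1-T)\sigma^2$ one computes $K_{1-T}(a)=\sqrt{2\pi}\,b^{-3/2}e^{(4\pi^2-a^2)/(2b)}\big(2\pi\cos\frac{2\pi a}{b}-a\sin\frac{2\pi a}{b}\big)$. So pointwise bounds in $a$ cannot be integrated against it, and passing from bounded $F$ to the exponential needs dominated convergence with Proposition~\ref{prpObsExpMomentBound_corr}, not monotone convergence. Moreover, $\int_{\R}K_{1-T}(a)\,(2\pi T\sigma^2)^{-1/2}e^{-a^2/(2T\sigma^2)}\,\d a=\mathcal{Z}(\sigma^2)$, not $\mathcal{Z}((1-T)\sigma^2)$. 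The latter factor comes from $|K_{1-T}(a)|\le K_{1-T}(0)$, the counterpart of the paper's $\cos(k_2\alpha)\le 1$.

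The normalised bound must tend to $1$ as $\lambda\to 0$. Cauchy--Schwarz applied to $e^{X}$ on $\{G<-1\}$ contributes $\mathbb{P}(G<-1)^{1/2}$ at $\lambda=0$ instead of $\mathbb{P}(G<-1)$, an excess that is not $O(|\lambda|)$; you must apply it to $e^{X}-1$.

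For large $\sigma^2T$, $\{G<-1\}$ has probability near $1/2$ and $\mathbb{E}e^{-G}=e^{\sigma^2T/24}$. The Taylor expansion at $G=0$ is then not where the estimate comes from; it must come from Proposition~\ref{prpObsExpMomentBound_corr}, whose factor $e^{2000/(T\sigma^2)}$ is then harmless. For small $\sigma^2T$, shrinking the bulk to $|G|\le M\sqrt{\sigma^2T}$ leaves a tail of fixed probability against $e^{2000/(T\sigma^2)}$, which goes the wrong way. What works there is H\"older with a small exponent $\theta$, so that $e^{2000\theta/(T\sigma^2)}$ is beaten by $\mathbb{P}(G<-1)\le e^{-6/(T\sigma^2)}$.
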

\begin{proof}
By rotational invariance of $\d\FMeas{\sigma^2}$, without loss of generality we can assume that $s=0$.

In \citep{LosevLDP} it was proved that for some $C_1>1$ and all $\lambda \in [-C_1^{-1}, C_1^{-1}]$,
\begin{multline}
\int \exp\left\{\frac{2\lambda}{\sigma^2} \left(\obs{\phi}{0}{t} -\frac{1}{t}\right)\right\}\d \FMeas{\sigma^2}(\phi) \\
\leq
\frac{\sqrt{2}}{\sqrt{\pi t} \, \sigma}
\int_{\R_+^2}
\exp\left\{ -\frac{\alpha^2}{2t\sigma^2}
+ \frac{C_1|\lambda|(\alpha^2+ |\lambda|)}{2t\sigma^2}
\right\}
\exp\left(-\frac{(1-t)\sigma^2}{2}\cdot k_2^2\right)
\sinh(2\pi k_2)\, 2k_2 \d k_2
\d \alpha.
\end{multline}
For the reader's convenience, we recall the proof of this estimate in the Appendix, see Proposition~\ref{prpAppExpEst_LDP}.
Integrating over $\alpha$ and $k_2$,
\begin{multline}
\frac{\sqrt{2}}{\sqrt{\pi t} \, \sigma}
\int_{\R_+^2}
\exp\left\{ -\frac{\alpha^2}{2t\sigma^2}
+ \frac{C_1|\lambda|(\alpha^2+ |\lambda|)}{2t\sigma^2}
\right\}
\exp\left(-\frac{(1-t)\sigma^2}{2}\cdot k_2^2\right)
\sinh(2\pi k_2)\, 2k_2 \d k_2
\d \alpha\\
\leq
\left(1-C_1|\lambda|\right)^{-1/2}
\exp\left\{\frac{C_1 \lambda^2}{4t\sigma^2}\right\}
\mathcal{Z}\Big((1-t)\sigma^2\Big),
\end{multline}
which finishes the proof.

\end{proof}

The following corollary allows to control deviations of observables, which appear in the definition of the Local H\"{o}lder Condition for Observables (see Definition~\ref{defnHolderLine}).
\begin{crl}\label{crlObsTailBoundLine}
There exists $C>1$ such that for any $M, \beta, \sigma>0$, and $s\neq t\in \R$ with $|t-s|
<\min\{1, \, \beta/2\}
$,
\begin{multline}
\FMeas{\sigma^2} \left(\phi\in \Diff^1(\T)/\SL(2, \R)\,:\, \left|\, \obs{ \tfrac{\beta}{\pi}\tan \left(\tfrac{\pi}{\beta}\Emb{\beta}(\phi)\right)}{s}{t}_{0}-\frac{1}{|t-s|} \, \right|>M\, |t-s|^{-\tfrac{1}{2}}\right)\\
<
C \cdot
\mathcal{Z}\left(\left(1-\frac{|t-s|}{\beta}\right)\sigma^2\right)
\cdot
\exp\left(\frac{\beta}{\sigma^2}\left(C -C^{-1}M\right)\right).
\end{multline}
\end{crl}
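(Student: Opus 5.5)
The plan is to combine the scaling identity \eqref{eqObsToObs_0} with Proposition~\ref{prpBoundExpMomentsLine} and a Chernoff (exponential Markov) bound.

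\textbf{Step 1: reduce to the circle observable.} Write $u = s/\beta$ and $v = t/\beta$, so that $|v-u| = |t-s|/\beta < 1/2$. The function $\tfrac{\beta}{\pi}\tan(\tfrac{\pi}{\beta}\Emb{\beta}_\phi)$ equals $\tfrac{\beta}{\pi}\tan(\pi\GProj_\phi(\cdot/\beta))$ wherever it is finite. The same computation that gives \eqref{eqObsToObs_0} then shows
\begin{equation}
\obs{\tfrac{\beta}{\pi}\tan(\tfrac{\pi}{\beta}\Emb{\beta}_\phi)}{s}{t}_0 = \beta^{-1}\obs{\phi}{u}{v}.
\end{equation}
This identity holds for arbitrary $s,t$, because the $\SL(2,\R)$-invariance of Proposition~\ref{prpObsSLInvar} absorbs any poles of $\tan$ between $s$ and $t$; the restriction to $[-1/3,1/3]$ in \eqref{eqObsToObs_0} is only needed for $\EmbTan{\beta}$. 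Also $\tfrac{1}{|t-s|} = \beta^{-1}\tfrac{1}{|v-u|}$. Hence the event in question is
\begin{equation}
\Big\{\big|\obs{\phi}{u}{v} - |v-u|^{-1}\big| > M\beta|t-s|^{-1/2}\Big\}.
\end{equation}

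\textbf{Step 2: Chernoff bound.} Apply exponential Markov to each sign separately, using Proposition~\ref{prpBoundExpMomentsLine} with $\lambda = \pm\lambda_0$ for a small $\lambda_0$ to be chosen. For the upper tail this gives
\begin{equation}
\FMeas{\sigma^2}(\cdots) \le e^{-\frac{2\lambda_0}{\sigma^2}M\beta|t-s|^{-1/2}}\,(1-C\lambda_0)^{-1/2}\exp\Big\{\frac{C\lambda_0^2}{4|v-u|\sigma^2}\Big\}\,\mathcal{Z}\big((1-|v-u|)\sigma^2\big).
\end{equation}
Substituting $|v-u| = |t-s|/\beta$, the quadratic term becomes $\frac{C\lambda_0^2\beta}{4|t-s|\sigma^2}$, and the factor $\mathcal{Z}$ is exactly $\mathcal{Z}\big((1-\tfrac{|t-s|}{\beta})\sigma^2\big)$. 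The lower tail is identical.

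\textbf{Step 3: choose $\lambda_0$ — the main obstacle.} The two exponents must be balanced so that the result has the form $\frac{\beta}{\sigma^2}(C - C^{-1}M)$. The issue is the factor $|t-s|^{-1}$ in the quadratic term, which blows up as $|t-s|\to 0$. The fix is to take $\lambda_0$ proportional to $|t-s|^{1/2}$, namely $\lambda_0 = C_0^{-1}|t-s|^{1/2}$. This lies in the admissible range $[-C^{-1},C^{-1}]$ because $|t-s|<1$. With this choice:
\begin{itemize}
\item the quadratic exponent becomes $\frac{C\beta}{4C_0^2\sigma^2}$, which is bounded by a constant times $\beta/\sigma^2$;
\item the linear exponent becomes $-\frac{2M\beta}{C_0\sigma^2}$, whose factors $|t-s|^{1/2}$ and $|t-s|^{-1/2}$ cancel;
\item the prefactor $(1-C\lambda_0)^{-1/2}$ is bounded by an absolute constant.
\end{itemize}
Summing the two tails and enlarging $C$ then yields the stated bound, with $C$ independent of $M,\beta,\sigma,s,t$.
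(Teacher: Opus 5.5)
Your proof is correct and is essentially the paper's argument: the scaling identity $\obs{\phi}{s/\beta}{t/\beta} = \beta\,\obs{f}{s}{t}_0$ for $f = \tfrac{\beta}{\pi}\tan(\tfrac{\pi}{\beta}\Emb{\beta}_\phi)$, then Proposition~\ref{prpBoundExpMomentsLine} with $\lambda$ of order $|t-s|^{1/2}$ (the paper takes $\lambda = 10^{-1}C^{-1}|t-s|^{1/2}$), then the exponential Markov inequality. You also rightly note that the identity holds for all $s,t$ with the $\tan$ form. This matters because the statement does not restrict $s,t$ to $[-\beta/3,\beta/3]$, and the paper's proof quotes the identity for $\EmbTan{\beta}$, which only holds on that interval.
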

In this corollary one should think of $M$ being very large, $s$ and $t$ close, and $\sigma$ and $\beta$ large with $\sigma^2/\beta \approx \varkappa^2 \in (0, \infty)$.
In this regime 
\begin{equation}
\mathcal{Z}\left(\left(1-\frac{|t-s|}{\beta}\right)\sigma^2\right) \sim \mathcal{Z}(\sigma^2).
\end{equation}

\begin{proof}[Proof of Corollary~\ref{crlObsTailBoundLine}]
From identity
\begin{equation}
\beta \, \obs{\, \EmbTan{\beta}(\phi)}{s}{t}_{0}
=
\obs{\phi}{s/\beta}{t/\beta},
\end{equation}
and Proposition~\ref{prpBoundExpMomentsLine}, we get that for some $C>1$ and $\lambda \in [-C^{-1}, C^{-1}]$,
\begin{multline}
\int \exp\left\{2\lambda \, \frac{\beta}{\sigma^2} \left(\obs{ \tfrac{\beta}{\pi}\tan \left(\tfrac{\pi}{\beta}\Emb{\beta}_{\phi}\right)}{s}{t}_{0} -\frac{1}{|t- s|}\right)\right\}\d \FMeas{\sigma^2}(\phi)\\
<\left(1-C\lambda\right)^{-1/2}
\exp\left\{\frac{C \lambda^2}{4|t-s|}\cdot\frac{\beta}{\sigma^2}\right\}
\mathcal{Z}\left(\left(1-\frac{|t-s|}{\beta}\right)\sigma^2\right).
\end{multline}
Now taking $\lambda = 10^{-1} \, C^{-1}|t-s|^{\frac{1}{2}}$ and using Markov's Inequality for moment generating functions gives the desired result.
\end{proof}

\subsubsection{Observables interpolation}
\label{sectObsInterpol}

\begin{lmm}\label{lmmObsAlgEquality}
Let $\tau_1, \tau_2, \tau_3, \tau_4$ be distinct points that lie on $\R$ in either increasing or decreasing order. 
Then for any $f \in \CZPlus(\R)$
\begin{equation}\label{lmmObsAlgEqualityEq}
\frac{1}{\obs{f}{\tau_1}{\tau_3}_{0}\obs{f}{\tau_2}{\tau_4}_{0}} = 
\frac{1}{\obs{f}{\tau_1}{\tau_2}_{0}\obs{f}{\tau_3}{\tau_4}_{0}}
+\frac{1}{\obs{f}{\tau_1}{\tau_4}_{0}\obs{f}{\tau_2}{\tau_3}_{0}}.
\end{equation}
\end{lmm}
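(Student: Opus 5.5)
The plan is to reduce the identity to an algebraic identity in the values $x_i \coloneqq f(\tau_i)$. By definition~\eqref{eqObs_0_def},
\begin{equation}
\frac{1}{\obs{f}{\tau_i}{\tau_j}_0} = \frac{|f(\tau_j)-f(\tau_i)|}{\sqrt{f'(\tau_i)f'(\tau_j)}}.
\end{equation}
In each of the three products appearing in~\eqref{lmmObsAlgEqualityEq}, every index $1,2,3,4$ occurs exactly once. Hence each term carries the same factor $\big(f'(\tau_1)f'(\tau_2)f'(\tau_3)f'(\tau_4)\big)^{-1/2}$, which is finite and positive since $f'>0$ on $\R$. After multiplying through by $\sqrt{f'(\tau_1)f'(\tau_2)f'(\tau_3)f'(\tau_4)}$, the claim becomes
\begin{equation}
|x_3-x_1|\,|x_4-x_2| = |x_2-x_1|\,|x_4-x_3| + |x_4-x_1|\,|x_3-x_2|.
\end{equation}

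The next step removes the absolute values. Since $f\in\CZPlus(\R)$ has $f'>0$, it is strictly increasing. If the $\tau_i$ are in increasing order, then $x_1<x_2<x_3<x_4$. If they are in decreasing order, then $x_1>x_2>x_3>x_4$; replacing every $x_i$ by $-x_i$ changes nothing, since only absolute values of differences appear, so this case reduces to the increasing one. With $x_1<x_2<x_3<x_4$, all absolute values can be dropped with the signs as written, and it remains to verify
\begin{equation}
(x_3-x_1)(x_4-x_2) = (x_2-x_1)(x_4-x_3) + (x_4-x_1)(x_3-x_2).
\end{equation}

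This is the classical Ptolemy-type identity for four collinear points, and it can be checked by direct expansion. The left side is $x_3x_4 - x_2x_3 - x_1x_4 + x_1x_2$. On the right, the first product is $x_2x_4 - x_2x_3 - x_1x_4 + x_1x_3$ and the second is $x_3x_4 - x_2x_4 - x_1x_3 + x_1x_2$. Adding these, the terms $\pm x_2x_4$ and $\pm x_1x_3$ cancel, leaving exactly the left side.

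There is no real obstacle in this argument. The only point needing care is that the ordering hypothesis is what fixes the signs of the differences. Without it, for example if the points were interlaced differently, the identity with absolute values would fail.
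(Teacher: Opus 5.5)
Your proof is correct. Each of the three products pairs $\{1,2,3,4\}$ into a perfect matching, so the factor $\big(f'(\tau_1)f'(\tau_2)f'(\tau_3)f'(\tau_4)\big)^{-1/2}$ is common to every term. What remains is Ptolemy's identity for four ordered points on a line, and your expansion verifies it.

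The paper reaches the same algebra by a different route. It starts from the three-point formula $\mathcal{O}(f;u,v)_0\,\mathcal{O}(f;u,w)_0/\mathcal{O}(f;v,w)_0 = f'(u)\big(\tfrac{1}{f(v)-f(u)}-\tfrac{1}{f(w)-f(u)}\big)$ for $u<v<w$. It applies this to the triples $(\tau_1,\tau_2,\tau_3)$ and $(\tau_1,\tau_3,\tau_4)$, and notes that the sum telescopes to the same expression for $(\tau_1,\tau_2,\tau_4)$. It then divides by $\mathcal{O}(f;\tau_1,\tau_2)_0\,\mathcal{O}(f;\tau_1,\tau_3)_0\,\mathcal{O}(f;\tau_1,\tau_4)_0$.

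Your version is more self-contained. It shows directly why the derivatives cancel and where the ordering hypothesis enters, namely in fixing the signs of the differences. The paper's version anchors everything at $\tau_1$ and reuses the difference-of-reciprocals mechanism of \eqref{eqCross3P1}, which it also relies on elsewhere, e.g.\ in the uniqueness proof.
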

\begin{proof}
Without loss of generality we can assume that $\{\tau_j\}_j$ is an increasing sequence.
Notice that if $u, v, w$ lie on $\R$ in increasing order, then
\begin{multline}
\frac{\obs{f}{u}{v}_{0}\obs{f}{u}{w}_{0}}{\obs{f}{v}{w}_{0}} 
= \frac{f'(u) \, |f(w)-f(v)|}{|f(v)-f(u)| \cdot |f(w)-f(u)|} \\
= f'(u)\Biggl(\frac{1}{|f(v)-f(u)|}-  \frac{1}{|f(w)-f(u)|} 
\Biggl).
\end{multline}
Therefore,
\begin{equation}
\frac{\obs{f}{\tau_1}{\tau_2}_{0} \obs{f}{\tau_1}{\tau_3}_{0}}{\obs{f}{\tau_2}{\tau_3}_{0}} 
+\frac{\obs{f}{\tau_1}{\tau_4}_{0}\obs{f}{\tau_1}{\tau_3}_{0}}{\obs{f}{\tau_3}{\tau_4}_{0}} 
=
\frac{\obs{f}{\tau_1}{\tau_2}_{0}\obs{f}{\tau_1}{\tau_4}_{0}}{\obs{f}{\tau_2}{\tau_4}_{0}},
\end{equation}
which is equivalent to \eqref{lmmObsAlgEqualityEq}.
\end{proof}

\begin{lmm}\label{lmmObsIneq}
Assume that $s, s_1, t_1, t, t_2$ are points that lie on $\R$ in either increasing or decreasing order. 
Then for any $f \in \CZPlus(\R)$
\begin{equation}
\frac{\obs{f}{s}{t_1}_{0}\obs{f}{s_1}{t}_{0}}{\obs{f}{s_1}{t_1}_{0}}
<\obs{f}{s}{t}_{0}
< \frac{\obs{f}{s}{t_2}_{0} \obs{f}{s_1}{t}_{0}}{\obs{f}{s_1}{t_2}_{0}}.
\end{equation}
\end{lmm}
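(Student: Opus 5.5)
The plan is to reduce both inequalities to elementary inequalities between the values of $f$ at the five points. The derivative factors cancel identically, and strict monotonicity of $f$ does the rest.

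\textbf{Reduction to increasing order.} Each observable $\obs{f}{u}{v}_0=\sqrt{f'(u)f'(v)}/|f(v)-f(u)|$ is symmetric in its two arguments. So if the points $s,s_1,t_1,t,t_2$ lie in decreasing order, relabelling them in increasing order gives exactly the same expressions. We may therefore assume $s<s_1<t_1<t<t_2$. Since $f\in\CZPlus(\R)$ has $f'>0$, $f$ is strictly increasing. Write
\begin{equation*}
a=f(s)<b=f(s_1)<c=f(t_1)<d=f(t)<e=f(t_2).
\end{equation*}

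\textbf{Cancelling the derivatives.} In $\obs{f}{s}{t_1}_0\obs{f}{s_1}{t}_0/\obs{f}{s_1}{t_1}_0$ the square-root derivative factors combine to $\sqrt{f'(s)f'(t)}$. The same holds for $\obs{f}{s}{t_2}_0\obs{f}{s_1}{t}_0/\obs{f}{s_1}{t_2}_0$, and this is also the derivative factor of $\obs{f}{s}{t}_0$. Dividing through by $\sqrt{f'(s)f'(t)}>0$, the claimed chain of inequalities becomes
\begin{equation*}
\frac{c-b}{(c-a)(d-b)}<\frac{1}{d-a}<\frac{e-b}{(e-a)(d-b)}.
\end{equation*}

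\textbf{Checking the two inequalities.} All denominators are positive, so we may cross-multiply.

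For the left inequality, a direct expansion gives
\begin{equation*}
(c-a)(d-b)-(c-b)(d-a)=(d-c)(b-a),
\end{equation*}
which is strictly positive because $b>a$ and $d>c$.

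For the right inequality, similarly,
\begin{equation*}
(e-b)(d-a)-(e-a)(d-b)=(e-d)(b-a),
\end{equation*}
which is strictly positive because $b>a$ and $e>d$.

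Alternatively, one could derive these identities from Lemma~\ref{lmmObsAlgEquality} applied to suitable quadruples of the five points. However, the direct expansion is shorter.

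There is no real obstacle here. The only point requiring care is that strict monotonicity, coming from $f'>0$, is what makes the inequalities strict, and that the ordering of the five points determines the correct signs.
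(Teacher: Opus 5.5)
Your proof is correct and is essentially the paper's argument. The paper obtains both bounds by applying the Ptolemy-type identity of Lemma~\ref{lmmObsAlgEquality} to the quadruples $(s,s_1,t,t_2)$ and $(s,s_1,t_1,t)$ and dropping a positive term. Your identities $(c-a)(d-b)-(c-b)(d-a)=(b-a)(d-c)$ and $(e-b)(d-a)-(e-a)(d-b)=(b-a)(e-d)$ are exactly that identity written in the values of $f$ once the derivative factors cancel.

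The only wording to fix is the reduction from decreasing to increasing order, which the symmetry of each observable in its two arguments does not justify. It follows instead from the reflection $f\mapsto -f(-\,\cdot\,)$, which preserves $\CZPlus(\R)$ and carries each observable at the original points to the same observable at the reflected points. Alternatively, note that all the differences change sign together.
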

\begin{proof}

It follows from Lemma~\ref{lmmObsAlgEquality} that
\begin{multline}
\frac{1}{
\obs{f}{s}{t}_{0}} 
= \obs{f}{s_1}{t_2}_{0}\Biggl(\frac{1}{\obs{f}{s}{t_2}_{0}\obs{f}{s_1}{t}_{0}}
+\frac{1}{\obs{f}{s}{s_1}_{0}\obs{f}{t}{t_2}_{0}}\Biggl)\\
>\frac{\obs{f}{s_1}{t_2}_{0}}{\obs{f}{s}{t_2}_{0}\obs{f}{s_1}{t}_{0}},
\end{multline}
and
\begin{multline}
\frac{1}{\obs{f}{s}{t}_{0}}
=
\obs{f}{s_1}{t_1}_{0}
\Biggl(\frac{1}{\obs{f}{s}{t_1}_{0}\obs{f}{s_1}{t}_{0}}
-\frac{1}{\obs{f}{s}{s_1}_{0}\obs{f}{t_1}{t}_{0}}
\Biggl) \\
< \frac{\obs{f}{s_1}{t_1}_{0}}{\obs{f}{s}{t_1}_{0}\obs{f}{s_1}{t}_{0}},
\end{multline}
which finishes the proof.
\end{proof}

\begin{lmm}\label{lmmObsInterpolationDyadic}
Assume that $A, \delta>0$ and $x\in\R$.
Denote $\Dyadic{n} = \left\{k/2^n \, : \, k\in \Z \right\} \cap (x-\delta, x+\delta)$. 
Let $\Aset_n$ be the event
\begin{multline}
\Aset_n = \Biggl\{f \in \CZPlus(\R) \, :\, \Big| \, \log \left[\obs{f}{s}{t}_{0}\right]+\log |t-s|\, \Big| < A \, |t-s|^{\alpha}, \\
\text{for all } s,t\in \Dyadic{n} \text{ such that } |t-s| \leq 100 \cdot 2^{-n/2} 
\Biggl\}.
\end{multline}
Then there exists $B=B(A, \delta, \alpha)>0$ such that for any $N>0$, there exists $\eps=\eps(N, A, \delta, \alpha)>0$ for which
\begin{equation}
\ObsHolLoc{\alpha} \! \left(x, \eps, B \right) \supset (\cap_{n \geq N} \, \Aset_n).
\end{equation}
\end{lmm}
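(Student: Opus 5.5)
We run a chaining argument in the spirit of the Kolmogorov continuity theorem, using Lemma~\ref{lmmObsIneq} as the monotonicity tool that replaces the triangle inequality. Throughout, set $g(s,t)\coloneqq \log \obs{f}{s}{t}_0+\log|t-s|$. Working with $g$ suffices: once $|g(s,t)|\le B'|t-s|^\alpha$ with $|t-s|<\eps$ small, exponentiation gives $\bigl|\obs{f}{s}{t}_0-|t-s|^{-1}\bigr|\le B|t-s|^{\alpha-1}$ with $B$ depending only on $B'$ and $\alpha$.

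\textbf{Step 1: dyadic approximation of a pair.} Fix $f\in\cap_{n\ge N}\Aset_n$ and $s<t$ in $(x-\eps,x+\eps)$ with $\eps$ small enough that $2\eps<2^{-N}$ and every point below stays in $(x-\delta,x+\delta)$. Choose $n\ge N$ with $2^{-n}\le |t-s|<2^{-n+1}$. For $m\ge n$, let $s_m^\pm,t_m^\pm\in\Dyadic{m}$ be the dyadic neighbours of $s,t$, so that $s^-_m\le s\le s^+_m$ and $t^-_m\le t\le t^+_m$.

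\textbf{Step 2: squeezing via Lemma~\ref{lmmObsIneq}.} Lemma~\ref{lmmObsIneq} bounds $\obs{f}{s}{t}_0$ above and below by ratios of three observables in which exactly one endpoint has been moved outward or inward. We apply it first to the $s$-endpoint, with $s_1=s_{m}^{\pm}$ playing the role of the intermediate point and the outer points chosen from $\Dyadic{m}$, and then symmetrically to the $t$-endpoint. Writing each factor through $g$, the $\log|\cdot|$ terms assemble into the logarithm of a cross-ratio of the four real points. That cross-ratio equals $1+O\bigl(2^{-m}/(\text{distances})\bigr)$.

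The key design choice is the auxiliary points. In the ratio $\obs{f}{s}{t_2}_0\obs{f}{s_1}{t}_0/\obs{f}{s_1}{t_2}_0$ we place $t_2$ at distance of order $2^{-m/2}$ rather than $2^{-m}$; this is exactly why $\Aset_m$ allows separations up to $100\cdot 2^{-m/2}$. With this placement the errors are:
\begin{itemize}
\item the geometric cross-ratio error, of order $2^{-m}/2^{-m/2}=2^{-m/2}$;
\item the $g$-error on the two "long" dyadic observables, of order $A2^{-m\alpha/2}$.
\end{itemize}
Both are summable in $m$.

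\textbf{Step 3: the recursion and its sum.} Iterating, we pass from $(s,t)$ to dyadic pairs at levels $m=n,n+1,\dots$. At each level we get a recursion of the schematic form
\[
g(s,t)\le g(s_m,t_m)+\text{(summable error)}.
\]
As $m\to\infty$, $s_m\to s$ and $t_m\to t$, and $g$ is continuous (because $f\in C^1$ with $f'>0$). The coarsest term $g(s_n,t_n)$ is at most $A|t-s|^\alpha$ because $s_n,t_n\in\Dyadic{n}$ are within $100\cdot 2^{-n/2}$ of each other. The remaining errors sum to something of order $\sum_{m\ge n}(2^{-m/2}+A2^{-m\alpha/2})$.

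\textbf{Main obstacle.} This naive sum gives $|t-s|^{\alpha/2}$, not $|t-s|^\alpha$. The auxiliary distances must therefore be chosen more carefully, and I expect this to be the hardest step. The first thing to try is to take the distances scale-adaptive, of size about $2^{-n}\cdot 2^{(m-n)/2}$, i.e.\ comparable to $|t-s|$ at level $n$ and growing geometrically with $m$ but capped by $100\cdot 2^{-m/2}$. Then the geometric error is $2^{-(m-n)/2}$ times the relative scale, and the $g$-errors are of order $A(2^{-n}2^{(m-n)/2})^\alpha$ weighted by the cross-ratio sensitivity, which is $O(2^{-m}/\text{distance})$. This should produce a geometric series whose sum is $\le B|t-s|^\alpha$ with $B=B(A,\delta,\alpha)$. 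The lower bound follows symmetrically from the left-hand inequality of Lemma~\ref{lmmObsIneq}. The cases where $s,t$ straddle or are very close relative to dyadic spacing are handled by the same estimate, since the chosen level $n$ ensures $s_n^\pm\ne t_n^\pm$.
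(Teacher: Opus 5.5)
There is a genuine gap, at exactly the step you flag as the main obstacle. By your own accounting the chain gives only $|t-s|^{\alpha/2}$, and the proposed remedy cannot work.

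The right-hand inequality of Lemma~\ref{lmmObsIneq} gives
\begin{equation*}
g(s,t)<g(s_1,t)+g(s,t_2)-g(s_1,t_2)+\log\frac{|t-s|\,|t_2-s_1|}{|t-s_1|\,|t_2-s|}.
\end{equation*}
Here the auxiliary terms enter with coefficient $\pm1$. The events $\Aset_k$ bound each of them by $A|\cdot|^{\alpha}$ but give no control on their difference. Only the deterministic logarithm is small like $2^{-m}/\text{distance}$; there is no such weighting of the $g$-errors. Enlarging the auxiliary distances, as in your scale-adaptive choice, therefore only increases their cost $A\,d^{\alpha}$.

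Moreover, with $t_2$ beyond $t$ (as in the ratio you single out), the pairs $(s_m,t_2)$ and $(s_{m+1},t_2)$ have length at least about $|t-s|\approx 2^{-n}$, not $2^{-m/2}$. So each level may cost $2A|t-s|^{\alpha}$, which does not decay in $m$. For $m\ge 2n+14$ such pairs are not covered by any $\Aset_k$ at all: coverage requires $2^{-n}\le 100\cdot 2^{-k/2}$, i.e.\ $k\le 2n+13$, while $s_m$ lies in general only in $\Dyadic{m}$.

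A fine-scale chain must instead place the auxiliary point next to the moving endpoint, on the inner side. Use the four-point inequality $\obs{f}{p_1}{p_3}_0\,\obs{f}{p_2}{p_4}_0<\obs{f}{p_1}{p_4}_0\,\obs{f}{p_2}{p_3}_0$ for $p_1<p_2<p_3<p_4$; both halves of Lemma~\ref{lmmObsIneq} are instances of it. Take $\{p_1,p_2\}$ to be two successive dyadic approximants of the moving endpoint and $p_3$ at distance about $2^{-m/2}$ from $p_2$, and argue symmetrically for the right endpoint. This bounds the finer pair from above by the outward approximant and from below by the inward one, with short auxiliary pairs.

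What is really missing is the correct use of the window $100\cdot 2^{-k/2}$ in $\Aset_k$: a pair at distance $\approx 2^{-n}$ must be read at dyadic level $2n$. Start your chain at level $2n$ instead of $n$. Then the coarsest pair $(s_{2n},t_{2n})$ is controlled by $\Aset_{2n}$, since its length is $<100\cdot 2^{-n}$. For $m\ge 2n$ the auxiliary distance $2^{-m/2}\le 2^{-n}$ fits between the endpoints. The total cost is
\begin{equation*}
\sum_{m\ge 2n}\bigl(A2^{-m\alpha/2}+2^{-m/2}\bigr)\le C(A,\alpha)\,|t-s|^{\alpha}.
\end{equation*}
With the configuration above and continuity for non-dyadic endpoints, this repairs your chaining and gives a route genuinely different from the paper's.

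The paper uses the same level-$2n$ observation in a reverse induction. For $s\in\Dyadic{n}$, $t\in\Dyadic{m}$ it shrinks the pair towards $t$, replacing $s$ by $s_1\in\Dyadic{n+1}$ with $|t-s_1|<2^{-n}$. It takes the auxiliary points $t_1,t_2\in\Dyadic{2n}$ within $2^{-2n}$ of $t$. The auxiliary pairs are then level-$2n$ pairs of length $<100\cdot 2^{-n}$, costing $C2^{-\alpha n}$. When $|t-s|>5\cdot 2^{-n}$ this is absorbed by $M_1\bigl(|t-s|^{\alpha}-|t-s_1|^{\alpha}\bigr)\ge M_1(5^{\alpha}-1)2^{-\alpha n}$. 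Two fine endpoints are then handled by the exact four-point identity of Lemma~\ref{lmmObsAlgEquality}, with $r_1,r_2\in\Dyadic{n}$ inserted between them.
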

\begin{proof}

In this proof $C_1, C_2, \ldots$ denote constants that depend only on $A$, $\delta$, and $\alpha$ (in particular, they do not depend on $f$).

\textbf{Step 1.} First, we show that there exists a large $M_1 = M_1(A, \delta, \alpha)>0$ and a small $\eps_1 = \eps_1 (A, \delta, \alpha) > 0$ (both to be specified after $C_1, C_2, C_3$) such that the following holds:
for any $f \in \cap_{k \geq N} \, \Aset_k$, any $m\geq n\geq N$, any $s\in \Dyadic{n} \cap (x-\eps_1, x+\eps_1)$ and any $t \in \Dyadic{m} \cap (x-\eps_1, x+\eps_1)$ with $|t-s|\leq 10 \cdot 2^{-n}$, we have
\begin{equation}\label{eqLogObsApproxInduction}
 \left|\log \left[\obs{f}{s}{t}_{0}\right] + \log|t-s|\right|
 <   M_1  \, |t-s|^{\alpha}.
\end{equation}

We prove this by reverse induction on $n$. We also assume that $t\notin\Dyadic{m-1}$.

\textit{Base Case:} $n=m$.
This follows from the fact that $f\in \Aset_m$.

\textit{Induction Step:} $n+1 \mapsto n$.

If $|t-s|\leq 10\cdot 2^{-n-1}$, then we just apply the induction hypothesis, since $t\in \Dyadic{n} \subset \Dyadic{n+1}$. 
Therefore, now we can assume $|t-s|> 10\cdot 2^{-n-1}$.

Let $s_1 \in \Dyadic{n+1}$ be the second point between $t$ and $s$, counting in the direction from $t$ to $s$.
In other words, $s_1$ lies between $t$ and $s$, with $2^{-n-1} \leq |s_1-t| < 2\cdot 2^{-n-1} $.
Notice that, since $s, t \in (x-\eps_1, x+ \eps_1)$, we also automatically have that $s_1\in (x-\eps_1, x+ \eps_1)$, and so we can apply the induction hypothesis to $s_1 \in \Dyadic{n+1} \cap  (x-\eps_1, x+ \eps_1)$ and $t\in \Dyadic{m} \cap (x-\eps_1, x+\eps_1)$.

If $2n< m$, we let $t_1, t_2\in \Dyadic{2n}$ be two points such that $t$ lies on the dyadic interval of size $2^{-2n}$ between them. 
Moreover, we assume that $t_1$ lies between $s_1$ and $t$. 
In other words, $s, s_1, t_1, t, t_2$ lie on $\R$ either in increasing or decreasing order.
If $2n\geq  m$, we just put $t_1 = t_2 = t$.
Now we can apply Lemma~\ref{lmmObsIneq}.

\medskip

First, we give an upper bound for $\log \left[\obs{f}{s}{t}_{0}\right]$.
From Lemma \ref{lmmObsIneq} we deduce that
\begin{equation}\label{eqLogObsBoundAboveInduction}
\log \left[\obs{f}{s}{t}_{0}\right]
\leq 
\log \left[\obs{f}{s_1}{t}_{0}\right]
+\log \left[\obs{f}{s}{t_2}_{0}\right]
- \log \left[\obs{f}{s_1}{t_2}_{0}\right].
\end{equation}

Using induction hypothesis for $s_1$ and $t$, and the fact that $|t-s_1|\leq 2\cdot 2^{-n-1}$, we get
\begin{equation}\label{eqLogObsBoundAboveInductionFirstTerm}
\log \left[\obs{f}{s_1}{t}_{0}\right]
\leq - \log|t-s_1| 
+ M_1 \, |t-s_1|^{\alpha}
.
\end{equation}
Moreover, since $f \in \Aset_{2n}$, $s, t_2\in\Dyadic{2n}$ and 
$|t_2-s|\leq |t-s|+|t_2-t_1|\leq 10\cdot 2^{-n} +2^{-2n} < 100\cdot 2^{-n}$, we have
\begin{equation}
\log \left[\obs{f}{s}{t_2}_{0}\right]
\leq
-\log|t_2-s| 
+  A\, |t_2-s|^{\alpha}.
\end{equation}
Combining this with the fact that $10 \cdot 2^{-n-1}<|t-s|<10 \cdot 2^{-n}$, and inequality $|t_2-t|\leq|t_2-t_1|\leq 2^{-2n}$ we obtain
\begin{equation}\label{eqLogObsBoundAboveInductionSecondTerm}
\log \left[\obs{f}{s}{t_2}_{0}\right]
\leq
-\log|t-s|
+ C_1 \, 2^{-\alpha n}
.
\end{equation}
Similarly, since $f \in \Aset_{2n}$, $s_1, t_2\in\Dyadic{2n}$ and 
$|t_2 - s_1|\leq |t - s_1| + |t_1-t_2| \leq 2\cdot 2^{-n-1} + 2^{-2n} < 100\cdot 2^{-n}$, we get
\begin{equation}
\log \left[\obs{f}{s_1}{t_2}_{0}\right] 
\geq 
-\log|t_2-s_1|
- A\, |t_2-s_1|^{\alpha}.
\end{equation}
Combining this with the inequalities $2^{-n-2} \leq |t-s_1|\leq 2^{-n-1}$ and $|t_2-t|\leq |t_2-t_1| \leq 2^{-2n}$, we obtain
\begin{equation}\label{eqLogObsBoundAboveInductionThirdTerm}
\log \left[\obs{f}{s_1}{t_2}_{0}\right] 
\geq
-\log|t-s_1|
- C_2 \, 2^{-\alpha n}.
\end{equation}

Using \eqref{eqLogObsBoundAboveInduction}, \eqref{eqLogObsBoundAboveInductionFirstTerm}, \eqref{eqLogObsBoundAboveInductionSecondTerm}, and \eqref{eqLogObsBoundAboveInductionThirdTerm}, we get
\begin{equation}\label{eqLogObsBoundAboveInductionLastStep}
\log \left[\obs{f}{s}{t}_{0}\right]
\leq 
-\log|t-s|  
+ M_1 \, |t-s_1|^{\alpha}
+ C_3 \, 2^{-\alpha n}
.
\end{equation}
Since $|t-s|>10 \cdot 2^{-n-1}$, and $|t-s_1|\leq 2\cdot 2^{-n-1}$, we deduce that for $M_1$ sufficiently large (depending on $C_3$ and $\alpha$)
\begin{equation}
M_1 \, |t-s_1|^{\alpha}
+ C_3 \, 2^{-\alpha n}
\leq
 M_1 \, |t-s|^{\alpha},
\end{equation}
and so
\begin{equation}
\log \left[\obs{f}{s}{t}_{0}\right]
\leq 
-\log|t-s|
+ M_1 \, |t-s|^{\alpha}.
\end{equation}

\medskip

Second, analogously to the upper bound for $\log \left[\obs{f}{s}{t}_{0}\right]$, we can bound it from below using Lemma~\ref{lmmObsIneq} by
\begin{equation}
\log \left[\obs{f}{s}{t}_{0}\right] 
\geq 
-\log|t-s| 
- M_1\, |t-s|^{\alpha}.
\end{equation}
This finishes the proof of the induction step. Thus, we have proved \eqref{eqLogObsApproxInduction}.

\medskip

\textbf{Step 2.} Now, we show that there exists a large $M_2=M_2(A, \delta, \alpha)>0$ and a small $\eps_2=\eps_2(N, A, \delta, \alpha)>0$ (both to be specified after $M_1$, $\eps_1$, and $C_1, \ldots, C_9$) such that the following holds:
for any $f \in \cap_{k \geq N} \Aset_k$ and any $s, t \in \cap_{k \geq N}  \Dyadic{k}\cup (x-\eps_2, x+\eps_2) $, we have
\begin{equation}\label{eqLogObsApproxUnifStep2Goal}
 \left|\obs{f}{s}{t}_{0}-\frac{1}{|t-s|}\right|
 < M_2 \, |t-s|^{\alpha-1},
\end{equation}
which implies the desired result with $B=M_2$ and $\eps = \eps_2$.

Let $n\geq N$ be such that $2^{-n+2} < |t-s|\leq 2^{-n+3}$.
Also, let $m\geq n$ be such that $s, t\in \Dyadic{m}$. 
Since $ |t-s|> 4\cdot 2^{-n}$, there exist $r_1, r_2\in \Dyadic{n}$ between $s$ and $t$ such that $|r_2-r_1| = 2^{-n}$, $ |r_1-s| \geq 2^{-n}$, $|t-r_2|\geq 2^{-n}$,
and such that points $s, r_1, r_2, t$ lie on $\R$ either in decreasing or in increasing order.
Applying Lemma~\ref{lmmObsAlgEquality}, we get
\begin{equation}\label{eqObsApproxEquality}
\frac{1}{\obs{f}{s}{t}_{0}} = \obs{f}{r_1}{r_2}_{0}\Biggl(\frac{1}{\obs{f}{s}{r_2}_{0}\obs{f}{r_1}{t}_{0}}
-\frac{1}{\obs{f}{s}{r_1}_{0}\obs{f}{r_2}{t}_{0}}\Biggl)
\end{equation}
Since $|t-s|\leq 8\cdot 2^{-n} < 10\cdot 2^{-n} $, we can apply \eqref{eqLogObsApproxInduction} to estimate all terms in the right-hand side of equation above to get (with $C_4, C_5$ which may depend on $M_1$),
\begin{equation}\label{eqObsApproxEqualityFirstTermBound}
\left|\frac{\obs{f}{r_1}{r_2}_{0}}{\obs{f}{s}{r_2}_{0}\obs{f}{r_1}{t}_{0}}
-\frac{|r_2-s| \cdot |t-r_1|}{|r_2-r_1|}\right|\\
\leq C_4 \frac{|r_2-s| \cdot |t-r_1|}{|r_2-r_1|} \, 2^{-\alpha n}
\leq C_5\, 2^{-(\alpha+1)n}.
\end{equation}
Similarly,
\begin{equation}\label{eqObsApproxEqualitySecondTermBound}
\left|\frac{\obs{f}{r_1}{r_2}_{0}}{\obs{f}{s}{r_1}_{0}\obs{f}{r_2}{t}_{0}}
-\frac{|r_1-s|\cdot |t-r_2|}{|r_2-r_1|}\right|
\leq C_6 \, \frac{|r_1-s|\cdot |t-r_2|}{|r_2-r_1|} \, 2^{-\alpha n}
\leq C_7 \, 2^{-(\alpha+1)n}.
\end{equation}

Notice that
\begin{equation}
\frac{|r_2-s|\cdot|t-r_1|}{|r_2-r_1|} 
- \frac{|r_1-s| \cdot |t-r_2|}{|r_2-r_1|}
=
|t-s|.
\end{equation}
Therefore, combining this with  \eqref{eqObsApproxEquality}, \eqref{eqObsApproxEqualityFirstTermBound}, and \eqref{eqObsApproxEqualitySecondTermBound}, we obtain
\begin{equation}
\left|\frac{1}{\obs{f}{s}{t}_{0}}-|t-s|\right| 
\leq
C_8 \, 2^{-(\alpha+1)n} 
\leq C_9\, |t-s|^{\alpha+1}.
\end{equation}
Thus, for large enough $M_2$,
\begin{equation}
\left|\obs{f}{s}{t}_{0}-\frac{1}{|t-s|}\right| 
\leq M_2 \, |t-s|^{\alpha-1}.
\end{equation}
\end{proof}

\subsection{Tightness proof}
The following proposition, together with Corollary~\ref{crlCpmctFromObs}, implies tightness of measures $\mathcal{Z}_{\sigma^2}^{-1} \cdot \d \EmbTan{\beta}_{\sharp} \FMeas{\sigma^2}$ in the setting of Theorem~\ref{thrLocalLimit}.

To formulate the following proposition, we use the inner measure $\left(\EmbTan{\beta}_{\sharp} \FMeas{\sigma^2}\right)_{*}$, which is defined for any $\Aset\subset\CZPlus(\R)$ by
\begin{equation}
\left(\EmbTan{\beta}_{\sharp} \FMeas{\sigma^2}\right)_{*} (\Aset) 
=
\sup\left\{
\EmbTan{\beta}_{\sharp} \FMeas{\sigma^2} (S): \, S\subset \CZPlus(\R) \text{ is Borel measurable and } S\subset \Aset
\right\}.
\end{equation}
\begin{prp}\label{prpTightnessLoc}
Fix $\Lambda>0$ and $\alpha\in (0, 1/2)$.
Then for any $\eps>0$ there exist positive $\{A_x\}_{x\in\R}, \{\delta_x\}_{x\in\R}$ such that for any $\sigma, \beta>1$ with $\sigma^2/\beta<\Lambda$ we have
\begin{equation}
\mathcal{Z}_{\sigma^2}^{-1} \cdot  
\left(\EmbTan{\beta}_{\sharp} \FMeas{\sigma^2}\right)_{*}
\Big(
\cap_{x\in \R} \ObsHolLoc{\alpha} \! \left(x, \delta_x, A_x \right) 
\Big) > 1- \eps.
\end{equation}
\end{prp}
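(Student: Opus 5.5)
Since the family $\ObsHolLoc{\alpha}(x,\delta,A)$ is monotone in $\delta$ and $A$, and each $\ObsHolLoc{\alpha}(x,\delta_x,A_x)$ with $x$ in a small neighbourhood of a fixed point is implied by a single condition at that point with a smaller radius, I first reduce to countably many centres. Cover $\R$ by the intervals $(x_k-\tfrac12, x_k+\tfrac12)$ with $x_k = k/4$, $k\in\Z$. For each $k$ I will find $\delta_k$ and $A_k$ such that the exceptional probability at centre $x_k$ is at most $\eps 2^{-|k|-2}$. I then set $\delta_x=\delta_k/2$ and $A_x=A_k$ for $x$ close to $x_k$, where $|x-x_k|<\delta_k/2$; the uncovered $x$ are handled by refining the grid near $x_k$ through the same construction. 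Because the observables $\obs{f}{s}{t}_0$ are continuous functions of $f$, the set $\cap_n \Aset_n$ of Lemma~\ref{lmmObsInterpolationDyadic} is Borel. Lemma~\ref{lmmObsInterpolationDyadic} gives $\ObsHolLoc{\alpha}(x_k,\eps_k,B_k)\supset\cap_{n\ge N_k}\Aset_n$, so the inner measure of the intersection is bounded below by the measure of $\cap_k\cap_{n\ge N_k}\Aset_n^{(k)}$. It therefore suffices to show that $\sum_{n\ge N_k}\mathcal{Z}_{\sigma^2}^{-1}\,\EmbTan{\beta}_\sharp\FMeas{\sigma^2}(\Aset_n^c)\le \eps2^{-|k|-2}$, uniformly in $\sigma,\beta>1$ with $\sigma^2/\beta<\Lambda$.

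To bound $\Aset_n^c$, I take $\delta=1/2$, so that $\Dyadic{n}$ contains about $2^n$ points. The pairs $s,t\in\Dyadic{n}$ with $|t-s|\le 100\cdot 2^{-n/2}$ then number $O(2^{3n/2})$. For each pair I need
\[
\big|\log \obs{f}{s}{t}_0+\log|t-s|\big|<A|t-s|^\alpha .
\]
This condition is implied, for small $|t-s|$, by $\big|\obs{f}{s}{t}_0-|t-s|^{-1}\big|\le \tfrac{A}{2}|t-s|^{\alpha-1}$. I choose $M=\tfrac{A}{2}|t-s|^{\alpha-1/2}$ in Corollary~\ref{crlObsTailBoundLine}, which is legitimate because, for $\beta$ large relative to $|x_k|$, we have $|s|,|t|<\beta/3$ and there $\EmbTan{\beta}$ coincides with the tan map. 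The corollary then gives the bound
\[
C\,\mathcal{Z}\big((1-|t-s|/\beta)\sigma^2\big)\exp\Big(\tfrac{\beta}{\sigma^2}\big(C-C^{-1}\tfrac A2|t-s|^{\alpha-1/2}\big)\Big).
\]
Since $\alpha<1/2$ and $|t-s|\ge 2^{-n}$, we have $|t-s|^{\alpha-1/2}\ge (100)^{\alpha-1/2}2^{n(1/2-\alpha)/2}$, so the exponent decays like $\exp(-c\,\tfrac{\beta}{\sigma^2}A\,2^{cn})$. Using $\beta/\sigma^2>\Lambda^{-1}$, this super-exponential decay beats the $2^{3n/2}$ union bound, and the sum over $n\ge N_k$ can be made as small as desired by taking $N_k$ large. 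There is also a ratio $\mathcal{Z}((1-u/\beta)\sigma^2)/\mathcal{Z}(\sigma^2)$ to control. Because $\mathcal{Z}$ is decreasing, this ratio is at most $\mathcal{Z}(\sigma^2/2)/\mathcal{Z}(\sigma^2)$, which is bounded by an absolute constant for $\sigma>1$ by the explicit formula~\eqref{eqPartF_intro}.

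The main obstacle is the finitely many $(\sigma,\beta)$ that are not large, where $|x_k|$ may exceed $\beta/3$. The finite-$n$ behaviour near the scale $\beta$ is a second concern, though I expect it to be harmless. Beyond $\beta/3$ the function $\EmbTan{\beta}_\phi$ is linear, so $\log f'$ is constant there and the Hölder condition holds trivially; only points near $\pm\beta/3$ need care. I would handle these by applying the tail bound with $s,t$ on the circle directly: for any $s,t\in[-\beta/3,\beta/3]$ we have $|s/\beta-t/\beta|\le 2/3$, and the identity \eqref{eqObsToObs_0} still holds. At the junction points I would use Theorem~\ref{thrLocObsHolderLine} to pass from Hölder bounds on $\log f'$ on each side to the observable condition across the junction. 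Finally, for the regime where $x_k$ is fixed and $\beta$ is bounded, I would use the fact that for each fixed $(\sigma,\beta)$ the measure is tight on its own (it is a Borel probability measure on a Polish space). Combining this with a compactness argument in $(\sigma,\beta)$, or simply enlarging $A_k$, should yield uniformity.
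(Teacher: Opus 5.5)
Your core estimate matches the paper's. You take a union bound over the $O(2^{3n/2})$ dyadic pairs via Corollary~\ref{crlObsTailBoundLine} with $M\asymp|t-s|^{\alpha-1/2}$. This is uniform in $(\sigma,\beta)$ because $\beta/\sigma^2>\Lambda^{-1}$ (once $C^{-1}M>C$) and $\mathcal{Z}\big((1-|t-s|/\beta)\sigma^2\big)/\mathcal{Z}(\sigma^2)$ is bounded. You then apply Lemma~\ref{lmmObsInterpolationDyadic}. (The lower bound on $|t-s|^{\alpha-1/2}$ comes from $|t-s|\le 100\cdot 2^{-n/2}$, not from $|t-s|\ge 2^{-n}$.)

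Your grid refinement is unnecessary and, as described, can regress, since each new centre brings a new $N$ and possibly a smaller radius. Instead, note that the dyadic event for the window of radius $1/2$ around $x_k$ is contained in the dyadic event for the window of radius $1/4$ around any $x$ with $|x-x_k|\le 1/4$. Lemma~\ref{lmmObsInterpolationDyadic} then gives one pair $(\eps,B)$ for all such $x$ at once. This is how the paper applies it to a whole interval of centres.

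The genuine gap is the regime you yourself call the main obstacle, where $\pm\beta/3$ lies within $O(1)$ of the window. This is not a finite set of $(\sigma,\beta)$ but a bounded continuum, and it needs constants independent of $(\sigma,\beta)$.

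Your fallback via individual tightness and ``compactness in $(\sigma,\beta)$'' is not an argument as it stands. Tightness of one measure does not put mass on $\cap_x\ObsHolLoc{\alpha}(x,\delta_x,A_x)$ for any prescribed constants, and you establish no continuity of the push-forwards in the parameters. It is also unnecessary: Corollary~\ref{crlObsTailBoundLine} is already uniform in $(\sigma,\beta)$, and only the geometry at the junction causes trouble.

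Your other route (constancy of $\log f'$ beyond $\pm\beta/3$ plus Theorem~\ref{thrLocObsHolderLine}) is the paper's Case~2, but as written it fails at the junction. You apply the tail bound only for $s,t\in[-\beta/3,\beta/3]$. Lemma~\ref{lmmObsInterpolationDyadic} then returns the observable condition only on a shrunken window whose radius depends on $N$. So you control $\log f'$ only on $[-\beta/3+c,\beta/3-c]$ and have nothing on the strips next to $\pm\beta/3$. For centres in those strips, the observable condition for $\EmbTan{\beta}_\phi$ involves pairs straddling $\pm\beta/3$, to which Corollary~\ref{crlObsTailBoundLine} does not apply.

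The missing device is to run the dyadic union bound for the $\tan$ map on a strictly larger window. The paper uses $\widehat{\Emb{\beta}}$, which agrees with $\tfrac{\beta}{\pi}\tan\big(\tfrac{\pi}{\beta}\Emb{\beta}_\phi\big)$ on $[-2\beta/5,2\beta/5]$. After shrinking, the observable condition still holds at every centre in $[-\beta/3,\beta/3]$, so Theorem~\ref{thrLocObsHolderLine} yields a H\"{o}lder bound for $\log f'$ up to $\pm\beta/3$. That bound transfers to $\EmbTan{\beta}_\phi$, extends by constancy, and is converted back. Combined with the split $\beta\ge 3\eta^{-1}+\delta_\eta$ versus $\beta<3\eta^{-1}+\delta_\eta$, this gives constants depending only on $(\eta,\Lambda,\alpha)$.
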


\begin{proof}
It is sufficient to show that for any $\eta\in (0,1)$
there exist $B_{\eta}=B(\eta, \Lambda, \alpha)>0$ and $\delta_{\eta} = \delta(\eta, \Lambda, \alpha)>0$ such that
\begin{equation}\label{eqPrpTightProof1}
\mathcal{Z}_{\sigma^2}^{-1} \cdot  
\left(\EmbTan{\beta}_{\sharp} \FMeas{\sigma^2}\right)_{*}
\Big(
\cap_{x\in (-\eta^{-1}, \eta^{-1})} \ObsHolLoc{\alpha} \! \left(x, \delta_{\eta}, B_\eta \right) 
\Big) > 1- \eta
\end{equation}
for any $\sigma$ and $\beta$ as in the statement of the Proposition.

Indeed, if \eqref{eqPrpTightProof1} holds, then taking $\eta = 2^{-m}$ and summing the inequality above (and taking the intersection of the events on the left-hand side) over $m>M$ for some large $M>0$, we get the desired result.

Now we prove \eqref{eqPrpTightProof1}.
Essentially, there are two cases.
If $\beta/3$ is larger than $\eta^{-1}$, then for any $\phi$ we have that $\EmbTan{\beta}(\phi)$ agrees with $\tfrac{\beta}{\pi} \tan\left(\tfrac{\pi}{\beta} \, \Emb{\beta}(\phi) \right)$ on $[-\eta^{-1}, \eta^{-1}]$ and \eqref{eqPrpTightProof1} follows from Lemma~\ref{lmmObsInterpolationDyadic} and Corollary~\ref{crlObsTailBoundLine}.
If $\beta/3$ is smaller than $\eta^{-1}$, then the combination of Lemma~\ref{lmmObsInterpolationDyadic} and Corollary~\ref{crlObsTailBoundLine} yields the Local H\"{o}lder Condition for Observables on $[-\beta/3, \beta/3]$ (recall that  $\EmbTan{\beta}(\phi)$ changes the definition at $\{-\beta/3, \beta/3\}$).
After that we have to extend this Local H\"{o}lder Condition for Observables to $[-\eta^{-1}, \eta^{-1}]$ by using that this condition is equivalent to local H\"{o}lder condition for the logarithm of the derivative (Theorem~\ref{thrLocObsHolderLine}).
Since  $\EmbTan{\beta}(\phi)$ is always linear outside $[-\beta/3, \beta/3]$, the local H\"{o}lder condition for the logarithm of the derivative immediately extends from $[-\beta/3, \beta/3]$ to $[-\eta^{-1}, \eta^{-1}]$.
Then, using Theorem~\ref{thrLocObsHolderLine} again, we convert  the local H\"{o}lder condition for the logarithm of the derivative back to the Local H\"{o}lder Condition for Observables.

For technical reasons we need to consider a new embedding $\widehat{\Emb{\beta}}:\, \Diff^1(\T)/\SL(2, \R) \to \CZPlus(\R) $ given by
\begin{equation}
\phi \mapsto \widehat{\Emb{\beta}}(\phi)(t) \coloneqq \widehat{\Emb{\beta}_\phi} (t)
\coloneqq 
\begin{cases}
\tfrac{\beta}{\pi} \tan\left(\tfrac{\pi}{\beta} \,
\Emb{\beta}_\phi (t)
\right) 
=
\tfrac{\beta}{\pi} \tan\left(\pi \,
\GProj_\phi\big(\tfrac{t}{\beta}\big)
\right)
, &
\qquad \text{if } t\in [-2\beta/5,\, 2\beta/5];\\
c_1 \, t + d_1 , &
\qquad \text{if } t\in (-\infty, \, -2\beta/5); \\
c_2 \, t + d_2 , &
\qquad \text{if } t\in (2\beta/5, \, \infty),
\end{cases}
\end{equation}
where $c_1, d_1, c_2, d_2$ are chosen so that $\widehat{\Emb{\beta}}$ is $C^1$ at $-2\beta/5$ and $2\beta/5$.
The point of $\widehat{\Emb{\beta}}$ is that it coincides with $\tfrac{\beta}{\pi} \tan\left(\tfrac{\pi}{\beta} \,
\Emb{\beta}(\phi)\right) $ on a slightly larger interval than $\EmbTan{\beta}$ does, since $2\beta/5>\beta/3$.

Now we fix some $\beta>1$.
For a positive integer $n$ denote 
\begin{equation}
\Dyadic{n} = \left\{k/2^n \, : \, k\in \Z \right\} \cap \big(-\min\{2\beta/5,\, 6\eta^{-1}/5\},\, \min\{2\beta/5,\, 6\eta^{-1}/5\}\big).
\end{equation}
Consider the event
\begin{multline}
\Aset_n = \Biggl\{f \in \CZPlus(\R) \, :\, \Big| \, \log \left[\obs{f}{s}{t}_{0}\right]+\log |t-s| \, \Big| < |t-s|^{\alpha}, \\
\text{for all } s,t\in \Dyadic{n} \text{ such that } |t-s| \leq 100 \cdot 2^{-n/2} 
\Biggl\}.
\end{multline}

From Corollary~\ref{crlObsTailBoundLine} and the explicit formula for the partition function $\mathcal{Z}$ (see Proposition~\ref{prpMainPartFunct}) we have that for some $C=C(\eta, \Lambda, \alpha)>0$,
\begin{equation}
\mathcal{Z}_{\sigma^2}^{-1} \cdot 
\widehat{\Emb{\beta}_\sharp}\FMeas{\sigma^2}
 \left(\Aset_n\right)
\geq 
1-
C\, \eta^{-1} \, 2^{3n/2} \exp\left(- C^{-1} \, 2^{n(1-2\alpha)/10} \right).
\end{equation}
Thus, there exists $N = N(\eta, \Lambda, \alpha) > 0$ such that
\begin{equation}
\mathcal{Z}_{\sigma^2}^{-1} \cdot 
\widehat{\Emb{\beta}_\sharp}\FMeas{\sigma^2} \Big(\cap_{n=N}^{\infty} \Aset_n\Big) > 1-\eta.
\end{equation}
From Lemma~\ref{lmmObsInterpolationDyadic} we get that (this is the place where we need to use that $\widehat{\Emb{\beta}_\sharp}$ extends well slightly outside of $[-\beta/3, \beta/3]$, as we shrink the interval; here we choose to dilate the interval by a factor of $5/6$)
\begin{equation}\label{eqPrpTightProof2}
\mathcal{Z}_{\sigma^2}^{-1} \cdot  
\left( \widehat{\Emb{\beta}_\sharp} \FMeas{\sigma^2}\right)_{*}
\Big(
\cap_{x\in [-\min\{\beta/3,\, \eta^{-1}\},\, \min\{\beta/3,\, \eta^{-1}\}]} \ObsHolLoc{\alpha} \! \left(x, \delta_{\eta}, B_\eta \right) 
\Big) > 1- \eta.
\end{equation}
We now consider two cases.

\textbf{Case 1.} Suppose $\beta \geq 3\eta^{-1} + \delta_\eta$.
Then
\begin{equation}
\bigcap_{x\in [-\min\{\beta/3,\, \eta^{-1}\},\, \min\{\beta/3,\, \eta^{-1}\}]} \ObsHolLoc{\alpha} \! \left(x, \delta_{\eta}, B_\eta \right) 
\subset
\bigcap_{x\in [-\eta^{-1},\, \eta^{-1}]} \ObsHolLoc{\alpha} \! \left(x, \delta_{\eta}/10, B_\eta \right).
\end{equation}
Thus, \eqref{eqPrpTightProof1} holds (with $\delta_\eta/10$ in place of $\delta_\eta$) because for any $\phi\in \Diff^1(\T)/\SL(2, \R)$ we have that $\widehat{\Emb{\beta}}(\phi) $ agrees with $\EmbTan{\beta}(\phi)$ on $[-\beta/3,\, \beta/3]$.

\textbf{Case 2.}
Now we assume that $\beta < 3\eta^{-1} + \delta_\eta$.

From Theorem~\ref{thrLocObsHolderLine} it follows that there exist $\delta'_{\eta}>0$ and $B'_{\eta}>0$ such that
\begin{equation}\label{eqPrpTightProof3}
\bigcap_{x\in [- \min\{\beta/3,\, \eta^{-1}\},\,  \min\{\beta/3,\, \eta^{-1}\}]} 
\ObsHolLoc{\alpha} \! \left(x, \delta_{\eta}, B_\eta\right)
\subset
\mathrm{C}_{\beta, \eta},
\end{equation}
where
\begin{equation}
\mathrm{C}_{\beta, \eta}
=
\Biggl\{f \in \CZPlus(\R) \, :\quad
\begin{matrix}
\forall s\neq t \in [- \beta/3,\, \beta/3] \text{ with } |t-s|<  \delta'_{\eta}\\
\text{we have that } |\log f'(t)-\log f'(s)| <B'_{\eta}|t-s|^{\alpha}
\end{matrix}
\Biggl\}.
\end{equation}
Combining this with \eqref{eqPrpTightProof2} and \eqref{eqPrpTightProof3} we get
\begin{equation}
\mathcal{Z}_{\sigma^2}^{-1} \cdot  \widehat{\Emb{\beta}_\sharp} \FMeas{\sigma^2}\Big(\mathrm{C}_{\beta, \eta}  
\Big) > 1- \eta.
\end{equation}
Since for any $\phi\in \Diff^1(\T)/\SL(2, \R)$ we have that $\widehat{\Emb{\beta}}(\phi) $ agrees with $\EmbTan{\beta}(\phi)$ on $[-\beta/3,\, \beta/3]$,
\begin{equation}
\mathcal{Z}_{\sigma^2}^{-1} \cdot  \EmbTan{\beta}_\sharp \FMeas{\sigma^2}\Big(\mathrm{C}_{\beta, \eta}  
\Big) > 1- \eta.
\end{equation}
Moreover, since $\EmbTan{\beta}(\phi)$ is linear outside of $[-\beta/3,\, \beta/3]$, we can extend the local H\"{o}lder condition for $\log f'$ to the whole line,
\begin{multline}
\mathcal{Z}_{\sigma^2}^{-1} \cdot  \EmbTan{\beta}_\sharp \FMeas{\sigma^2}\Big(\mathrm{C}_{\beta, \eta}  
\Big) \\
=
\mathcal{Z}_{\sigma^2}^{-1} \cdot  \EmbTan{\beta}_\sharp \FMeas{\sigma^2}
\Biggl\{f \in \CZPlus(\R) \, :\quad
\begin{matrix}
\forall s\neq t \in \R \text{ with } |t-s|<  \delta'_{\eta}\\
\text{we have that } |\log f'(t)-\log f'(s)| <B'_{\eta}|t-s|^{\alpha}
\end{matrix}
\Biggl\}.
\end{multline}
From Theorem~\ref{thrLocObsHolderLine} we also get that there exist $\delta''_{\eta}>0$ and $B''_{\eta}>0$ such that
\begin{multline}
\Biggl\{f \in \CZPlus(\R) \, :\quad
\begin{matrix}
\forall s\neq t \in \R \text{ with } |t-s|<  \delta'_{\eta} \text{ we have }\\
\text{that } |\log f'(t)-\log f'(s)| <B'_{\eta}|t-s|^{\alpha}
\end{matrix}\Biggl\}\\
\subset
\bigcap_{x\in [-\eta^{-1},\, \eta^{-1}]} 
\ObsHolLoc{\alpha} \! \left(x, \delta''_{\eta}, B''_\eta\right).
\end{multline}
Therefore,
\begin{equation}
\mathcal{Z}_{\sigma^2}^{-1} \cdot  
\left(\EmbTan{\beta}_{\sharp} \FMeas{\sigma^2}\right)_{*} \!
\Big(
\cap_{x\in [-\eta^{-1}, \eta^{-1}]} \ObsHolLoc{\alpha} \! \left(x, \delta''_{\eta}, B''_\eta \right) 
\Big) > 1- \eta,
\end{equation}
proving \eqref{eqPrpTightProof1}

\end{proof}

\subsection{Proof of Theorem~\ref{thrLocalLimit}}
\label{sectThrLocalProofFinal}

Since $\lim_{\sigma, \beta \to \infty} \sigma^2/\beta = \varkappa^2$, we have that measures $\mathcal{Z}^{-1}_{\sigma^2} \cdot \d\EmbTan{\beta}_{\sharp}\FMeas{\sigma^2}$ are tight by Proposition~\ref{prpTightnessLoc} and Corollary~\ref{crlCpmctFromObs}.
From correlation functions convergence, proved in Proposition~\ref{prpCorrConv}, we deduce that any limiting measure $\d\mathcal{P}$ has the same correlation functions of observables $\obs{f}{s_j}{t_j}_0$ as $\d\FMInf{\varkappa^2}$ (see Theorem~\ref{thrLineCorrelations}).
Finally, using that these correlation functions characterise the measure uniquely (see Theorem~\ref{thrUniqLineSchw}), we get that any limiting measure $\d\mathcal{P}$ coincides with $\d\FMInf{\varkappa^2}$.
Thus, we deduce that measures $\mathcal{Z}^{-1}_{\sigma^2} \cdot \d\EmbTan{\beta}_{\sharp}\FMeas{\sigma^2}$ weakly converge to $\d\FMInf{\varkappa^2}$.

\begin{proof}[Proof of Theorem~\ref{thrLocalLimit}]
To conclude the proof of the Theorem, it remains to show that measures $\mathcal{Z}^{-1}_{\sigma^2} \cdot \d\Emb{\beta}_{\sharp}\FMeas{\sigma^2}$ have the same limit as $\mathcal{Z}^{-1}_{\sigma^2} \cdot \d\EmbTan{\beta}_{\sharp}\FMeas{\sigma^2}$.
We demonstrate this by showing that for any open $U\subset C^1(\R)$,
\begin{equation}\label{eqProofLocalConv1}
\liminf_{\sigma, \beta \to\infty} 
\mathcal{Z}^{-1}_{\sigma^2} \cdot \Emb{\beta}_{\sharp}\FMeas{\sigma^2}(U)
\geq
\FMInf{\varkappa^2}(U).
\end{equation}

\medskip

\begin{lmm}
There exists a sequence of open $U_n \subset C^1(\R)$, such that $\cup_{n=1}^{\infty} U_n = U$, and for every $n$ we have that $\Clos(U_n) \subset U$ and $U_n \subset U_{n+1}$.
Here $\Clos$ is the topological closure.
\end{lmm}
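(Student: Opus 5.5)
We use only that $C^1(\R)$ with the topology $\TopR$ is a metric space, with metric $\distCR$ from \eqref{eqDefMetricC1}. The plan is to exhaust $U$ from inside by shrinking it by its distance to the complement.

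If $U = C^1(\R)$, we take $U_n = U$ for all $n$ and there is nothing to prove, since the closure of the whole space is the whole space. If $U=\emptyset$, we take $U_n=\emptyset$. Otherwise the complement $F = C^1(\R)\setminus U$ is a nonempty closed set, and we set
\begin{equation}
U_n \coloneqq \left\{ f\in C^1(\R):\, \distCR(f, F) > 1/n \right\},
\qquad
\distCR(f, F) = \inf_{g\in F}\distCR(f, g).
\end{equation}

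We then check the required properties in order.

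\textbf{Openness.} The function $f\mapsto \distCR(f,F)$ is $1$-Lipschitz, hence continuous, so $U_n$ is open.

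\textbf{Monotonicity.} Since $1/(n+1) < 1/n$, we have $U_n\subset U_{n+1}$.

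\textbf{Closure inside $U$.} By continuity of the distance function, $\Clos(U_n)\subset\{f:\,\distCR(f,F)\geq 1/n\}$. Every point of $F$ has distance $0$ to $F$, so this set is disjoint from $F$. Hence $\Clos(U_n)\subset U$.

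\textbf{Exhaustion.} Since $F$ is closed, every $f\in U$ satisfies $\distCR(f,F)>0$, so $f\in U_n$ for all $n$ large enough. Therefore $\cup_{n} U_n = U$.

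There is no real obstacle in this argument. The only point needing care is that $\distCR(f,F)>0$ for $f\notin F$, which holds precisely because $F$ is closed in the metric topology $\TopR$.
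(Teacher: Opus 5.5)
Your proof is correct, but it takes a different route from the paper.

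\textbf{The paper's argument.} It is topological. By regularity of the metric space, each $f\in U$ has an open neighbourhood $V_f$ with $\Clos(V_f)\subset U$. Second countability of $C^1(\R)$ then gives a countable subfamily $\{V_{f_k}\}_{k\geq 1}$ covering $U$. The paper sets $U_n=\cup_{k\le n}V_{f_k}$, and $\Clos(U_n)\subset U$ holds because the closure of a finite union is the union of the closures.

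\textbf{Your argument.} You construct the exhaustion explicitly through the $1$-Lipschitz function $\distCR(\cdot, F)$. This needs neither separability nor a choice of neighbourhoods, and it works verbatim in any metric space: it is the standard exhaustion of an open set by the closed shells $\{\distCR(\cdot,F)\ge 1/n\}$.

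\textbf{What each buys.} Yours is shorter, explicit and self-contained. The paper's uses only regularity together with the Lindel\"of property, so it would also apply in regular second-countable spaces where no convenient metric is at hand.

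\textbf{Small points.} Treating $F=\emptyset$ separately is appropriate, since distance to the empty set is not defined. Because $\distCR\le 1$, your $U_1$ is empty; this is harmless, as the lemma does not require the $U_n$ to be nonempty.
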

\begin{proof}[Proof of the Lemma]
Since $C^1(\R)$ is a metric space, for every $f\in  U$ there exists an open $V_f\subset C^1(\R)$, such that $f\in V_f$ and $\Clos (V_f) \subset U$.
Using the fact that $C^1(\R)$ is second-countable, we can extract a countable subsequence $\{V_{f_k}\}_{k=1}^{\infty}$, such that $\cup_{k=1}^{\infty} V_{f_k} = U$.
Now we take $U_n = \cup_{k=1}^n V_{f_k}$.
\end{proof}

Inequality \eqref{eqProofLocalConv1} will follow, if we show that for all $n$,
\begin{equation}\label{eqProofLocalConv2}
\liminf_{\sigma, \beta \to\infty}
\mathcal{Z}^{-1}_{\sigma^2} \cdot \Emb{\beta}_{\sharp}\FMeas{\sigma^2}(U)
\geq
\liminf_{\sigma, \beta \to\infty}
\mathcal{Z}^{-1}_{\sigma^2} \cdot \EmbTan{\beta}_{\sharp}\FMeas{\sigma^2}(U_n).
\end{equation}
Indeed, from \eqref{eqProofLocalConv2} and convergence of $\mathcal{Z}^{-1}_{\sigma^2} \cdot \d\EmbTan{\beta}_{\sharp}\FMeas{\sigma^2}$ to $\d\FMInf{\varkappa^2}$ we can deduce that
\begin{equation}
\liminf_{\sigma, \beta \to\infty}
\mathcal{Z}^{-1}_{\sigma^2} \cdot \Emb{\beta}_{\sharp}\FMeas{\sigma^2}(U)
\geq
\liminf_{\sigma, \beta \to\infty} 
\mathcal{Z}^{-1}_{\sigma^2} \cdot \EmbTan{\beta}_{\sharp}\FMeas{\sigma^2}(U_n)
\geq
\FMInf{\varkappa^2}(U_n),
\end{equation}
where the right-hand side converges to $\FMInf{\varkappa^2}(U)$ as $n\to\infty$ because $\cup_{n=1}^{\infty} U_n = U$.

The following Lemma is the key observation to establish \eqref{eqProofLocalConv2}. 
\begin{lmm}\label{lmmProofThrLocalConv1}
Given an open $U$ and a compact $\cmpct \subset U$ we have that for any $\beta$ large enough the following implication holds for all $\phi\in \Diff^1(\T)/\SL(2, \R)$
\begin{equation}
 \EmbTan{\beta}(\phi) \in \cmpct
 \quad
 \implies
 \quad
 \Emb{\beta}(\phi) \in U.
\end{equation}
\end{lmm}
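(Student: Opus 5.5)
The plan is to write $\Emb{\beta}_\phi$ explicitly as a function of $\EmbTan{\beta}_\phi$ on a large but fixed window. Then we use compactness of $\cmpct$ to make the difference small in $\distCR$ uniformly over $\cmpct$.

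On $[-\beta/3,\beta/3]$ we have $\EmbTan{\beta}_\phi(t)=\tfrac{\beta}{\pi}\tan\big(\tfrac{\pi}{\beta}\Emb{\beta}_\phi(t)\big)$, where $\tfrac{\pi}{\beta}\Emb{\beta}_\phi(t)=\pi\GProj_\phi(t/\beta)\in(-\pi/2,\pi/2)$ for $|t|\le\beta/3$. This holds as long as the values stay in the principal branch, which I check below. Hence on this interval
\begin{equation*}
\Emb{\beta}_\phi(t)=h_\beta\big(\EmbTan{\beta}_\phi(t)\big),\qquad h_\beta(y)\coloneqq\tfrac{\beta}{\pi}\arctan\big(\tfrac{\pi}{\beta}y\big),
\end{equation*}
and $(\Emb{\beta}_\phi)'(t)=h_\beta'\big(\EmbTan{\beta}_\phi(t)\big)\,(\EmbTan{\beta}_\phi)'(t)$ with $h_\beta'(y)=\big(1+\pi^2y^2/\beta^2\big)^{-1}$.

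The branch issue is the one subtle point. $\GProj_\phi$ is increasing with $\GProj_\phi(0)=0$, so $\GProj_\phi(t/\beta)\in(-1/3,1/3)$ for $|t|\le\beta/3$ only if $\GProj_\phi$ does not jump by more than $1/3$ there. That is not guaranteed in general, so I avoid needing it on the whole interval. On a fixed window $[-n,n]$, continuity of $\EmbTan{\beta}_\phi$ and the fact that $\tan(\pi\GProj_\phi(\cdot/\beta))$ starts at $0$ for $t=0$ force $\pi\GProj_\phi(t/\beta)$ to stay in $(-\pi/2,\pi/2)$. Indeed, crossing $\pm\pi/2$ would make $\tan$ blow up, which contradicts the finiteness of the $C^1$ function $\EmbTan{\beta}_\phi$ on $[-n,n]\subset[-\beta/3,\beta/3]$. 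So the inversion by $h_\beta$ is valid on $[-n,n]$ once $\beta>3n$.

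Next we use compactness. Since $\cmpct$ is compact in $(C^1(\R),\TopR)$ and $U$ is open, there is $r>0$ with $\{g:\distCR(g,\cmpct)<r\}\subset U$; this is the standard positive distance from a compact set to a closed complement. Choose $n$ with $2^{-n}<r/2$. Then it suffices to show that, uniformly over $f=\EmbTan{\beta}_\phi\in\cmpct$,
\begin{equation*}
\|h_\beta\circ f-f\|_{C^1[-n,n]}\to0 \quad\text{as }\beta\to\infty,
\end{equation*}
because $\Emb{\beta}_\phi=h_\beta\circ f$ on $[-n,n]$ and only the first $n$ terms of $\distCR$ matter up to $r/2$. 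Compactness gives $M\coloneqq\sup_{f\in\cmpct}\|f\|_{C^1[-n,n]}<\infty$, since $f\mapsto\|f\|_{C^1[-n,n]}$ is continuous. The elementary bounds
\begin{align*}
|h_\beta(y)-y| &\le C|y|^3/\beta^2,\\
|h_\beta'(y)-1| &\le \pi^2y^2/\beta^2
\end{align*}
then give
\begin{equation*}
\|h_\beta\circ f-f\|_{C^1[-n,n]}\le C(M^3+M^3)/\beta^2\to0.
\end{equation*}
So for $\beta$ large, $\distCR(\Emb{\beta}_\phi,f)<r$ and hence $\Emb{\beta}_\phi\in U$.

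The main obstacle is the branch argument above. Everything else is routine Taylor bounds together with compactness giving uniform $C^1$ bounds on a fixed window. One must ensure that the principal-branch identity holds on $[-n,n]$ for every $\phi$ with $\EmbTan{\beta}_\phi\in\cmpct$. Continuity of $t\mapsto\GProj_\phi(t/\beta)$ from the value $0$ at $t=0$, combined with the finite values of the tangent, settles this.
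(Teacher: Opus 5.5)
Your proof is correct and follows essentially the same route as the paper. On a fixed window you write $\Emb{\beta}_\phi=\tfrac{\beta}{\pi}\arctan\big(\tfrac{\pi}{\beta}\,\EmbTan{\beta}_\phi\big)$, use compactness of $\cmpct$ for a uniform $C^1$ bound there, apply elementary $\arctan$ estimates, and compare with the positive distance from $\cmpct$ to the complement of $U$. Your explicit check that $\pi\GProj_\phi(t/\beta)$ stays in $(-\pi/2,\pi/2)$, which justifies the principal-branch inversion, is a step the paper's proof uses without comment.
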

\begin{proof}[Proof of the Lemma]
For $N>1$, let
\begin{equation}
M_N = \max_{f \in \cmpct , \,  t \in [-N, N]} \Big(|f(t)| + |f'(t)|\Big),
\end{equation}
which is finite by compactness of $\cmpct$ and $[-N, N]$.
Then, using definition \eqref{eqDefFTan} of $\EmbTan{\beta}$, we get that for any $\beta > 10 (N+ M_N)$, and any $\phi\in \Diff^1(\T)/\SL(2, \R)$ such that $\EmbTan{\beta}(\phi)\in\cmpct$, we have
\begin{equation}\label{eqProofThrLocalProofLemma1}
\sup_{t\in [-N, N]} 
\left| 
\EmbTan{\beta}_{\phi}(t)-\Emb{\beta}_{\phi}(t)
\right|
\leq
\sup_{x\in [-M_N, M_N]} 
\left|
x-
\tfrac{\beta}{\pi} \arctan\left( \tfrac{\pi}{\beta} \, x\right) 
\right|,
\end{equation}
and
\begin{equation}\label{eqProofThrLocalProofLemma2}
\sup_{t\in [-N, N]} 
\left| 
\frac{\d}{\d t}\EmbTan{\beta}_{\phi}(t)-\frac{\d}{\d t}\Emb{\beta}_{\phi}(t)
\right|
\leq
\sup_{x\in [-M_N, M_N]} 
\left|
\frac{\d}{\d x}\left(
1-\tfrac{\beta}{\pi} \arctan\left( \tfrac{\pi}{\beta} \, x\right) 
\right)
\right|
\cdot M_N.
\end{equation} 
There exists $B(N)>0$ such that for any $\beta > B(N)$ we have that right-hand sides of both \eqref{eqProofThrLocalProofLemma1} and \eqref{eqProofThrLocalProofLemma2} are less than $1/N$.
In particular, we get that for any $\beta > B(N)$,
\begin{equation}
\distCR \Big(
\EmbTan{\beta}(\phi), \,
\Emb{\beta}(\phi)
\Big)
<\frac{10}{N}
\end{equation}
whenever $\EmbTan{\beta}(\phi)\in \cmpct$.
Thus, we can deduce that the assertion of the Lemma holds for any $\beta > B(N)$, if we take $N$ to be large enough (for example
\begin{equation}
N =  \frac{20}{\distCR \Big(\cmpct,\,  C^1(\R) \backslash U\Big)}
\end{equation}
will suffice).

\end{proof}

Now we prove \eqref{eqProofLocalConv2}.
By Proposition~\ref{prpTightnessLoc} and Corollary~\ref{crlCpmctFromObs} measures $\mathcal{Z}^{-1}_{\sigma^2} \cdot \d\EmbTan{\beta}_{\sharp}\FMeas{\sigma^2}$ are tight, meaning that there exists a sequence of compact $\cmpct_m\subset C^1(\R)$ such that 
\begin{equation}
\liminf_{\sigma, \beta\to\infty} \mathcal{Z}^{-1}_{\sigma^2} \cdot \EmbTan{\beta}_{\sharp}\FMeas{\sigma^2} (\cmpct_m) > 1-\frac{1}{m}.
\end{equation}

Therefore, applying Lemma~\ref{lmmProofThrLocalConv1} for $\cmpct = \Clos (U_n) \cap \cmpct_m$,
\begin{equation}
\liminf_{\sigma, \beta\to\infty}
\mathcal{Z}^{-1}_{\sigma^2} \cdot \Emb{\beta}_{\sharp}\FMeas{\sigma^2} (U )
\geq
\liminf_{\sigma, \beta\to\infty}
\mathcal{Z}^{-1}_{\sigma^2} \cdot \EmbTan{\beta}_{\sharp}\FMeas{\sigma^2} \Big(\Clos (U_n) \cap \cmpct_m\Big)
\geq
\liminf_{\sigma, \beta\to\infty}
\mathcal{Z}^{-1}_{\sigma^2} \cdot \EmbTan{\beta}_{\sharp}\FMeas{\sigma^2} (U_n)
- \frac{1}{m}.
\end{equation}
We obtain \eqref{eqProofLocalConv2} by taking $m\to \infty$.

\end{proof}

\section{Global structure proofs}
\label{sectThrGlobProof}

Denote
\begin{equation}
\cross{\phi}{t_1}{t_2}{t_3}{t_4}
=
\frac{\obs{\phi}{t_1}{t_3}\obs{\phi}{t_2}{t_4}}{\obs{\phi}{t_1}{t_2} \obs{\phi}{t_3}{t_4}}
=
\frac{\sin\Big(\pi\big[\phi(t_2)-\phi(t_1)\big]\Big) \sin\Big(\pi\big[\phi(t_4)-\phi(t_3)\big]\Big)}
{\sin\Big(\pi\big[\phi(t_3)-\phi(t_1)\big]\Big) \sin\Big(\pi\big[\phi(t_4)-\phi(t_2)\big]\Big)}.
\end{equation} 
These cross-ratios will play a key role in the proofs of Theorem~\ref{thrGlobalConvJumpBound} and Theorem~\ref{thrGlobalConvJumpPossible}.
They are convenient because they allow us to express jumps in $\phi$ via observables in a $\SL(2, \R)$ invariant manner, as stated in the following Proposition.

\begin{prp}\label{prpGlobalProofObsJump}
Let $s_1, s_2, s_3, s_4, s_5=s_1 \in \T$ be distinct points which lie on $\T$ in the clockwise order.
Then 
\begin{enumerate}
\item For any $\phi\in \Diff^1(\T)$ we have
\begin{equation}\label{eqPrpGlobalProofEpsDelta1}
\cross{\phi}{s_1}{s_2}{s_3}{s_4} \in (0, 1).
\end{equation}

\item
For any $\eps>0$ there exists $\delta>0$ such that for any $\phi\in \Diff^1(\T)$ satisfying
\begin{equation} \label{eqPrpGlobalProofEpsDelta2}
\forall i\in \{1, 2, 3, 4\}: \qquad \phi(s_{i+1}) - \phi(s_i) > \eps,
\end{equation}
we have 
\begin{equation} \label{eqPrpGlobalProofEpsDelta3}
\cross{\phi}{s_1}{s_2}{s_3}{s_4} \in (\delta, 1-\delta).
\end{equation}

\end{enumerate}

\end{prp}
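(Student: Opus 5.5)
The plan is to reduce everything to an explicit elementary identity in the four increments of $\phi$. Set $a_i = \phi(s_{i+1}) - \phi(s_i)$ for $i\in\{1,2,3,4\}$, where $\phi$ is lifted to $\R$ and the $s_i$ are read in the cyclic order of the statement. Since $\phi$ is an orientation-preserving diffeomorphism, every $a_i > 0$ and $a_1+a_2+a_3+a_4 = 1$. In the sine form of $\cross{\phi}{s_1}{s_2}{s_3}{s_4}$, the arguments are $\pi a_1$ and $\pi a_3$ in the numerator and $\pi(a_1+a_2)$ and $\pi(a_2+a_3)$ in the denominator. All four lie in $(0,\pi)$, so every sine is strictly positive and the cross-ratio is positive.

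For the upper bound in part 1 I would use the Ptolemy-type identity
\begin{equation}
\sin(x+y)\sin(y+z) = \sin x \sin z + \sin y \sin(x+y+z),
\end{equation}
which follows from product-to-sum formulas. Apply it with $x=\pi a_1$, $y=\pi a_2$, $z=\pi a_3$, and note that $x+y+z = \pi(1-a_4)\in(0,\pi)$. This gives
\begin{equation}
1-\cross{\phi}{s_1}{s_2}{s_3}{s_4} = \frac{\sin(\pi a_2)\,\sin(\pi a_4)}{\sin\big(\pi(a_1+a_2)\big)\,\sin\big(\pi(a_2+a_3)\big)},
\end{equation}
which is strictly positive. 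Hence the cross-ratio lies in $(0,1)$. Alternatively, Lemma~\ref{lmmObsAlgEquality} can be transferred via $\tan(\pi\phi)$ and \eqref{eqObsToObs_0}, but the direct trigonometric identity is cleaner and is the version I would use.

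For part 2 I would argue by compactness. Assumption \eqref{eqPrpGlobalProofEpsDelta2} states that $a_i > \eps$ for all $i$. Since the $a_i$ sum to $1$, we also have $a_i < 1-3\eps$, and each pairwise sum $a_1+a_2$ and $a_2+a_3$ lies in $(2\eps, 1-2\eps)$. The vector $(a_1,a_2,a_3)$ therefore ranges over the compact set $K_\eps=\{a_i\ge\eps,\ a_1+a_2+a_3\le 1-\eps\}$. On $K_\eps$, the cross-ratio and its complement $1-\cross{\phi}{s_1}{s_2}{s_3}{s_4}$ are continuous functions of $(a_1,a_2,a_3)$, because all the sines in the denominators are bounded below by $\sin(2\pi\eps)>0$.

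Both expressions are strictly positive on $K_\eps$, so each attains a positive minimum there. Taking $\delta$ to be half the smaller of the two minima gives \eqref{eqPrpGlobalProofEpsDelta3}. An explicit bound also works: since every sine involved is at least $\sin(\pi\eps)$, both expressions are bounded below by $\sin^2(\pi\eps)$, so $\delta=\sin^2(\pi\eps)/2$ suffices.

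The main obstacle is bookkeeping rather than analysis. One must check that the orientation convention ("clockwise" in the statement versus positive direction in the notation) matches the assignment of the increments $a_i$, and that the argument $\phi(t_4)-\phi(t_2)$, read modulo 1 with the absolute value inside $\sin$, indeed equals $a_2+a_3$. If the cyclic labelling were reversed, the same computation would apply with the $a_i$ reindexed, so I would fix the orientation at the outset.
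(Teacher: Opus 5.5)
Your proof is correct, but it takes a different route from the paper. The paper normalises $s_1=0$ and substitutes $f_j=-\cot(\eps/2+\pi\phi(s_j))$. This turns the sine cross-ratio into the real cross-ratio $\frac{(f_2-f_1)(f_4-f_3)}{(f_3-f_1)(f_4-f_2)}$ of four increasing reals, so part~1 follows at once from $f_1<f_2<f_3<f_4$. For part~2 the paper writes this ratio as $\bigl(1+\frac{f_3-f_2}{f_2-f_1}\bigr)^{-1}\bigl(1+\frac{f_3-f_2}{f_4-f_3}\bigr)^{-1}$. It then uses crude bounds: all $f_j$ lie in $(-\cot\frac{\eps}{2},\cot\frac{\eps}{2})$, and consecutive gaps are at least of order $\eps$ because $|\cot'|\ge 1$.

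You instead stay on the circle and use the Ptolemy identity. This is exactly the circle relation \eqref{eqLmmGlobalProofBBReduction1} multiplied through by $\mathcal{O}(\phi;s_1,s_3)\,\mathcal{O}(\phi;s_2,s_4)$, and it expresses the cross-ratio and its complement symmetrically as two ratios of positive sines.

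Your route has two advantages. You never have to pick a branch of $\cot$, that is, arrange all four angles increasingly inside one interval of length $\pi$. The paper does this implicitly through a rotation of the values, which is legitimate by $\SL(2,\R)$-invariance. You also get a clean explicit $\delta$ (your $\sin^2(\pi\eps)/2$), which is of order $\eps^2$, whereas the paper's range-and-gap bounds only give a $\delta$ of order $\eps^4$. The paper's route, in turn, makes part~1 a one-line monotonicity statement. It also keeps the argument in the real-line language ($\tan(\pi\phi)$, Lemma~\ref{lmmObsAlgEquality}) used elsewhere in the paper.

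Your compactness version is also fine, with the trivial remark that the hypothesis is vacuous for $\eps\ge 1/4$. One small wording point in your explicit bound: the estimate $\sin^2(\pi\eps)$ comes from two facts. The numerator sines exceed $\sin(\pi\eps)$, since each $a_i\in(\eps,1-3\eps)$ with $\eps<1/4$, and the denominator sines are at most $1$. A lower bound on the denominator sines plays no role in that estimate.
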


\begin{proof}
By rotating the argument, we can assume that $s_1=0$.
Also, let $\eps>0$ be such that \eqref{eqPrpGlobalProofEpsDelta2} holds.

Denote $f_j = -\cot\big(\frac{\eps}{2} + \pi \,\phi (s_j) \big)$.
Then it is easy to check that 
\begin{equation}
\cross{\phi}{s_1}{s_2}{s_3}{s_4}
=
\frac{(f_2-f_1)(f_4-f_3)}{(f_3-f_1)(f_4-f_2)}.
\end{equation}
It is obvious that
\begin{equation}
0 < \frac{(f_2-f_1)(f_4-f_3)}{(f_3-f_1)(f_4-f_2)} < 1,
\end{equation}
which proves~\eqref{eqPrpGlobalProofEpsDelta1}.

Now we prove~\eqref{eqPrpGlobalProofEpsDelta3}.
Note that
\begin{equation}
\frac{(f_2-f_1)(f_4-f_3)}{(f_3-f_1)(f_4-f_2)}
=
\left(
1+ \frac{(f_3-f_2)}{(f_2-f_1)}
\right)^{-1}
\times
\,\,
\left(
1+ \frac{(f_3-f_2)}{(f_4-f_3)}
\right)^{-1}.
\end{equation}
From \eqref{eqPrpGlobalProofEpsDelta2} we have
$f_1, f_2, f_3, f_4
\in
(-\cot \frac{\eps}{2}, \cot \frac{\eps}{2} )$.
Since $\forall t\in (0, 1-\eps): \, |\cot (t + \eps) - \cot t|\geq \eps$,
\begin{equation}
\left(1+\frac{2 \cot \frac{\eps}{2} }{\eps}\right)^{-2}
\leq 
\left(
1+ \frac{(f_3-f_2)}{(f_2-f_1)}
\right)^{-1}
\!\!
\times
\left(
1+ \frac{(f_3-f_2)}{(f_4-f_3)}
\right)^{-1}
\leq 
\left(1+\frac{\eps}{2 \cot \frac{\eps}{2} }\right)^{-2}.
\end{equation}
\end{proof}

Now our goal is to show that in the $\sigma\to\infty$ limit we have that cross-ratios $\cross{\phi}{s_1}{s_2}{s_3}{s_4}$ become close either to $0$ or to $1$ with high probability with respect to $\mathcal{Z}_{\sigma^2}^{-1}\cdot \d\FMeas{\sigma^2}$, which we do in Lemma~\ref{lmmGlobalLimitObs}.
There we also show that these cross-ratios become close to $0$ if $s_3$ and $s_4$ are close.

\subsection{Schwarzian Measure on the interval}
We start by relating expectations with respect to $\d\FMeas{\sigma^2}$ to expectations with respect to Brownian Bridges $\d \WS{\sigma^2}{a}{T}$.
Let
\begin{equation}
h_T(a, \sigma) 
= \int_{\R_+} \cos (a\, k) \, \exp\left( - \frac{(1-T)\sigma^2}{2}\cdot k^2 \right)
\sinh(2\pi k)\, 2k \, \d k.
\end{equation}

\begin{lmm}\label{lmmGlobalUniqFuncEqual}
Let $\Big\{\obs{f}{s_{j}}{t_j}\Big\}_{j=1}^{N}$ be a set of observables (not necessarily non-interlaced) on $[0, T]$ for some $T\in (0, 1)$.
Then we have that for any continuous bounded function $F\in C(\R^N_+)$,
\begin{multline}\label{eqLmmGlobalUniqFuncEqual}
\int_{\R} 
\left[
\int_{C[0, T]}
 F\left(\Big\{\obs{\B_{\xi}}{s_j}{t_j}_0
 \Big\}_{j=1}^{N}
 \right)
\d \WS{\sigma^2}{a}{T}(\xi) 
\right]
h_T(a, \sigma) \d a\\
=
\int_{\Diff^1(\T)/\SL(2, \R)}
 F\left(\Big\{\obs{\phi}{s_{j}}{t_j}\Big\}_{j=1}^{N}\right) \d\FMeas{\sigma^2}(\phi).
\end{multline}
\end{lmm}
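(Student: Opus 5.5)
A natural approach is to prove the identity directly from the Brownian bridge construction of $\FMeas{\sigma^2}$ by cutting the circle at $T$. Since both sides are integrals of a bounded continuous function of finitely many observables, it suffices to show that the two sides define the same measure on $\R_+^N$.

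The first step is to represent $\FMeas{\sigma^2}$ through a gauge-fixed Brownian bridge on $[0,1]$, as in \citep{BLW,LosevCorr}. Fixing the $\SL(2,\R)$ gauge suitably should give
\begin{equation*}
\int F\big(\{\obs{\phi}{s_j}{t_j}\}\big)\d\FMeas{\sigma^2}(\phi) = \int F\big(\{\obs{\B_\xi}{s_j}{t_j}_0\}\big)\, \d\nu_{\sigma^2}(\xi),
\end{equation*}
where $\nu_{\sigma^2}$ is the image measure on paths on $[0,1]$. The tool is the tangent map $\tan(\pi\phi)$, composed with a M\"obius map so that the image starts at $0$ with unit derivative. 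On $[0,T]$, with $T<1$, this map is a finite monotone $C^1$ function $g$. By Proposition~\ref{prpObsSLInvar} and the identity $\obs{\phi}{s}{t}=\obs{\tfrac{1}{\pi}\tan(\pi\phi)}{s}{t}_0$, every observable with $s_j,t_j\in[0,T]$ equals $\obs{g}{s_j}{t_j}_0$. This holds whether or not the observables are interlaced, since it is a pointwise identity.

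The second step is to write $g=\B_\xi$ with $\xi=\log g'$ on $[0,T]$, and to identify the law of $\xi|_{[0,T]}$. By the Markov property of the bridge, conditioning on the endpoint value $\xi(T)=a$ leaves $\xi|_{[0,T]}$ distributed as the unnormalised bridge $\WS{\sigma^2}{a}{T}$. What remains is to integrate over the complementary arc $[T,1]$, which contributes a weight depending only on $a$ and $T$.

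The third step, and the main obstacle, is to show that this weight is exactly $h_T(a,\sigma)$. Here I would not recompute it from scratch. Instead I would identify it through the already known formulas, since the weight is independent of $F$ and of the observables. Take $N=0$ together with non-interlaced test families. On one side, Proposition~\ref{prpIntervalExpObsFormula_corr} gives the correlations on $[0,T]$ with factor $\cos(ak_0)/\pi$ and $\tau_0$ the outer length in $[0,T]$. On the other side, Theorem~\ref{thrMainCorrelations_corr} gives the circle correlations, in which the outer domain has length $\tau_0+(1-T)$. Using
\begin{equation*}
\int_\R \frac{\cos(ak_0)}{\pi}\cos(ak)\,\d a=\delta(k_0-k)
\end{equation*}
(as a distribution, on $k,k_0\ge0$), one checks that integrating the interval formula against $h_T$ reproduces the circle formula exactly. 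Thus the weight $w(a)$ and $h_T(a,\sigma)$ have the same pairing against the family $\{e^{-\tau_0\sigma^2k_0^2/2}\cos(ak_0)\}_{\tau_0>0}$ in the $N=0$ case. Gaussian-type functions of this kind separate even functions, so $w=h_T$.

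The final step is to pass from this identification to general bounded continuous $F$ and interlaced sets. Once $w=h_T$, the identity holds pathwise under the disintegration, so it holds for arbitrary $F$ without appealing to moments. The remaining care lies in justifying Fubini. The function $h_T$ is not positive, but it is integrable in $a$ because of the Gaussian factor $e^{-a^2/(2T\sigma^2)}$ in the total mass of $\WS{\sigma^2}{a}{T}$, and the quantities $F$ are bounded.
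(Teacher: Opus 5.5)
\paragraph{The gap.} The problem is in your second and third steps. You claim that after gauge fixing, the law of $\xi=\log g'$ on $[0,T]$ disintegrates as $\int_\R \WS{\sigma^2}{a}{T}\, w(a)\,\d a$, with a complementary-arc weight $w$ that depends only on $a$, and that $w=h_T$. This cannot hold in any gauge, because $h_T$ changes sign. A Gaussian computation with $\tau=(1-T)\sigma^2$ gives
\begin{equation*}
h_T(a,\sigma)=\sqrt{2\pi}\,\tau^{-3/2}\,e^{(4\pi^2-a^2)/(2\tau)}\Big(2\pi\cos\tfrac{2\pi a}{\tau}-a\sin\tfrac{2\pi a}{\tau}\Big),
\end{equation*}
which is negative at $a=\tau/2$. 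So $\int_\R h_T(a,\sigma)\,\WS{\sigma^2}{a}{T}\,\d a$ is a signed measure on $C[0,T]$. A weight obtained by integrating a positive measure over the arc $[T,1]$ would be nonnegative, so it cannot equal $h_T$. The lemma is an identity between push-forwards onto $\SL(2,\R)$-invariant functionals, not between path laws.

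The structural reason is that $a=\xi(T)$ is not gauge invariant. The residual freedom $g\mapsto g/(1+cg)$, which keeps $g(0)=0$ and $g'(0)=1$, replaces $\xi$ by $\xi-2\log(1+cg)$. The invariant data at the cut is $\obs{g}{0}{T}_0=e^{a/2}/g(T)$, and this involves the nonlocal functional $g(T)=\int_0^T e^{\xi}$. In addition, in the actual construction several things are nonlocal:
\begin{itemize}
\item $\log g'$ differs from the underlying bridge by terms coming from $\phi=\Theta+\A_\xi$ and the M\"{o}bius normalisation;
\item the density $\exp\{\frac{2\pi^2}{\sigma^2}\int_0^1\phi'^2\}$ is nonlocal;
\item the quotient by $\SL(2,\R)$ contributes a Jacobian.
\end{itemize}
So the Markov property of the bridge does not give a plain conditional bridge given $\xi(T)=a$. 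Your separation argument therefore deduces $w=h_T$ from a false premise. Your final ``pathwise'' step, which was meant to handle general $F$ and interlaced sets, then has nothing to rest on.

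\paragraph{What survives and what is missing.} Your third-step computation is sound: integrating Proposition~\ref{prpIntervalExpObsFormula_corr} against $h_T$, using $\int_\R \frac{\cos(ak_1)}{\pi}\cos(ak_2)\,\d a$, reproduces Theorem~\ref{thrMainCorrelations_corr}. This is exactly the first step of the paper's proof, and it gives the identity for monomials of non-interlaced observables. In place of a disintegration, the paper finishes with two further ingredients.

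First, exponential moments on both sides. On the circle this is Proposition~\ref{prpExpMomentCirc_corr}. On the interval side it is Proposition~\ref{prpObsExpMomentBound_corr} together with $|h_T(a,\sigma)|\le h_T(0,\sigma)$. These upgrade the moment identity to characteristic functions, and hence to all bounded continuous $F$ of non-interlaced families.

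Second, an algebraic reduction of interlaced sets. By Lemma~\ref{lmmObsAlgEquality} and its circle analogue, every $\obs{\cdot}{x_i}{x_j}$ is the same continuous function, on both sides, of the non-interlaced family $\{\obs{\cdot}{x_j}{x_{j+1}}\}_j\cup\{\obs{\cdot}{x_1}{x_j}\}_j$.
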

\begin{proof}
 
First, we observe that \eqref{eqLmmGlobalUniqFuncEqual} holds when observables are non-interlaced and $F$ is a monomial  (i.e. for non-interlaced correlation functions).
Indeed, this follows from explicit formulae for correlation functions of $\d \WS{\sigma^2}{a}{T}$ and $\d\FMeas{\sigma^2}$, which were derived in \citep{LosevCorr} (we recall them in Proposition~\ref{prpIntervalExpObsFormula_corr} and Theorem~\ref{thrMainCorrelations_corr}),
and the fact that with our normalisation of the Fourier transform, 
\begin{equation}
\int_{\R}\left(\int_{\R_+} 
\frac{\cos(a \, k_1)}{\pi} 
G(k_1) \d k_1\right)
\cdot
\left(\int_{\R_+} 
 \cos(a \, k_2)
 H(k_2)\d k_2\right)
\d a
=
\int_{\R_+} G(k) H(k) \d k.
\end{equation}

Secondly, we notice the existence of exponential moments for both sides.
For the right-hand side this is precisely Proposition~\ref{prpExpMomentCirc_corr}.
For the left-hand side it follows from Proposition~\ref{prpObsExpMomentBound_corr} that for any $s\neq t \in \T$,
\begin{multline}
\int_{\R} 
\left[
\int_{C[0, T]}
 \exp\left(\frac{8}{\sigma^2}{\obs{\B_{\xi}}{s}{t}_0}
 \right)
\d \WS{\sigma^2}{a}{T}(\xi) 
\right]
\Big| h_T(a, \sigma) \Big| 
\d a\\
\leq 
\int_{\R} 
\frac{1}{\sqrt{2\pi T}\sigma} \,
\exp\left(\frac{2000}{(t-s)\sigma^2}-\frac{a^2}{2 T\sigma^2}\right)\,
\Big| h_T(0, \sigma) \Big| \d a,
\end{multline}
which is finite.
Combining this with \eqref{eqLmmGlobalUniqFuncEqual} for monomials, we get that \eqref{eqLmmGlobalUniqFuncEqual} holds for imaginary exponentials (i.e. Fourier transform) of non-interlaced observables. 
Thus, we can deduce that the lemma holds whenever $\Big\{\obs{\phi}{s_{j}}{t_j}\Big\}_{j=1}^{N}$ are non-interlaced. 

Finally, we are going to show that any continuous bounded function $F\in C(\R^N_+)$ of some set of observables can be rewritten as a  continuous bounded function $G$ of some non-interlaced set of observables.
Let $0\leq x_1 < x_2 < \ldots < x_M <1$ be all of the points $\{s_j\}_{j=1}^N$ and $\{t_j\}_{j=1}^N$ sorted in the increasing order.
Consider the non-interlaced set of $2M-3$ observables 
\begin{equation}
\Big\{\obs{\phi}{x_{j}}{x_{j+1}}\Big\}_{j=1}^{M-1} 
\cup
\Big\{\obs{\phi}{x_{1}}{x_{j}}\Big\}_{j=3}^{M},
\end{equation}
which we also denote by $\{\mathcal{O}(\phi;\, k)\}_{k=1}^{2M-3}$ as a shorthand.
Similarly, we denote non-interlaced observables 
\begin{equation}
\Big\{\obs{\B_{\xi}}{x_{j}}{x_{j+1}}_0\Big\}_{j=1}^{M-1} 
\cup
\Big\{\obs{\B_{\xi}}{x_{1}}{x_{j}}_0\Big\}_{j=3}^{M}
\end{equation}
by $\{\mathcal{O}_0(\B_{\xi};\, k)\}_{k=1}^{2M-3}$.
We claim that for any $1\leq i < j \leq M$ there exists a continuous function $H_{i, j}: \R_+^{2M-3} \to \R_+$ such that 
\begin{align}
\obs{\phi}{x_i}{x_j} = H_{i, j}\Big( \{\mathcal{O}(\phi;\, k)\}_{k=1}^{2M-3}\Big),
\\
\obs{\B_{\xi}}{x_i}{x_j}_0 = H_{i, j}\Big( \{\mathcal{O}_0(\B_{\xi}; \, k)\}_{k=1}^{2M-3}\Big).
\end{align}
This is easy to prove by induction on $|j-i|$ using Lemma~\ref{lmmObsAlgEquality} and the fact that the exact same equality (as in Lemma~\ref{lmmObsAlgEquality}) holds for $\obs{\phi}{\cdot}{\cdot}$ in place of $\obs{f}{\cdot}{\cdot}_0$, namely that
\begin{equation}\label{eqLmmGlobalProofBBReduction1}
\frac{1}{\obs{\phi}{\tau_1}{\tau_3} \obs{\phi}{\tau_2}{\tau_4}} = 
\frac{1}{\obs{\phi}{\tau_1}{\tau_2} \obs{\phi}{\tau_3}{\tau_4}}
+\frac{1}{\obs{\phi}{\tau_1}{\tau_4} \obs{\phi}{\tau_2}{\tau_3}}.
\end{equation}
The latter was proved in \citep[Lemma~36]{LosevLDP} (alternatively, it also follows from Lemma~\ref{lmmObsAlgEquality} after substituting $f = \tan(\pi \phi)$).
When proving the existence of $H_{i, j}$ by induction on $|j-i|$ we use Lemma~\ref{lmmObsAlgEquality} and \eqref{eqLmmGlobalProofBBReduction1} with $\tau_1 = x_1$, $\tau_2=x_i$, $\tau_3=x_{i+1}$, $\tau_4=x_{j}$.

Therefore, we deduce that there exists a bounded continuous $G\in C(\R^{2M-3}_+)$
\begin{equation}
F\left(\Big\{\obs{\phi}{s_{j}}{t_j}\Big\}_{j=1}^{N}\right)
=
G\Big( \{\mathcal{O}(\phi;\, k)\}_{k=1}^{2M-3}\Big)
\end{equation}
and
\begin{equation}
F\left(\Big\{\obs{\B_{\xi}}{s_{j}}{t_j}_0\Big\}_{j=1}^{N}\right)
=
G\Big( \{\mathcal{O}_0(\B_{\xi};\, k)\}_{k=1}^{2M-3}\Big).
\end{equation}
Since we have already proved the Lemma for non-interlaced observables, the desired result follows immediately.

\end{proof}

Denote
\begin{equation}
\cross{f}{s_1}{s_2}{s_3}{s_4}_0
=
\frac{\obs{f}{s_1}{s_3}_0 \obs{f}{s_2}{s_4}_0}
{\obs{f}{s_1}{s_2}_0 \obs{f}{s_3}{s_4}_0}
=
\frac{\left| f(s_2)-f(s_1)\right| \cdot \left|f(s_4)-f(s_3)\right|}
{\left|f(s_3)-f(s_1)\right| \cdot \left|f(s_4)-f(s_2)\right|}.
\end{equation}
Now we calculate the law of $\cross{\B_{\xi}}{s_1}{s_2}{s_3}{s_4}_0$ in the large temperature limit of the Brownian Bridges.

\begin{lmm}\label{lmmCrossBBConv}
Let $0\leq s_1 < s_2 < s_3 < s_4 \leq T$.
Then for any bounded continuous function $F\in C(\R_+)$ and any $a\in \R$, 
\begin{equation}
\lim_{\sigma\to\infty}
\left[
\frac{1}
{\WS{\sigma^2}{a\sigma}{T}\Big(C[0, T]\Big)}
\cdot
\int_{C[0, T]}
F\left(
 \cross{\B_{\xi}}{s_1}{s_2}{s_3}{s_4}_0
 \right)
\d \WS{\sigma^2}{a\sigma}{T}(\xi) 
\right]
= q \, F(1) + (1-q) \, F(0),
\end{equation}
where
\begin{equation}
q 
=
\WS{1}{a}{T}\left\{
\xi\in C[0, T]: \,
\max_{\tau\in [s_1, s_{2}]}\xi(\tau)
>
\max_{\tau\in [s_2, s_{3}]}\xi(\tau)
\, \text{ and } \,
\max_{\tau\in [s_2, s_{3}]}\xi(\tau)
<
\max_{\tau\in [s_3, s_{4}]}\xi(\tau)
\right\}.
\end{equation}

\end{lmm}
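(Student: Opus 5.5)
We rescale the Brownian bridge to unit variance. Under the normalised measure $\WS{\sigma^2}{a\sigma}{T}/\WS{\sigma^2}{a\sigma}{T}(C[0,T])$, the process $\xi$ has the law of $\sigma\eta$, where $\eta$ is distributed according to the normalised bridge $\WS{1}{a}{T}/\WS{1}{a}{T}(C[0,T])$. Thus
\begin{equation}
\cross{\B_{\xi}}{s_1}{s_2}{s_3}{s_4}_0=\frac{I_{12}\,I_{34}}{I_{13}\,I_{24}},\qquad I_{jk}\coloneqq\int_{s_j}^{s_k}e^{\sigma\eta(\tau)}\d\tau,
\end{equation}
and it suffices to find the almost sure limit of this ratio as $\sigma\to\infty$ for a fixed path $\eta$, then apply dominated convergence. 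This uses that $F$ is bounded and that the cross-ratio lies in $[0,1]$.

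The key step is Laplace asymptotics. For a continuous $\eta$, $\sigma^{-1}\log\int_{u}^{v}e^{\sigma\eta}\to \max_{[u,v]}\eta$. Write $M_1,M_2,M_3$ for the maxima of $\eta$ on $[s_1,s_2]$, $[s_2,s_3]$, $[s_3,s_4]$. Almost surely these three values are pairwise distinct, since the Brownian bridge has distinct maxima on disjoint closed intervals a.s. (the common endpoints are a null issue). With $J_i$ the integral over the $i$-th interval, we have $I_{13}=J_1+J_2$, $I_{24}=J_2+J_3$, $I_{12}=J_1$, $I_{34}=J_3$. Hence
\begin{equation}
\cross{\B_{\xi}}{s_1}{s_2}{s_3}{s_4}_0=\Big(1+\tfrac{J_2}{J_1}\Big)^{-1}\Big(1+\tfrac{J_2}{J_3}\Big)^{-1},
\end{equation}
and each ratio $J_2/J_i$ tends to $0$ or $\infty$ according to whether $M_2<M_i$ or $M_2>M_i$, since $J_2/J_i=\exp(\sigma(M_2-M_i)+o(\sigma))$. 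Therefore the cross-ratio tends to $1$ on the event $\{M_1>M_2,\ M_3>M_2\}$ and to $0$ otherwise (almost surely). Dominated convergence then gives the limit $qF(1)+(1-q)F(0)$, where $q$ is the probability of that event under the normalised unit-variance bridge.

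The main points to check carefully are the following.

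\begin{enumerate}
\item \emph{The scaling identity.} From Definition~\ref{defn:BB}, the normalised $\WS{\sigma^2}{a\sigma}{T}$ is the Brownian bridge of variance $\sigma^2$ from $0$ to $a\sigma$, which is exactly $\sigma$ times a standard bridge from $0$ to $a$.
\item \emph{The normalisation in $q$.} In the statement, $q$ should be read as the probability under the normalised $\WS{1}{a}{T}$. I would interpret it this way and remark that the prefactor cancels.
\item \emph{Distinct maxima.} I would argue that a Brownian bridge is absolutely continuous with respect to Brownian motion on any $[0,T-\epsilon]$, and that for Brownian motion the maxima over disjoint intervals are distinct almost surely. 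A tie between adjacent intervals forces the maximum to sit at the shared endpoint for both, which is a null event. If $s_4=T$, the endpoint value $a$ requires a small separate remark.
\end{enumerate}

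I expect this distinct-maxima argument to be the main (mild) obstacle; the rest is routine.
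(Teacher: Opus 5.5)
Your proposal is correct and follows essentially the same route as the paper: rescale to the unit-variance bridge via $\xi=\sigma\eta$, factor the cross-ratio as $(1+J_2/J_1)^{-1}(1+J_2/J_3)^{-1}$, use the Laplace asymptotics $\sigma^{-1}\log J_i\to M_i$ together with the almost-sure distinctness of the three maxima, and conclude by dominated convergence. Your reading of $q$ as a probability under the normalised bridge $\WS{1}{a}{T}/\WS{1}{a}{T}(C[0,T])$ is the right one, since the statement as written omits this normalisation. Your justification of distinct maxima is also a bit more detailed than the paper's, which simply asserts it.
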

\begin{proof}
Denote
\begin{equation}
\B_{\xi}^{\sigma^2}(t) \coloneqq 
\int_0^t \exp\left\{\sigma \xi(s)
\right\}\d s,
\end{equation}
so that
\begin{multline}
\frac{1}
{\WS{\sigma^2}{a\sigma}{T}\Big(C[0, T]\Big)}
\cdot
\int_{C[0, T]}
F\left(
 \cross{\B_{\xi}}{s_1}{s_2}{s_3}{s_4}_0
 \right)
\d \WS{\sigma^2}{a\sigma}{T}(\xi) \\
=
\frac{1}
{\WS{1}{a}{T}\Big(C[0, T]\Big)}
\cdot
\int_{C[0, T]}
F\left(
 \cross{\B_{\xi}^{\sigma^2}}{s_1}{s_2}{s_3}{s_4}_0
 \right)
\d \WS{1}{a}{T}(\xi) .
\end{multline}
Observe that
\begin{equation}\label{eqLmmProofCrossConv1}
\cross{\B_{\xi}^{\sigma^2}}{s_1}{s_2}{s_3}{s_4}_0
=\left(
1+ \frac{|\B_{\xi}^{\sigma^2}(s_3) - \B_{\xi}^{\sigma^2}(s_2)|}{|\B_{\xi}^{\sigma^2}(s_2) - \B_{\xi}^{\sigma^2}(s_1)|}
\right)^{-1}
\times \, \,
\left(
1+ \frac{|\B_{\xi}^{\sigma^2}(s_3) - \B_{\xi}^{\sigma^2}(s_2)|}{|\B_{\xi}^{\sigma^2}(s_4) - \B_{\xi}^{\sigma^2}(s_3)|}
\right)^{-1}
.
\end{equation}
Also, for any $s< t$ we have almost surely for $\xi\sim \d\WS{1}{a}{T}$ that 
\begin{equation}\label{eqLmmProofCrossConv2}
\lim_{\sigma\to\infty} \frac{\log \left| \B_{\xi}^{\sigma^2}(t)-\B_{\xi}^{\sigma^2}(s)\right| }{\sigma} = \max_{\tau\in [s, t]}\xi(\tau).
\end{equation}
For $j\in \{1, 2, 3\}$ we let $m_j(\xi) = \max_{\tau\in [s_j, s_{j+1}]}\xi(\tau)$.
Almost surely all $\{m_j(\xi)\}_{j=1}^3$ are distinct.
Therefore, 
\begin{equation}
\lim_{\sigma\to \infty}
\cross{\B_{\xi}^{\sigma^2}}{s_1}{s_2}{s_3}{s_4}_0
=
\begin{cases}
1, \qquad & \text{if } \, m_1(\xi) > m_2(\xi)\, \text{ and } \, m_2(\xi) < m_3(\xi) \\
0, \qquad & \text{if } \, m_1(\xi) < m_2(\xi)\, \text{ or } \, m_2(\xi) > m_3(\xi)
\end{cases}.
\end{equation}
Now the Lemma follows from the Dominated Convergence Theorem.

\end{proof}

\subsection{Proofs of Theorem~\ref{thrGlobalConvJumpBound} and Theorem~\ref{thrGlobalConvJumpPossible}}

In Proposition~\ref{prpGlobalProofObsJump} we showed that the cross-ratios $\cross{\phi}{s_1}{s_2}{s_3}{s_4}$ always take values in $(0, 1)$, and that $\phi$ having $4$ jumps translates to this observable being bounded away from both $0$ and $1$. 
The main ingredient in the proofs of Theorems~\ref{thrGlobalConvJumpBound} and~\ref{thrGlobalConvJumpPossible} is that we can now show that these cross-ratios are getting close either to $0$ or to $1$ in the $\sigma\to\infty$ limit.
We also show that these observables are close to $0$ if $s_3$ and $s_4$ are close.

\begin{lmm}\label{lmmGlobalLimitObs}
Let $s_1, s_2, s_3, s_4 \in \T$ be distinct points which lie on $\T$ in the clockwise order.
Then
\begin{equation}
\lim_{\sigma\to \infty}
\left[
\mathcal{Z}_{\sigma^2}^{-1}
\int_{\Diff^1(\T)/\SL(2, \R)} 
\cross{\phi}{s_1}{s_2}{s_3}{s_4}\left(1- \cross{\phi}{s_1}{s_2}{s_3}{s_4}\right)
\d\FMeas{\sigma^2} (\phi)\right]
=
0.
\end{equation}
Suppose now that $s_1, s_2, t$ are distinct points which lie on $\T$ in the clockwise order.
Then
\begin{equation}
\lim_{
\substack{
s_4\to t+\\
s_3\to t-
}
}
\left[
\limsup_{\sigma\to \infty}
\left[
\mathcal{Z}_{\sigma^2}^{-1}
\int_{\Diff^1(\T)/\SL(2, \R)} 
\cross{\phi}{s_1}{s_2}{s_3}{s_4}
\d\FMeas{\sigma^2} (\phi)\right]
\right]
=
0.
\end{equation}
\end{lmm}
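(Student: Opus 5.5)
By rotational invariance of $\d\FMeas{\sigma^2}$ we may assume $s_1 = 0$. Then all four points lie in $[0,T]$ for some $T<1$ (take $T = s_4$ in the first claim, and $T$ slightly beyond $t$ in the second). The plan has three steps.

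\textbf{Step 1: reduction to Brownian bridges.} The cross-ratio $\cross{\phi}{s_1}{s_2}{s_3}{s_4}$ is a ratio of four observables $\obs{\phi}{\cdot}{\cdot}$, and by Proposition~\ref{prpGlobalProofObsJump} it lies in $(0,1)$. Hence $F(x)=x(1-x)$, truncated to $[0,1]$, is a bounded continuous function of these observables. The same algebraic expression applied to $\B_\xi$ gives $\cross{\B_\xi}{s_1}{s_2}{s_3}{s_4}_0 \in (0,1)$, by \eqref{eqLmmProofCrossConv1}. Lemma~\ref{lmmGlobalUniqFuncEqual} therefore gives
\begin{equation*}
\int F\big(\widehat{\mathcal{O}}\big)\, \d\FMeas{\sigma^2}
=\int_{\R} E_{a}(\sigma)\,\WS{\sigma^2}{a}{T}\big(C[0,T]\big)\, h_T(a,\sigma)\, \d a,
\end{equation*}
where $E_a(\sigma)$ is the normalised Brownian bridge expectation of $F(\cross{\B_\xi}{s_1}{s_2}{s_3}{s_4}_0)$.

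\textbf{Step 2: rescaling and dominated convergence.} Substitute $a=\sigma b$. The total mass is $\WS{\sigma^2}{\sigma b}{T}(C[0,T]) = (2\pi T)^{-1/2}\sigma^{-1}e^{-b^2/(2T)}$. Next I need the asymptotics of $h_T(\sigma b,\sigma)$. Writing $\sinh(2\pi k)\,2k$ and completing the square in a Gaussian Fourier integral, one finds that $h_T(\sigma b,\sigma)$ is, up to the prefactor $\sigma^{-3}$ and an explicit Gaussian $\exp\{(2\pi^2 - b^2/2)/((1-T)\sigma^2)\}$-type correction, a polynomial-times-Gaussian in $b$. Dividing by $\mathcal{Z}_{\sigma^2}\sim(2\pi)^{3/2}\sigma^{-3}$, the normalised weight
\begin{equation*}
\mathcal{Z}_{\sigma^2}^{-1}\,\sigma\,\WS{\sigma^2}{\sigma b}{T}\big(C[0,T]\big)\, h_T(\sigma b,\sigma)
\end{equation*}
should converge to an explicit density $w(b)$ in $b$. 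This density is integrable and dominated uniformly in large $\sigma$ by $C(1+b^2)e^{-cb^2}$. It has total mass $1$, since $F\equiv1$ gives $1$. Since $|E_a|\le 1$, dominated convergence reduces the first claim to the pointwise limit in $b$ of $E_{\sigma b}(\sigma)$. By Lemma~\ref{lmmCrossBBConv} this limit is $qF(1)+(1-q)F(0)=0$. I expect this step to be the main obstacle. One subtlety is that $h_T$ may change sign, so I would bound $|h_T|$ by the analogous integral with $|\cos|\le1$; the Gaussian factor $e^{-b^2/(2T)}$ coming from the bridge mass then still ensures domination.

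\textbf{Step 3: the second claim.} Apply the same reduction with $F(x)=x$, truncated to $[0,1]$. This gives
\begin{equation*}
\limsup_{\sigma\to\infty}\ \mathcal{Z}_{\sigma^2}^{-1}\int \cross{\phi}{s_1}{s_2}{s_3}{s_4}\, \d\FMeas{\sigma^2}=\int q(b;s_3,s_4)\,w(b)\,\d b,
\end{equation*}
where $q(b;s_3,s_4)$ is the probability, under the variance-one bridge ending at $b$, that $m_1>m_2<m_3$, with $m_j$ as in the proof of Lemma~\ref{lmmCrossBBConv}. As $s_3\to t^-$ and $s_4\to t^+$, we have $m_2\to\max_{[s_2,t]}\xi$ and $m_3\to\xi(t)$ almost surely, by continuity of $\xi$. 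Almost surely $\xi(t)<\max_{[s_2,t]}\xi$, because $t$ is almost surely not a maximiser. Hence $q\to0$ for each $b$, and a second application of dominated convergence in $b$ (with $q\le1$ and $w$ integrable) finishes the proof.
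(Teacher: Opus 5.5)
Your proposal is correct and is essentially the paper's proof: reduce via Lemma~\ref{lmmGlobalUniqFuncEqual} with truncated $x(1-x)$ and $x$, rescale $a=\sigma b$, and dominate the weight by $C(T)e^{-b^2/(2T)}$ using $|h_T(\sigma b,\sigma)|\le\mathcal{Z}((1-T)\sigma^2)$; then finish with Lemma~\ref{lmmCrossBBConv} and dominated convergence. The only differences are that the paper never identifies the limiting weight $w(b)$, since the uniform bound is all that is needed (your $w$ is in fact signed, proportional to $(1-\frac{b^2}{1-T})e^{-b^2/(2T(1-T))}$, so ``density'' should be read loosely), and that the paper asserts $q\to 0$ without the continuity/non-maximiser argument you supply.
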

\begin{proof}
By rotating the arguments we can assume that $s_1=0$. 
Fix some $T\in (s_4, 1)$.
Denote
\begin{equation}
P_1(x) = 
\begin{cases}
x(1-x)\qquad & \text{if } x\in [0, 1];\\
0 \qquad & \text{if } x\notin [0, 1];
\end{cases}
\qquad
\text{and}
\qquad
P_2(x) = 
\begin{cases}
x\qquad & \text{if } x\in [0, 1];\\
0 \qquad & \text{if } x < 0; \\
1 \qquad & \text{if } x > 1.
\end{cases}
\end{equation}

Then from Lemma~\ref{lmmGlobalUniqFuncEqual} we have that for $j\in \{1, 2\}$,
\begin{multline}\label{eqGlobalProofPolyConv0}
\limsup_{\sigma\to \infty}
\left[
\mathcal{Z}_{\sigma^2}^{-1}
\int_{\Diff^1(\T)/\SL(2, \R)} 
P_j\Big(\cross{\phi}{s_1}{s_2}{s_3}{s_4}\Big)
\d\FMeas{\sigma^2} (\phi)
\right]
\\
=
\limsup_{\sigma\to \infty}
\int_{\R}
\mathcal{Z}_{\sigma^2}^{-1} \,
g_j(a, \sigma) \,
\WS{\sigma^2}{a\sigma}{T}\Big(C[0, T]\Big) \,
 h_T(a\sigma, \sigma) \sigma \d a
 ,
\end{multline}
where
\begin{equation}
g_j(a, \sigma)
=
\frac{1}
{\WS{\sigma^2}{a\sigma}{T}\Big(C[0, T]\Big)}
\cdot
\int_{C[0, T]}
 P_j\Big(\cross{\B_{\xi}}{s_1}{s_2}{s_3}{s_4}_0\Big)
\d \WS{\sigma^2}{a\sigma}{T}(\xi).
\end{equation}

We have that
\begin{equation}
\WS{\sigma^2}{a\sigma}{T}\Big(C[0, T]\Big)
=
\frac{1}{\sqrt{2\pi T} \, \sigma} \,
\exp\left(-\frac{a^2}{2T}\right),
\end{equation}
and from Proposition~\ref{prpMainPartFunct}, it is easy to see that
\begin{equation}
| h_T(a\sigma, \sigma)|
\leq
\mathcal{Z}\Big((1-T) \sigma^2\Big).
\end{equation}
Thus, 
we deduce that for some constant $C(T)>0$ and any $\sigma>1$,
\begin{equation}
\left|
\mathcal{Z}_{\sigma^2}^{-1} \,
\WS{\sigma^2}{a\sigma}{T}\Big(C[0, T]\Big) \,
 h_T(a\sigma, \sigma) \sigma
 \right|
 \leq 
 \left|
 \frac{1}{\sqrt{2\pi T}} \,
\exp\left(-\frac{a^2}{2T}\right)\,
\frac{\mathcal{Z}\big((1-T) \sigma^2\big)}{\mathcal{Z}(\sigma^2)}
 \right|
 \leq
 C(T)
\exp\left(-\frac{a^2}{2T}\right),
\end{equation}
where for the last inequality we used the explicit formula for the partition function $\mathcal{Z}$ (see Proposition~\ref{prpMainPartFunct}).

Therefore, 
\begin{equation}
\limsup_{\sigma\to \infty}
\left|
\int_{\R}
\mathcal{Z}_{\sigma^2}^{-1} \,
g_j(a, \sigma) \,
\WS{\sigma^2}{a\sigma}{T}\Big(C[0, T]\Big) \,
 h_T(a\sigma, \sigma) \sigma \d a
 \right|
 \leq
 C(T)
\limsup_{\sigma\to \infty}
 \int_{\R} g_j(a, \sigma) \exp\left(-\frac{a^2}{2T}\right) \d a.
\end{equation}
Now the desired result follows from Lemma~\ref{lmmCrossBBConv} and the observation that
\begin{equation}
\lim_{\substack{
s_4\to t+\\
s_3\to t-
}}
\WS{1}{a}{T}\left\{
\xi\in C[0, T]: \,
\max_{\tau\in [s_1, s_{2}]}\xi(\tau)
>
\max_{\tau\in [s_2, s_{3}]}\xi(\tau)
\, \text{ and } \,
\max_{\tau\in [s_2, s_{3}]}\xi(\tau)
<
\max_{\tau\in [s_3, s_{4}]}\xi(\tau)
\right\} = 0.
\end{equation}

\end{proof}

\begin{proof}[Proof of Theorem~\ref{thrGlobalConvJumpBound}]
Suppose that $\phi$ is such that 
\begin{equation}
\left|
\left\{
j\in\{1, 2, \ldots N \}: \,
\phi(t_{j+1}) - \phi(t_{j}) > \eps
\right\}
\right| \geq 4.
\end{equation}
Then there exist $s_1, s_{2}, s_{3}, s_{4} \in \{t_j\}_{j=1}^N$ such that $0 \leq s_1<s_2<s_3<s_4<1$ and
\begin{equation}
\forall j\in\{1, 2, 3, 4\}: \qquad \phi(s_{j+1}) - \phi(s_{j}) > \eps ,
\end{equation}
where we use the convention $s_5=s_1\in\T$.
Thus, by Proposition~\ref{prpGlobalProofObsJump}, there exists $\delta>0$ such that for any $\phi$ as above
\begin{equation}
\cross{\phi}{s_1}{s_2}{s_3}{s_4} \in (\delta, 1-\delta).
\end{equation}
Therefore, we can take
\begin{equation}
\eventGlobal_{N, \eps}
=
\bigcap_{\{s_1, s_{2}, s_{3}, s_{4}\} \subset \{t_j\}_{j=1}^N}
\left\{
[\phi] \in \Diff^1(\T)/\SL(2, \R)
:\,
\cross{\phi}{s_1}{s_2}{s_3}{s_4} \notin (\delta, 1-\delta)
\right\},
\end{equation}
where the intersection is taken over quadruples of pairwise distinct $\{s_1, s_{2}, s_{3}, s_{4}\}$.
From Lemma~\ref{lmmGlobalLimitObs} we deduce that for any such $\{s_1, s_{2}, s_{3}, s_{4}\}$,
\begin{equation}
\lim_{\sigma\to \infty }\mathcal{Z}^{-1}_{\sigma^2} \cdot \FMeas{\sigma^2} 
\left\{
[\phi] \in \Diff^1(\T)/\SL(2, \R)
:\,
\cross{\phi}{s_1}{s_2}{s_3}{s_4} \in (\delta, 1-\delta)
\right\} = 0,
\end{equation}
which finishes the proof.

\end{proof}

\begin{proof}[Proof of Theorem~\ref{thrGlobalConvJumpPossible}]

Without loss of generality we can assume that $\eps$ is so small that $\eps<10^{-3}\eta$.
In this proof we will fix the gauge.
For a conjugacy class $[\phi] \in \Diff^1(\T)/\SL(2, \R)$ we always assume that $\phi\in \Diff^1(\T)$ is normalised so that
\begin{equation}
\phi(0) = 0,
\qquad
\phi(1/3) = 1/3,
\qquad
\phi(2/3) = 2/3.
\end{equation}

We prove the Theorem by considering all $3$ cases for $k=1, 2, 3$ separately.
The most important case is $k=3$, and the other two cases will be obtained just by changing the gauge.

\medskip
\textbf{Case $k=3$.}

Consider the following events for $j\in \{0, 1, 2\}$,
\begin{align}
E_{2j+1} &= 
\left\{
[\phi] \in \Diff^1(\T)/\SL(2, \R)
:\,\cross{\phi}{\tfrac{j-2}{3}}{\tfrac{j-1}{3}}{\tfrac{j}{3}-\eta}{\tfrac{j}{3}}
< 10^{-10}
\right\};\\
E_{2j+2} &= 
\left\{
[\phi] \in \Diff^1(\T)/\SL(2, \R)
:\,\cross{\phi}{\tfrac{j-2}{3}}{\tfrac{j-1}{3}}{\tfrac{j}{3}}{\tfrac{j}{3}+\eta}
< 10^{-10}
\right\}.
\end{align}

From Lemma~\ref{lmmGlobalLimitObs} we get that there exists $\rho(\eta)>0$ such that 
\begin{align}
\liminf_{\sigma\to\infty}
\mathcal{Z}^{-1}_{\sigma^2}\cdot
\FMeas{\sigma^2} 
\left\{
\bigcap_{i\in\{1, \ldots, 6\}}E_i
\right\}
>
1-\rho(\eta)
\end{align}
and $\lim_{\eta\to 0} \rho(\eta) =0$.

Note that, because of the chosen gauge,
the following implications hold for all $j\in\{0, 1, 2\}$,
\begin{align}
[\phi] \in E_{2j+1}\qquad &\implies \qquad  \phi(\tfrac{j}{3}) - \phi(\tfrac{j}{3}-\eta)   < 10^{-5}; \\
[\phi] \in E_{2j+2}\qquad &\implies \qquad  \phi(\tfrac{j}{3}+\eta) - \phi(\tfrac{j}{3})   < 10^{-5}.
\end{align}

Now we partition $\T$ into small intervals.
Let $0=t_1<t_2<\ldots<t_N<t_{N+1} = 1$ be such that $\forall j\in\{1, \ldots, N\}:\, t_{j+1}-t_j <\eps$.
Then, by Theorem~\ref{thrGlobalConvJumpBound}, with large probability there exist no more than $3$ jumps of size $\eps/(100N)$.
In other words, there exists $E_0\subset \Diff^1(\T)/\SL(2, \R)$ such that
\begin{equation}
 E_0
\subset
\left\{
[\phi]\in \Diff^1(\T)/\SL(2, \R) : \,
\left|
\left\{
j\in\{ 1, 2, \ldots N \}: \,
\phi(t_{j+1}) - \phi(t_{j}) > \frac{\eps}{100N}
\right\}
\right| \leq 3 
\right\}
\end{equation} 
and
\begin{equation}
\lim_{\sigma\to\infty}
\mathcal{Z}^{-1}_{\sigma^2}\cdot
\FMeas{\sigma^2} 
\left(
E_0
\right) = 1.
\end{equation}
Note that because of the way we fixed the gauge,
we get that these $3$ jumps have to be of size $\approx 1/3$ and occur on $3$ intervals between $0, 1/3, 2/3$.
In other words, there exist $j_1, j_2, j_3 \in \{1, 2, \ldots N \}$ such that
\begin{equation}
t_{j_1}\in (0, 1/3),
\qquad
t_{j_2}\in (1/3, 2/3),
\qquad
t_{j_3}\in (2/3, 1),
\end{equation} 
and also
\begin{equation}\label{eqThrGlobExampleProof1}
\phi(t_{j_1+1}) - \phi(t_{j_1})  > \frac{1}{3}-\eps,
\qquad
\phi(t_{j_2+1}) - \phi(t_{j_2})  > \frac{1}{3}-\eps,
\qquad
\phi(t_{j_3+1}) - \phi(t_{j_3}) > \frac{1}{3}-\eps.
\end{equation}
Moreover, if in addition $[\phi] \in \cap_{i\in\{1, \ldots, 6\}}E_i$, then $t_{j_1}, t_{j_2}, t_{j_3}$ also will be separated from $0, 1/3, 2/3$ by at least $2\eta/3$,
\begin{equation}\label{eqThrGlobExampleProof2}
t_{j_1}\in \left(\frac{2\eta}{3}, \, \frac{1}{3}-\frac{2\eta}{3}\right),
\qquad
t_{j_2}\in \left(\frac{1}{3}+\frac{2\eta}{3}, \, \frac{2}{3}-\frac{2\eta}{3}\right),
\qquad
t_{j_3}\in \left(\frac{2}{3}+\frac{2\eta}{3}, \, 1-\frac{2\eta}{3}\right).
\end{equation} 
Therefore, taking $\evGlob_{3, \eta, \eps} = \cap_{i\in \{0, 1,\ldots, 6\}} E_i$ finishes the proof of the Theorem for $k=3$.

\medskip

\textbf{Case $k=2$.}
Consider the M\"{o}bius transformation $\psi \in \SL(2, \R)$ such that
\begin{equation}
\psi(0)=0;
\qquad
\psi(1/3) = 1/2;
\qquad
\psi(2/3) = 1-\eps.
\end{equation}
Denote $M = M(\eps)= \max_{t\in \T}\{\psi'(t), 1/\psi'(t)\}$.

Note that if $[\phi]\in \evGlob_{3, \eta, \eps/M}$, then, as shown above, there exist $t_{j_1}, t_{j_2}, t_{j_3}$ such that both \eqref{eqThrGlobExampleProof1} and \eqref{eqThrGlobExampleProof2} hold.
Then it is easy to see that 
\begin{equation}
\psi\circ \phi(t_{j_1+1}) - \psi\circ \phi(t_{j_1}) > \frac{1}{2}-\eps,
\qquad
\psi\circ\phi(t_{j_2+1}) - \psi\circ\phi(t_{j_2}) > \frac{1}{2}-\eps.
\end{equation}
Therefore, if we take $\evGlob_{2, \eta, \eps} = \evGlob_{3, \eta, \eps/M}$, then for any $[\phi]\in \evGlob_{2, \eta, \eps}$ we have that the representative $\psi\circ\phi$ satisfies the conditions desired in the statement of the Theorem.

\textbf{Case $k=1$.} 
The proof is similar to $k=2$, if we take the M\"{o}bius transformation $\psi \in \SL(2, \R)$ such that
\begin{equation}
\psi(0)=0;
\qquad
\psi(1/3) = 1-\eps;
\qquad
\psi(2/3) = 1-\frac{\eps}{2}.
\end{equation}
\end{proof}

\appendix
\section{Appendix}
\label{sect_Appendix}

\begin{stm}[\citep{LosevLDP}] \label{stmArccoshTaylor_LDP}
There exists $C>0$ such that for all $x\geq 0$ and $y\in [-\frac{1}{2}, \frac{1}{2}]$, 
\begin{equation}
\arccosh^2\left[\cosh\left(x\right)+y\right]
\geq
x^2 + 2y - C|y|(x^2+|y|),
\end{equation}
and for all $x\geq 10$ and $y\in [-\frac{1}{2}, \frac{1}{2}]$, 
\begin{equation}\label{eq_Arccosh_Taylor_Two_LDP}
\left|\arccosh^2\left[\cosh\left(x\right)+y\right]-x^2\right| 
\leq
C|y| \, e^{-x/2}.
\end{equation}
\end{stm}

\begin{proof}
Notice that for $u\geq 1$,
\begin{equation}
\frac{\d}{\d u}\arccosh^2\left[u\right]
=
2 \frac{\arccosh\left[u\right]}{\sqrt{u^2-1}},
\end{equation}
where both sides are analytic in $\D_1$.
Inequality \eqref{eq_Arccosh_Taylor_Two_LDP} follows since the right-hand side is smaller than $1/\sqrt{u}$ for large $u$.
Moreover, for some $C_1>0$,
\begin{equation}
\left|\frac{\d^2}{\d u^2}\arccosh^2\left[u\right]\right|
\leq  C_1,
\end{equation}
and
\begin{equation}
\frac{\d}{\d u}\arccosh^2\left[u\right]\Big|_{u=1} = 2.
\end{equation}
It is also easy to see that for some $C_2>0$,
\begin{equation}
\left|\frac{\d}{\d u}\arccosh^2\left[u\right] -2\right| 
\leq C_2\, \min\big\{ |u|-1, 1\big\}.
\end{equation}
Taylor expanding $\arccosh^2\left[\cosh\left(x\right)+y\right]$ in $y$ and using the fact that $\min\big\{ \cosh(x)-1, 1\big\}\leq C_3 x^2$ for some $C_3>0$ gives the desired result.
\end{proof}

\begin{prp}[\citep{LosevLDP}]\label{prpAppExpEst_LDP}
There exists $C>0$ such that for any $\sigma>0$, any $s, t\in \T$ such that $t\neq s\in \T$,
and any $\lambda\in [-C^{-1}, C^{-1}]$, we have
\begin{multline}
\int \exp\left\{\frac{2\lambda}{\sigma^2} \left(\obs{\phi}{0}{t} -\frac{1}{t}\right)\right\}\d \FMeas{\sigma^2}(\phi) \\
\leq
\frac{\sqrt{2}}{\sqrt{\pi t}\sigma}
\int_{\R_+^2}
\exp\left\{ -\frac{\alpha^2}{2t\sigma^2}
+ \frac{C_1|\lambda|(\alpha^2+ |\lambda|)}{2t\sigma^2}
\right\}
\exp\left(-\frac{(1-t)\sigma^2}{2}\cdot k_2^2\right)
\sinh(2\pi k_2)\, 2k_2 \d k_2
\d \alpha.
\end{multline}
\end{prp}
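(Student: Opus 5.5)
The plan is to reduce the circle expectation to a Brownian bridge computation on the interval $[0,t]$, and then use an explicit arccosh-type formula for the exponential moment of a single observable under the bridge.

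\textbf{Step 1: reduction to Brownian bridges.} I would apply Lemma~\ref{lmmGlobalUniqFuncEqual} with $T=t$ and the single observable $\obs{\phi}{0}{t}$. The function $F(x)=\exp\{\tfrac{2\lambda}{\sigma^2}(x-\tfrac1t)\}$ is unbounded, so I would first truncate it and then pass to the limit by monotone convergence, using Proposition~\ref{prpExpMomentCirc_corr} and Proposition~\ref{prpObsExpMomentBound_corr} for $|\lambda|$ small. This writes the left-hand side as
\begin{equation*}
\int_{\R} \left[\int \exp\Big\{\tfrac{2\lambda}{\sigma^2}\big(\obs{\B_\xi}{0}{t}_0-\tfrac1t\big)\Big\}\d\WS{\sigma^2}{a}{t}(\xi)\right] h_t(a,\sigma)\d a .
\end{equation*}
Here $h_t(a,\sigma)$ is a cosine transform in $k_2$ against $e^{-(1-t)\sigma^2k_2^2/2}\sinh(2\pi k_2)\,2k_2$. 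Keeping it under the integral and interchanging the $a$- and $k_2$-integrals produces the $k_2$ factor of the target bound.

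\textbf{Step 2: single-observable formula on the interval.} For the inner integral I would use the closed form available from \citep{LosevCorr}. Expanding Proposition~\ref{prpIntervalExpObsFormula_corr} with $N=1$ as a power series in $\mu$ and summing over $l$ gives the Laplace transform of $\obs{\B_\xi}{0}{t}_0$ under $\d\WS{\sigma^2}{a}{t}$. The summation uses the Barnes-type integral representation of $\Gamma(\tfrac l2\pm ik_0\pm ik_1)/\Gamma(l)$. The result should be a Gaussian kernel in the variable
\begin{equation*}
\arccosh^2\big[\cosh(\alpha)+y\big], \qquad y \text{ a linear function of } \lambda .
\end{equation*}
Schematically,
\begin{equation*}
\frac{1}{\sqrt{2\pi t}\,\sigma}\exp\Big\{-\frac{\arccosh^2[\cosh(\alpha)+y]}{2t\sigma^2}\Big\},
\end{equation*}
integrated against a density in $\alpha\ge0$ that encodes $a$. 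The analytic meaning of $\arccosh^2$ for negative $y$ is supplied by Statement~\ref{stmArccoshDef}. The factor $e^{-2\lambda/(t\sigma^2)}$ from the centring $-\tfrac1t$ should appear explicitly in this expression.

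\textbf{Step 3: Taylor bound.} Next I would apply the first inequality of Statement~\ref{stmArccoshTaylor_LDP}:
\begin{equation*}
\arccosh^2[\cosh\alpha+y]\ge \alpha^2+2y-C|y|(\alpha^2+|y|).
\end{equation*}
The linear term $2y$ should exactly cancel the centring factor $e^{-2\lambda/(t\sigma^2)}$. What remains is
\begin{equation*}
\exp\Big\{-\frac{\alpha^2}{2t\sigma^2}+\frac{C_1|\lambda|(\alpha^2+|\lambda|)}{2t\sigma^2}\Big\}.
\end{equation*}
Restricting to $|\lambda|\le C^{-1}$ keeps $y\in[-\tfrac12,\tfrac12]$, so the Statement applies, and also keeps the $\alpha$-Gaussian integrable. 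Collecting the prefactor $\sqrt2/(\sqrt{\pi t}\,\sigma)$ and the $k_2$-integral from Step~1 gives the claimed bound.

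\textbf{Main obstacle.} The hard step is Step~2: obtaining the exact arccosh representation of the one-observable generating function, including the correct identification of $y$ in terms of $\lambda$. The identification matters because it is what makes the $2y$ term cancel the centring exactly. I would try first to sum the series in $l$ termwise using the Mehler–Fock/Legendre-function representation of $\Gamma(\tfrac l2\pm ik_0\pm ik_1)$, and then perform the Fourier integral in $k_0$. The convergence needed to justify the interchanges comes from Proposition~\ref{prpObsExpMomentBound_corr}.
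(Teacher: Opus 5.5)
Your proposal is correct. At its core it is the paper's computation rearranged rather than a different argument. The paper does not pass through Lemma~\ref{lmmGlobalUniqFuncEqual}. Instead it:
\begin{itemize}
\item writes the exponential moment directly as the series from Theorem~\ref{thrMainCorrelations_corr};
\item sums over $l$ using the identity \eqref{eqPrpKeyArccoshExpansion} from \citep{LosevCorr}, which itself introduces a cosine variable $\alpha$ dual to $k_2$;
\item performs the Gaussian integral in $k_1$ and bounds $\cos(k_2\alpha)\le 1$;
\item applies Statement~\ref{stmArccoshTaylor_LDP}.
\end{itemize}

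In your version the bridge endpoint $a$ plays exactly the role of that $\alpha$. The closed form you are after in Step~2 follows from the same identity with $k=k_1$, $w=k_0$, $\beta=a$, $z=\lambda/2$, so no Mehler--Fock derivation is needed:
\[
\int \exp\Big\{\tfrac{2\lambda}{\sigma^2}\,\obs{\B_{\xi}}{0}{t}_0\Big\}\,\d\WS{\sigma^2}{a}{t}(\xi)
=\frac{1}{\sqrt{2\pi t}\,\sigma}\exp\Big\{-\frac{2}{t\sigma^2}\arccosh^2\big[\cosh(\tfrac{a}{2})-\tfrac{\lambda}{2}\big]\Big\}.
\]
Thus $y=-\lambda/2$, the argument is $\cosh(a/2)$, and the coefficient in the exponent is $2/(t\sigma^2)$. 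Your schematic normalisation is off, but with these constants the $2y$ term cancels the centring exactly as you predict. Writing $\int_{\R}\d a=2\int_0^\infty \d a$ then produces the prefactor $\sqrt{2}/(\sqrt{\pi t}\,\sigma)$, with $\alpha=a$.

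The detour buys a clean closed form for the bridge Laplace transform, at the cost of two justifications the paper avoids.
\begin{itemize}
\item $h_t(a,\sigma)$ changes sign in $a$. Removing the truncation of $F$ on the bridge side therefore needs dominated rather than monotone convergence. The $8/\sigma^2$ bound from Proposition~\ref{prpObsExpMomentBound_corr}, used as in the proof of Lemma~\ref{lmmGlobalUniqFuncEqual}, covers $|\lambda|\le 4$.
\item After Fubini in $(a,k_2)$, you must use positivity of the $a$-integrand to bound its cosine transform in $a$ by its integral. This is your counterpart of the paper's $\cos(k_2\alpha)\le 1$.
\end{itemize}
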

\begin{proof}
Using Proposition~\ref{prpMainPartFunct} and Theorem~\ref{thrMainCorrelations_corr} we can express exponential moments of $\obs{\phi}{0}{t}$, as
\begin{multline}
\int \exp\left\{\frac{2 \lambda}{\sigma^2}\, \obs{\phi}{0}{t} \right\}\d \FMeas{\sigma^2}(\phi) 
 = \int_{\R_+} \exp\left(-\frac{\sigma^2}{2}\cdot k_2^2\right)\sinh(2\pi k_2)\, 2k_2\d k_2 \\
 +
\sum_{l=1}^{\infty}\int_{\R_+^{2}} 
\frac{\Gamma\big(\frac{l}{2} \pm i k_1 \pm i k_2\big)}{2\pi^2\, \Gamma(l)}\cdot\frac{\lambda^l}{l!}\cdot
\exp\left(-\frac{t\sigma^2}{2}\cdot k_1^2 -\frac{\big(1-t\big)\sigma^2}{2}\cdot k_2^2\right)  
\\
\times
\sinh(2\pi k_1)\, 2 k_1 \sinh(2\pi k_2)\, 2 k_2 \d k_1 \d k_2,
\end{multline}
for $\lambda\in [-1, 1]$. Here the right-hand side converges absolutely, because it is dominated by the same expression with $\lambda=1$, which converges because all terms are positive and exponential moments of $\obs{\phi}{0}{t}$ exist (see Proposition~\ref{prpExpMomentCirc_corr}).

First, we give an upper bound for the integral over $k_1$.
It was proved in \citep[Appendix]{LosevCorr} that
\begin{multline}\label{eqPrpKeyArccoshExpansion}
\cos\Big(2k\cdot \arccosh\left[\cosh(\beta/2)-z\right]\Big)
=\\
\cos(k\beta)+
2k\sinh(2\pi k)
\int_0^{\infty}
\sum_{l=1}^{\infty} \frac{\Gamma\Big(\frac{l}{2}\pm i k \pm i w\Big)}{2\pi^2 \Gamma(l)}\cdot \frac{(2 z)^l}{l!} %
\cos(w\beta)\d w,
\end{multline}
where the right-hand side converges absolutely.
Thus,
\begin{multline}\label{eq_prp_bound_exp_moment_gener_func_LDP}
\int_{\R_+}\sum_{l=1}^{\infty}\frac{\Gamma\big(\frac{l}{2} \pm i k_1 \pm i k_2\big)}{2\pi^2\, \Gamma(l)}\cdot\frac{\lambda^l }{l!}\cdot \exp\left\{-\frac{t \sigma^2}{2}\cdot k^2_1\right\}\sinh(2\pi k_1)\, 2 k_1 \d k_1\\
=
\frac{2}{\pi}\int_{\R_+}\int_{\R_+} \exp\left\{-\frac{t\sigma^2}{2}\cdot k^2_1\right\}
\Bigg(\cos\Big(2k_1\, \arccosh\left[\cosh\left(\tfrac{\alpha}{2}\right)-\tfrac{\lambda}{2}\right]\Big)- \cos(k_1\alpha)\Bigg)
\cos\left(k_2 \alpha\right) \d \alpha \d k_1 .
\end{multline}
Notice that by Statement~\ref{stmArccoshTaylor_LDP}
\begin{equation}
\left|\cos\Big(2k_1\, \arccosh\left[\cosh\left(\tfrac{\alpha}{2}\right)-\tfrac{\lambda}{2}\right]\Big)- \cos(k_1\alpha)\right|
=
O\left(e^{10k_1-\alpha/10}\right),
\end{equation}
and so the integral in the right-hand side of \eqref{eq_prp_bound_exp_moment_gener_func_LDP} converges absolutely.
We calculate the integral in $k_1$ first,
\begin{equation}
\frac{2}{\pi}
\int_{\R_+}\int_{\R_+} \exp\left\{-\frac{t\sigma^2}{2}\cdot k^2_1\right\}
\cos(k_1\alpha)
\d k_1
\cos\left(k_2 \alpha\right) \d \alpha  =  \exp\left\{-\frac{t\sigma^2}{2}\cdot k^2_2\right\},
\end{equation}
and
\begin{multline}
\frac{2}{\pi}\int_{\R_+}\int_{\R_+} \exp\left\{-\frac{t\sigma^2}{2}\cdot k^2_1\right\}\cos\left(2k_1\, \arccosh\left[\cosh\left(\frac{\alpha}{2}\right)-\frac{\lambda}{2}\right]\right)
\d k_1 \cos\left(k_2 \alpha\right) \d \alpha\\
=
\frac{\sqrt{2}}{\sqrt{\pi t}\sigma}
\int_{\R_+}
\exp\left\{ -\frac{2}{t\sigma^2}
\arccosh^2\left[\cosh\left(\frac{\alpha}{2}\right)-\frac{\lambda}{2}\right]\right\}\cos\left(k_2 \alpha\right) \d \alpha\\
\leq
\frac{\sqrt{2}}{\sqrt{\pi t}\sigma}
\int_{\R_+}
\exp\left\{ -\frac{2}{t\sigma^2}
\arccosh^2\left[\cosh\left(\frac{\alpha}{2}\right)-\frac{\lambda}{2}\right]\right\}\d \alpha.
\end{multline}
Using Statement~\ref{stmArccoshTaylor_LDP}, we get that for some $C_1>0$
\begin{equation}
\int_{\R_+}
\exp\left\{ -\frac{2}{t\sigma^2}
\arccosh^2\left[\cosh\left(\frac{\alpha}{2}\right)-\frac{\lambda}{2}\right]\right\}\d \alpha
\leq
\int_{\R_+}
\exp\left\{ -\frac{\alpha^2}{2t\sigma^2}
+\frac{2\lambda}{t\sigma^2} + \frac{C_1|\lambda|(\alpha^2+ |\lambda|)}{2t\sigma^2}
\right\}\d \alpha.
\end{equation}
Therefore,
\begin{multline}
\exp\left\{-\frac{t\sigma^2}{2}\cdot k^2_2\right\}+
\int_{\R_+}\sum_{l=1}^{\infty}\frac{\Gamma\big(\frac{l}{2} \pm i k_1 \pm i k_2\big)}{2\pi^2\, \Gamma(l)}\cdot\frac{\lambda^l }{l!}\cdot \exp\left\{-\frac{t \sigma^2}{2}\cdot k^2_1\right\}\sinh(2\pi k_1)\, 2 k_1 \d k_1 \\
 \leq
 \frac{\sqrt{2}}{\sqrt{\pi t}\sigma}
 \int_{\R_+}
\exp\left\{ -\frac{\alpha^2}{2t\sigma^2}
+\frac{2\lambda}{t\sigma^2} + \frac{C_1|\lambda|(\alpha^2+ |\lambda|)}{2t\sigma^2}
\right\}\d \alpha.
\end{multline}

Hence, for $\lambda$ with $|\lambda|<\min\{C_1^{-1}/2, 1\}$, 
\begin{multline}
\int \exp\left\{\frac{2\lambda}{\sigma^2} \cdot
\obs{\phi}{0}{t} 
\right\}\d \FMeas{\sigma^2}(\phi) \\
\leq
\frac{\sqrt{2}}{\sqrt{\pi t}\sigma}
\int_{\R_+^2}
\exp\left\{ -\frac{\alpha^2}{2t\sigma^2}
+\frac{2\lambda}{t\sigma^2} 
+ \frac{C_1|\lambda|(\alpha^2+ |\lambda|)}{2t\sigma^2}
\right\}
\exp\left(-\frac{(1-t)\sigma^2}{2}\cdot k_2^2\right)
\sinh(2\pi k_2)\, 2k_2 \d k_2
\d \alpha.
\end{multline}
which finishes the proof.

\end{proof}

\bibliography{Schwarzian}
\end{document}